\newcommand{\Z}{\ensuremath{\mathbb{Z}}}
\newcommand{\Q}{\ensuremath{\mathbb{Q}}}
\newcommand{\R}{\ensuremath{\mathbb{R}}}
\newcommand{\C}{\ensuremath{\mathbb{C}}}
\newcommand{\Tr}{\ensuremath{\mathrm{tr}\,}}
\newcommand{\dd}{\ensuremath{\,\mathrm{d}}}
\newcommand{\angles}[1]{\ensuremath{\langle #1 \rangle}}
\newcommand{\mes}{\ensuremath{\mathrm{mes}}}
\newcommand{\identity}{\ensuremath{\mathrm{id}}}
\newcommand{\Hom}{\ensuremath{\mathrm{Hom}}}
\newcommand{\rightiso}{\ensuremath{\stackrel{\sim}{\rightarrow}}}
\newcommand{\Ker}{\ensuremath{\mathrm{Ker}\,}}
\newcommand{\Ad}{\ensuremath{\mathrm{Ad}\,}}
\newcommand{\ad}{\ensuremath{\mathrm{ad}\,}}
\newcommand{\Gm}{\ensuremath{\mathbb{G}_\mathrm{m}}}
\newcommand{\GL}{\ensuremath{\mathrm{GL}}}
\newcommand{\SO}{\ensuremath{\mathrm{SO}}}
\newcommand{\Spin}{\ensuremath{\mathrm{Spin}}}
\newcommand{\U}{\ensuremath{\mathrm{U}}}
\newcommand{\gl}{\ensuremath{\mathfrak{gl}}}
\newcommand{\so}{\ensuremath{\mathfrak{so}}}
\newcommand{\syp}{\ensuremath{\mathfrak{sp}}}
\newcommand{\Sp}{\ensuremath{\mathrm{Sp}}}
\newcommand{\Mp}{\ensuremath{\widetilde{\mathrm{Sp}}}}
\newcommand{\Lgrp}[1]{\ensuremath{{}^{\mathrm{L}} #1}}
\theoremstyle{plain}
\newtheorem{proposition}{Proposition}[subsection]
\newtheorem{lemma}[proposition]{Lemme}
\newtheorem{theorem}[proposition]{Théorème}
\newtheorem{corollary}[proposition]{Corollaire}
\theoremstyle{definition}
\newtheorem{definition}[proposition]{Définition}
\newtheorem{definition-theorem}[proposition]{Définition-Théorème}
\newtheorem{definition-proposition}[proposition]{Définition-Proposition}
\newtheorem{example}[proposition]{Exemple}
\newtheorem{remark}[proposition]{Remarque}
\newtheorem{conjecture}[proposition]{Conjecture}
\newcommand{\bmu}{\ensuremath{\bbmu}}
\newcommand{\noyau}{\ensuremath{\boldsymbol{\varepsilon}}} 
\newcommand{\Css}{\ensuremath{\mathscr{C}_\text{ss}}} 
\newcommand{\Cssgeo}{\ensuremath{\mathscr{C}_\text{ss}^\text{géo}}} 
\newcommand{\rev}{\ensuremath{\mathbf{p}}} 
\newcommand{\asp}{\ensuremath{\dashrule[.7ex]{2 2 2 2}{.4}}} 
\newcommand{\dualmeta}[1]{\ensuremath{\widehat{\widetilde{#1}}}} 
\newcommand{\Gred}{\ensuremath{0}}  
\newcommand{\Endo}{\ensuremath{!}}  
\title{Le lemme fondamental pondéré pour le groupe métaplectique}
\author{Wen-Wei Li}
\date{}
\begin{document}

\maketitle

\begin{abstract}
  Dans cet article, on énonce une variante du lemme fondamental pondéré d'Arthur pour le groupe métaplectique de Weil, qui sera un ingrédient indispensable de la stabilisation de la formule des traces. Pour un corps de caractéristique résiduelle suffisamment grande, on en donne une démonstration à l'aide de la méthode de descente, qui est conditionnelle: on admet le lemme fondamental pondéré non standard sur les algèbres de Lie. Vu les travaux de Chaudouard et Laumon, on s'attend à ce que cette condition soit ultérieurement vérifiée.
\end{abstract}

\tableofcontents

\section{Introduction}
Cet article s'inscrit dans un programme consistant à stabiliser la formule des traces d'Arthur-Selberg pour le groupe métaplectique de Weil, qui est un revêtement non algébrique $\rev: \Mp(W) \to \Sp(W)$ du groupe symplectique d'un espace symplectique $(W,\angles{|})$. Le formalisme de l'endoscopie elliptique est déjà adapté à ce revêtement dans \cite{Li09}, et les conjectures à la Langlands-Shelstad, à savoir le transfert et le lemme fondamental pour les unités, sont aussi prouvées. Néanmoins, pour stabiliser toute la formule des traces, Arthur \cite{Ar02} a aussi besoin d'une généralisation sophistiquée du lemme fondamental en presque toute place non archimédienne, dite le lemme fondamental pondéré. L'objectif de cet article est de formuler puis prouver une variante du lemme fondamental pondéré pour les groupes métaplectiques. La preuve est conditionnelle lorsque $\dim W > 2$: on admet le lemme fondamental pondéré non standard sur les algèbres de Lie \ref{prop:LF-nonstandard}.

Fixons un corps local $F$ de caractéristique nulle et un caractère unitaire non trivial $\psi: F \to \C^\times$. Soient $G=\Sp(W)$ et $\rev: \tilde{G} = \Mp(W) \to G(F)$ le revêtement métaplectique déterminé par $\psi$. Tout d'abord, il faut étudier les sous-groupes de Lévi de $\tilde{G}$, ou plus précisément les fibres de $\rev$ au-dessus des sous-groupes de Lévi de $G$. Dans \cite{Li09}, on a choisi de travailler avec les revêtements métaplectiques tels que $\Ker(\rev) = \bmu_8 := \{\noyau \in \C^\times : \noyau^8=1\}$. Grâce à ce choix, les Lévi ont une forme très simple comme suit: $\tilde{M} = \prod_{i \in I} \GL(n_i) \times \Mp(W^\flat)$, et cela introduit une structure de récurrence pour l'étude des groupes métaplectiques. De tels revêtements $\rev: \tilde{M} \to M(F)$ s'appellent les groupes de type métaplectique. On définit les données endoscopiques elliptiques de $\tilde{M}$ par composantes, de la façon évidente: en la composante $\Mp(W^\flat)$, on l'a déjà défini en \cite{Li09}; en les composantes $\GL(n_i)$ elles sont tautologiques. Ensuite, on peut définir les données endoscopiques pour $\tilde{G}$ comme les données endoscopiques elliptiques d'un Lévi, pour l'essentiel (voir \ref{prop:endo-Levi}), comme dans le cas de groupes réductifs.

En fait, on définira les données endoscopiques de $\tilde{G}$ en termes du groupe dual $\dualmeta{G} := \Sp(2n,\C)$ muni de l'action galoisienne triviale, où $2n=\dim_F W$, à l'instar de Langlands-Shelstad \cite{LS1}. Or le rôle du centre $Z_{\dualmeta{G}}$, qui fournit des symétries de données endoscopiques dans le cas de groupes réductifs, est remplacé par $Z_{\dualmeta{G}}^\Gred := \{1\}$. On conserve ce symbole non trivial pour $\{1\}$ afin de signaler l'analogie et d'indiquer la généralisation aux groupes de type métaplectique: si $\tilde{M} = \prod_{i \in I} \GL(n_i) \times \Mp(W^\flat)$, $2m = \dim_F W^\flat$, alors on pose
\begin{align*}
  \dualmeta{M} & := \prod_{i \in I} \GL(n_i,\C) \times \Sp(2m,\C), \\
  Z_{\dualmeta{M}}^\Gred & = \prod_{i \in I} \C^\times \times \{1\}.
\end{align*}

Maintenant, supposons que $F$ est non archimédien de caractéristique résiduelle $p$ suffisamment grande par rapport à $G$, et que $\psi$ est de conducteur $\mathfrak{o}_F$. Pour $\tilde{G}$, $\tilde{M}$ comme ci-dessus, fixons une donnée endoscopique elliptique $s_0$ pour $\tilde{M}$, qui donne un groupe endoscopique $M^\Endo$ et une correspondance de classes de conjugaison géométriques semi-simples. On sait aussi définir le facteur de transfert $\Delta$ dans ce cadre. Soit $K$ un sous-groupe hyperspécial de $G(F)$ associé à un réseau autodual dans $(W,\angles{|})$ en bonne position relativement à $M$, alors $\Delta$ et $K$ sont adaptés au sens de \cite{Li09} 5.15. On sait définir les intégrales orbitales pondérées non ramifiées $r^{\tilde{G}}_{\tilde{M},K}(\cdot)$ en les éléments réguliers de $\tilde{G}$. L'intégrale orbitale pondérée endoscopique est définie de façon habituelle

$$ r^{\tilde{G}}_{M^\Endo,K}(\gamma) := \sum_{\delta \in M(F)/\text{conj}} \Delta(\gamma, \tilde{\delta}) r^{\tilde{G}}_{\tilde{M},K}(\tilde{\delta}), \qquad \gamma \in M^\Endo_{G-\text{reg}}(F). $$

Suivant Arthur, on définit un ensemble fini $\mathcal{E}_{M^\Endo}(\tilde{G})$ qui indexe des données endoscopiques elliptiques pour $\tilde{G}$ ``couvrant'' $s_0$, avec des multiplicités. Soit $s \in \mathcal{E}_{M^\Endo}(\tilde{G})$, on note le groupe endoscopique par $G[s]$. On récapitule la situation par le diagramme
$$\xymatrix{
  G[s] \ar@{--}[rr]^{\text{endo.ell.}}_{s} & & \tilde{G} \\
  M^\Endo \ar@{--}[rr]^{\text{endo.ell.}}_{s_0} \ar@{^{(}->}[u]^{\text{Lévi}} & & \tilde{M} \ar@{^{(}->}[u]_{\text{Lévi}}
}$$
où les plongements de sous-groupes de Lévi sont uniques à conjugaison près.

C'est une partie du lemme fondamental pondéré pour les groupes réductifs connexes, prouvé par Chaudouard et Laumon \cite{CL09-1,CL09-2} en étendant la méthode de Ngô, que l'on peut définir les ``fonctions stabilisées'' $s^{G[s]}_{M^\Endo}: M^\Endo_{G[s]-\text{reg}}(F) \to \C^\times$, qui généralisent les intégrales orbitales stables de la fonction caractéristique d'un hyperspécial. En reprenant le formalisme d'Arthur \cite{Ar02}, le lemme fondamental pondéré métaplectique \ref{prop:LF-pondere} est l'égalité

$$ r^{\tilde{G}}_{M^\Endo,K}(\gamma) = \sum_{s \in \mathcal{E}_{M^\Endo}(\tilde{G})} i_{M^\Endo}(\tilde{G}, G[s]) s^{G[s]}_{M^\Endo}(\gamma[s]), $$
où
\begin{itemize}
  \item $i_{M^\Endo}(\tilde{G}, G[s])$ sont des coefficients définis dans \eqref{eqn:coef-meta};
  \item $\gamma[s] := \gamma \cdot z[s]$, où $z[s]$ est un élément d'ordre deux et central dans $M^\Endo(F)$ défini dans \ref{def:torsion}.
\end{itemize}

L'apparition de la ``torsion'' $\gamma \mapsto \gamma[s]$ est plus curieuse. On peut penser qu'elle reflète la différence entre la correspondance de classes par ``Lévi d'un groupe endoscopique'' et celle par ``groupe endoscopique d'un Lévi'' (voir \ref{prop:torsion-correspondance}). D'ailleurs, la démonstration ne marche pas sans cette torsion car elle rend des commutants corrects dans la procédure de descente.

\paragraph{Méthodologie}
L'idée de base est la méthode de descente de Harish-Chandra: on prouve l'égalité cherchée pour $\gamma$ au voisinage d'un élément semi-simple $\epsilon \in M^\Endo(F)$ tel que $M^\Endo_\epsilon$ est quasi-déployé. La méthode est modelée sur la démonstration du lemme fondamental pondéré tordu par Waldspurger \cite{Wa09}. Ainsi, on transforme $r^{\tilde{G}}_{M^\Endo,K}(\gamma)$ en une combinaison linéaire des intégrales orbitales pondérées endoscopiques sur les algèbres de Lie. L'autre côté est transformé en une combinaison linéaire des fonctions stabilisées sur les algèbres de Lie. On compare les deux à l'aide de
\begin{itemize}
  \item le lemme fondamental pondéré sur les algèbres de Lie, ce qui est prouvé par Chaudouard et Laumon \cite{CL09-1,CL09-2} dans le cas de caractéristique positive, auquel le cas de caractéristique nulle se réduit d'après \cite{Wa09-IF}.
  \item le lemme fondamental pondéré non standard, qui reste conjectural à l'heure où cet article est écrit; or on s'attend à ce que la méthode de Chaudouard et Laumon s'y applique également.
\end{itemize}

Deux autres ingrédients sont aussi cruciaux: l'identification des commutants et la descente du facteur de transfert. Heureusement les résultats dans \cite{Li09} sont encore applicables. Enfin, on se ramène à une comparaison des coefficients. Cela fait l'objet du yoga du \S\ref{sec:yoga-centres}, dont la preuve s'inspire de celle d'Arthur \cite{Ar99}.

Remarquons en passant que notre résultat n'est pas conditionnel si $\dim W=2$: le lemme fondamental pondéré non standard qui y intervient peut être vérifié à la main.

\paragraph{Organisation de cet article}
Après un court rappel de conventions et notations dans le \S\ref{sec:notations}, nous définirons les données endoscopiques dans le \S\ref{sec:donnees-meta}. Les cas qui nous occupent sont (1) $\tilde{G}$ un groupe métaplectique et $s$ une donnée endoscopique quelconque pour $\tilde{G}$; (2) $\tilde{M}$ de type métaplectique et $s_0$ une donnée endoscopique elliptique pour $\tilde{M}$. On peut aussi considérer le cas le plus général; on peut même déduire le transfert et le lemme fondamental non pondéré pour les données endoscopiques non elliptiques. Comme ceux-là ne sont pas les propos de cet article, tous sont laissés au lecteur.

Signalons aussi que le corps $F$ et le caractère $\psi$ n'interviennent pas dans les définitions de données endoscopiques et d'ellipticité. Ces notions ont donc un sens pour tout $F$ local de caractéristique nulle ou un corps de nombres. Les facteurs de transfert peuvent être définis pour tout $F$ local de caractéristique nulle.

Dans le \S\ref{sec:int-orb}, nous commençons à supposer que $F$ est non archimédien de caractéristique résiduelle suffisamment grande par rapport à $G$ et $\psi$ est de conducteur $\mathfrak{o}_F$. Le lemme fondamental pondéré y est énoncé. Là encore, c'est possible de le généraliser au cas de groupes de type métaplectique. Dans le \S\ref{sec:endoscopie}, nous rappellerons les définitions de l'endoscopie et le lemme fondamental pondéré (standard et non standard) sur les algèbres de Lie. Il y aura aussi des calculs du coefficient qui apparaissent dans le lemme fondamental pondéré non standard.

Dans le \S\ref{sec:descente-donnees}, nous étudierons la situation après la descente. Il faut identifier des commutants connexes à certains groupes endoscopiques, à l'opération \ref{def:bar} près qui remplace les facteurs $\SO$ impairs par $\Sp$. On l'a traité dans \cite{Li09}, mais ici la situation se complique à cause d'une construction d'Arthur.

L'argument dans \cite{Wa09} est repris dans le \S\ref{sec:descente-int}. Grâce à la descente et aux divers lemmes fondamentaux pondérés sur les algèbres de Lie, les deux côtés de l'égalité du lemme fondamental pondéré sont transformés en des combinaisons linéraires des fonctions stabilisées sur les algèbres de Lie. On en introduira un ensemble d'indices $E^\natural$. Dans le \S\ref{sec:comparaison}, il sera démontré que les coefficients pour les deux côtés coïncident comme fonctions définies sur $E^\natural$, d'où le lemme fondamental pondéré.

\paragraph{Remerciements}
J'ai l'agréable devoir de remercier Jean-Loup Waldspurger pour ses remarques très pertinentes et pour sa lecture attentive du manuscrit.

\section{Notations et conventions}\label{sec:notations}
\paragraph{Les groupes métaplectiques}
Soient $F$ un corps local de caractéristique nulle et $\psi: F \to \C^\times$ un caractère unitaire non trivial. Nous conservons les notations de \cite{Li09}. En particulier, soit $(W,\angles{|})$ un $F$-espace symplectique de dimension $2n$, notons $G := \Sp(W)$ le groupe symplectique et $\rev: \tilde{G} = \Mp(W) \to G(F)$ le revêtement métaplectique à huit feuillets, i.e. $\Ker(\rev) = \bmu_8 = \{\noyau \in \C^\times : \noyau^8=1 \}$. Si $M \subset G$ est un sous-groupe de Lévi, nous noterons $\tilde{M} := \rev^{-1}(M(F))$ et $\rev: \tilde{M} \to M(F)$ le revêtement ainsi induit. Toute construction des objets dits métaplectiques dans cet article dépendra du choix $\psi$.

Soit $n \in \Z_{\geq 0}$, le symbole $\SO(2n+1)$ désigne toujours le groupe orthogonal spécial impair déployé.

\paragraph{Groupes réductifs}
Soient $S$ un schéma et $M$ un $S$-schéma en groupes raisonnable (voir \cite{SGA3} Exposé $\mathrm{VI}_{\mathrm{B}}$ \S 3), on note $M^0$ sa composante neutre. Soient $F$ un corps et $M$ un $F$-groupe réductif connexe. Les normalisateurs (resp. commutants, commutants connexes) dans $M$ sont notés $N_M(\cdot)$ (resp. $Z_M(\cdot)$, $Z_M(\cdot)^0$). Le centre (resp. centre connexe) de $M$ est noté $Z_M$ (resp. $Z_M^0$). Si $m \in M(F)$, on écrit aussi $M^m := Z_M(m)$ et $M_m := Z_M(m)^0$. La classe de conjugaison de $m$ dans $M(F)$ est notée $\mathcal{O}^M(m)$. L'ensemble des classes de conjugaison géométriques semi-simples dans $M$ rencontrant $M(F)$ est noté $\Cssgeo(M(F))$.

Soit $T$ un $F$-tore, notons $X_*(T) := \Hom(\Gm, T)$; il est en dualité avec $X^*(T)$. Lorsqu'il y en a besoin d'indiquer le corps de base, on les notera $X_*(T)_F$ et $X^*(T)_F$.. Notons $X^*(M) := \Hom(M, \Gm)$ et $\mathfrak{a}_M := \Hom(X^*(M),\R)$. Il y en a une autre interprétation: notons $A_M$ le plus grand $F$-tore déployé dans $Z_M$, alors la restriction $X^*(M) \to X^*(A_M)$ induit un isomorphisme $X_*(A_M) \otimes_\Z \R \rightiso \mathfrak{a}_M$.

Soient $G$ un $F$-groupe réductif connexe et $M$ un sous-groupe de Lévi. L'ensemble des sous-groupes de Lévi de $G$ contenant $M$ est désigné par $\mathcal{L}^G(M)$. On désigne par $\mathcal{P}^G(M)$ l'ensemble des sous-groupes paraboliques de $G$ ayant $M$ comme composante de Lévi. On pose $W^G(M) := N_G(M)(F)/M(F)$.

On a la restriction $X^*(G) \to X^*(M)$ ainsi que l'inclusion $A_G \hookrightarrow A_M$. Ces deux applications induisent une suite exacte courte $W^G(M)$-équivariante, scindée canoniquement
$$ 0 \to \mathfrak{a}_G \to \mathfrak{a}_M \leftrightarrows \mathfrak{a}^G_M \to 0. $$

Le revêtement simplement connexe du groupe dérivé de $G$ est noté $\pi: G_\text{SC} \to G$; on note $M_\text{sc} := \pi^{-1}(M)$.

Soit $E$ une $F$-algèbre commutative de dimension finie. Soit $G$ un $E$-groupe. Par abus de notation, on omet la restriction des scalaires relativement à $E/F$ et on regarde $G$ comme un $F$-groupe.

\paragraph{Corps locaux, la décomposition de Jordan topologique}
Soit $F$ un corps local, le groupe de Galois absolu est noté $\Gamma_F$ et le groupe de Weil absolu est noté $W_F$. Si $F$ est non archimédien, le sous-groupe d'inertie de $W_F$ est noté $I_F$; on note $\mathfrak{o}_F$ l'anneau des entiers de $F$ et $\mathfrak{p}_F$ son idéal maximal.

Supposons $F$ non archimédien de caractéristique résiduelle $p$. Soit $G$ un $F$-groupe réductif connexe. On dira que $p$ est suffisamment grand par rapport à $G$ si la minoration \cite{Wa08} 4.4 (H1) est satisfaite et $p>2$. Dans ce cas-là, on peut définir les éléments topologiquement unipotents (resp. nilpotents) dans $G(F)$ (resp. dans $\mathfrak{g}(F)$). L'exponentielle définit un homéomorphisme de l'espace des éléments topologiquement nilpotents sur celui des éléments topologiquement unipotents (voir \cite{Wa08} 4.3 et appendice B).

Un élément $x \in G(F)$ est dit compact si le sous-groupe engendré par $x$ est d'adhérence compacte. Un tel élément $x \in G(F)$ admet une unique décomposition de Jordan $x = x_\text{tu} x_{p'} = x_{p'} x_\text{tu}$, où $x_{p'}$ est d'ordre fini premier à $p$ et $x_\text{tu}$ est topologiquement unipotent. Il existe un unique $X \in \mathfrak{g}(F)$, qui est topologiquement nilpotent, tel que $\exp(X)=x_\text{tu}$. De plus, $x_\text{tu}$ et $x_{p'}$ appartiennent à l'adhérence du sous-groupe engendré par $x$.

\paragraph{L-groupes}
Pour les groupes algébriques complexes, on confond systématiquement le schéma en groupe et la variété formée de ses $\C$-points. Une donnée de L-groupe pour un $F$-groupe réductif connexe $M$ signifie les données suivantes
\begin{itemize}
  \item un torseur intérieur $\phi: M \times_F \bar{F} \rightiso M^* \times_F \bar{F}$, où $M^*$ est un $F$-groupe réductif quasi-déployé;
  \item une paire de Borel $(T^*, B^*)$ de $M^*$ définie sur $F$;
  \item un $\C$-groupe réductif $\hat{M}$ muni d'une paire de Borel $(\hat{T},\hat{B})$;
  \item une action $\rho$ de $\Gamma_F$ sur $\hat{M}$ qui laisse $(\hat{T},\hat{B})$ invariante.
  \item un isomorphisme $\Gamma_F$-équivariant entre les données radicielles basées $\Psi(M^*,T^*,B^*)^\vee \rightiso \Psi(\hat{M},\hat{T},\hat{B})$, où $\Psi(\cdots)^\vee$ désigne le dual.
\end{itemize}
C'est possible de rigidifier certains de ces choix en fixant des $F$-épinglages; nous ne le faisons pas dans cet article. Il existe toujours une donnée de L-groupe pour $M$, ce que l'on fixe. On introduit ainsi le L-groupe $\Lgrp{M} := \hat{M} \rtimes W_F$.

Supposons maintenant que $M$ est un sous-groupe de Lévi de $G$. Fixons $P_0 \in \mathcal{P}^G(M)$. On dira que les données de L-groupes pour $G$ et $M$ sont compatibles si elles vérifient les conditions suivantes:
\begin{itemize}
  \item le torseur intérieur $\phi: G \to G^*$ se restreint en celui pour $M$, disons $\phi|_M: M \to M^*$, tel que $P_0^* := \phi(P_0)$ est défini sur $F$;
  \item notons $(T^*,(B^M)^*)$ la paire de Borel pour $M^*$, la paire de Borel pour $G^*$ est $(T^*, B^*)$ où $B^*$ est l'unique sous-groupe de Borel tel que $(B^M)^* \subset B^* \subset P_0^*$;
  \item $\hat{M} \subset \hat{G}$, les actions galoisiennes étant compatibles;
  \item notons $(\hat{T}, \hat{B}^M)$ la paire de Borel pour $\hat{M}$, la paire de Borel pour $\hat{G}$ est de la forme $(\hat{T},\hat{B})$;
  \item à $P_0^*$ est associé l'ensemble de ses racines simples $\Delta_{P_0^*}$ qui correspond par dualité à l'ensemble $\Delta_{\hat{P}_0}$, où $\hat{P}_0 \in \mathcal{P}^{\hat{G}}(\hat{M})$, tel que $\hat{B}^M \subset \hat{B} \subset \hat{P}_0$;
  \item les isomorphismes $\Psi(M^*,T^*,(B^M)^*)^\vee \rightiso \Psi(\hat{M},\hat{T},\hat{B}^M)$ et $\Psi(G^*, T^*, B^*)^\vee \rightiso \Psi(\hat{G}, \hat{T}, \hat{B})$ sont compatibles.
\end{itemize}

De tels choix sont possibles. Ces choix induisent une inclusion canonique $\Lgrp{M} \hookrightarrow \Lgrp{G}$. Cela permet aussi de définir une application injective $\mathcal{L}^G(M) \to \mathcal{L}^{\hat{G}}(\hat{M})$. Son image consiste des $\hat{L} \in \mathcal{L}^{\hat{G}}(\hat{M})$ tels qu'il existe $\hat{P} \in \mathcal{P}^{\hat{G}}(\hat{L})$ tels que $\hat{L}$ et $\hat{P}$ sont tous $\Gamma_F$-stables. Cf. \cite{Ar99} \S 1.

\section{Endoscopie métaplectique}\label{sec:donnees-meta}
Le corps local $F$ et le caractère $\psi: F \to \C^\times$ sont fixés dans cette section.

\subsection{Données endoscopiques}
Soit $(W,\angles{|})$ un $F$-espace symplectique de dimension $2n$, $n \in \Z_{\geq 0}$. Posons $G := \Sp(W)$. Un sous-groupe de Lévi $M$ correspond à des sous-espaces de $W$
$$ (\ell^i, \ell_i)_{i \in I}, W^\flat $$
où
\begin{itemize}
  \item $I$ est un ensemble fini;
  \item pour tout $i$, $(\ell^i \oplus \ell_i, \angles{|})$ est un $F$-espace symplectique dont $\ell^i$ et $\ell_i$ sont des lagrangiens;
  \item $(W^\flat,\angles{|})$ est un $F$-espace symplectique;
  \item on a une somme directe orthogonale $W = \bigoplus_{i \in I} (\ell^i \oplus \ell_i) \oplus W^\flat$.
\end{itemize}

Posons $n_i := \dim \ell_i$, alors
$$ M = \prod_{i \in I} \GL(n_i) \times \Sp(W^\flat). $$

Les sous-groupes de Lévi de $G$ sont ainsi paramétrés, à conjugaison par $G(F)$ près, par les données $(I, (n_i)_{i \in I})$ où
\begin{itemize}
  \item $I$ est un ensemble fini,
  \item $(n_i)_{i \in I} \in \Z_{\geq 1}^I$ à permutation près,
\end{itemize}
telles que $m := n - \sum_{i \in I} n_i \geq 0$.

Fixons un sous-groupe de Lévi $M$ associé à la suite de sous-espaces comme précédemment. Soit $\rev: \tilde{G} \to G(F)$ le revêtement métaplectique, alors (voir \cite{Li09} \S 5.4) $\rev: \tilde{M} \to M(F)$ est canoniquement isomorphe au revêtement
\begin{gather}\label{eqn:type-metaplectique}
  \tilde{M} = \prod_{i \in I} \GL(n_i) \times \Mp(W^\flat) \xrightarrow{(\identity, \rev)} \prod_{i \in I} \GL(n_i) \times \Sp(W^\flat),
\end{gather}
où la restriction de $\rev$ à la composante $\Mp(W^\flat)$ est encore notée par $\rev$. Remarquons que, tandis que le choix des espaces symplectiques $(W^\flat,\angles{|})$, $(W,\angles{|})$ n'affecte pas les groupes à isomorphisme près pourvu qu'ils aient les bonnes dimensions, il affecte les plongements $M \hookrightarrow G$ et $\tilde{M} \hookrightarrow \tilde{G}$. Par ailleurs, selon \cite{Li09}, les candidats de sous-groupes hyperspéciaux et de facteurs de transfert dépendent aussi de la forme symplectique. S'il n'y a pas de telles dépendances à craindre, on écrira $G=\Sp(2n)$ et $\tilde{G}=\Mp(2n)$, idem pour $M, \tilde{M}$.

\begin{definition}
  Les revêtements $\rev: \tilde{M} \to M(F)$ de la forme \eqref{eqn:type-metaplectique} sont dits de type métaplectique. Par abus de notation, on dit aussi que $\tilde{M}$ est un groupe de type métaplectique. Ici c'est sous-entendu que l'on a choisi $(W^\flat, \angles{|})$.
\end{definition}

Si $\rev: \tilde{L} \to L(F)$ est de type métaplectique et $M \subset L$ est un sous-groupe de Lévi, alors la restriction $\rev: \widetilde{M} \to M(F)$ est aussi de type métaplectique de façon évidente. On dit aussi que $\tilde{M}$ est un sous-groupe de Lévi de $\tilde{L}$. Les notions de distributions spécifiques et de fonctions anti-spécifiques (cf. \cite{Li09} \S 2.1) s'adaptent à ce cadre sans difficulté.

Rappelons que le dual de Langlands de $\GL(n)$ est le groupe complexe $\GL(n,\C)$ et celui de $\SO(2n+1)$ est $\Sp(2n,\C)$, tous munis de l'action galoisienne triviale. La définition ci-dessous reflète l'analogie entre $\Mp(W)$ et $\SO(2n+1)$.

\begin{definition}
  Soit $\tilde{M} = \prod_{i \in I} \GL(n_i) \times \Mp(W^\flat)$ un groupe de type métaplectique avec $\dim W^\flat = 2m$. Posons
  \begin{align*}
    \dualmeta{M} & := \prod_{i \in I} \GL(n_i,\C) \times \Sp(2m,\C), \\
    Z_{\dualmeta{M}}^\Gred & = \prod_{i \in I} \C^\times \times \{1\},
  \end{align*}
  où on identifie $\C^\times$ au centre de $\GL(n_i,\C)$ pour chaque $i \in I$. Il y a une bijection naturelle entre les classes de conjugaison des sous-groupes de Lévi de $\dualmeta{M}$ et celles de $M$. Munissons $\dualmeta{M}$ de l'action triviale de $\Gamma_F$.
\end{definition}

Donnons une définition \textit{ad hoc} des données endoscopiques de $\tilde{M}$; une interprétation plus naturelle sera donnée dans \ref{prop:endo-Levi}.
\begin{definition}
  Avec les notations précédentes, une donnée endoscopique de $\tilde{M}$ est une classe dans
  $$ \mathcal{E}(\tilde{M}) := Z_{\dualmeta{M}}^\Gred \backslash \{s \in \dualmeta{M} : s \text{ est semi-simple} \} / \text{conj}. $$

  Écrivons $s = ((s_i)_{i \in I}, s^\flat)$. Chaque $s_i$ détermine une donnée endoscopique de $\GL(n_i)$, à laquelle est associé le groupe endoscopique $M_i^\Endo$. Supposons que les valeurs propres de $s^\flat$ sont
  $$ \underbrace{a_1^{\pm 1}}_{k_1 \text{ fois}}, \ldots, \underbrace{a_r^{\pm 1}}_{k_r \text{ fois}}, \underbrace{+1}_{2m' \text{ fois}}, \underbrace{-1}_{2m'' \text{ fois}} $$
  où $a_1, \ldots, a_r \neq \pm 1$ et $a_i \neq a_j^{\pm 1}$ si $i \neq j$. Définissons le groupe endoscopique associé comme
  $$ M^\Endo := \prod_{i \in I} M_i^\Endo \times \prod_{j=1}^r \GL(k_j) \times \SO(2m'+1) \times \SO(2m''+1). $$
\end{definition}

\begin{remark}
  Si $\tilde{M} = \prod_{i \in I} \GL(n_i) \times \bmu_8$, i.e. s'il n'y a pas de revêtement, alors la définition ci-dessus se réduit à l'endoscopie pour le groupe réductif connexe $M = \prod_{i \in I} \GL(n_i)$. En général, on se ramène aussitôt à l'étude de l'endoscopie pour $\GL$ et de l'endoscopie pour $\Mp$.
\end{remark}

\begin{definition}
  On dit que $s \in \mathcal{E}(\tilde{M})$ est elliptique si $Z_{\dualmeta{M}}(s)$ (bien défini à conjugaison près) n'appartient à aucun sous-groupe de Lévi propre de $\dualmeta{M}$. L'ensemble des données endoscopiques elliptiques pour $\tilde{M}$ est noté $\mathcal{E}_\text{ell}(\tilde{M})$.
\end{definition}

Le résultat suivant est immédiat.
\begin{proposition}\label{prop:endo-ell}
  Soient $\tilde{M} = \prod_{i \in I} \GL(n_i) \times \Mp(W)$ et $s = ((s_i)_{i \in I}, s^\flat) \in \mathcal{E}(\tilde{M})$, alors $s$ est elliptique si et seulement si $s_i$ est central dans $\GL(n_i,\C)$ pour chaque $i \in I$ et $(s^\flat)^2 = 1$. Dans ce cas-là, on a
  \begin{gather}\label{eqn:nabla-desc-ell}
    M^\Endo = \prod_{i \in I} \GL(n_i) \times \SO(2m'+1) \times \SO(2m''+1), \quad m'+m''=m .
  \end{gather}

  Par conséquent, $\mathcal{E}_\text{ell}(\tilde{M})$ est en bijection avec $\{(m',m'') \in \Z_{\geq 0}^2 : m' + m'' = m \}$: la multiplicité de $1$ (resp. $-1$) dans les valeurs propres de $s^\flat$ est égale à $2m'$ (resp. $2m''$).
\end{proposition}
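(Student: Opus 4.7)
The plan is to reduce to the two basic factors of $\dualmeta{M} = \prod_{i \in I} \GL(n_i,\C) \times \Sp(2m,\C)$. Since Levi subgroups and centralizers of semisimple elements both decompose compatibly with this direct product, $Z_{\dualmeta{M}}(s)$ is contained in a proper Levi of $\dualmeta{M}$ if and only if, for some index, $Z_{\GL(n_i,\C)}(s_i)$ lies in a proper Levi of $\GL(n_i,\C)$, or else $Z_{\Sp(2m,\C)}(s^\flat)$ lies in a proper Levi of $\Sp(2m,\C)$. Note that the condition does not depend on the chosen representative of $s$, since multiplication by $Z_{\dualmeta{M}}^\Gred$ is by central elements which leave the centralizer unchanged.

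For $\GL(n_i,\C)$: the centralizer of a semisimple $s_i$ is $\prod_\lambda \GL(n_{i,\lambda},\C)$, indexed by the distinct eigenvalues $\lambda$ of $s_i$. This is itself a standard Levi, which is proper iff $s_i$ has at least two distinct eigenvalues, i.e.\ iff $s_i$ is not central in $\GL(n_i,\C)$.

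For $\Sp(2m,\C)$: grouping the eigenvalues of $s^\flat$ as reciprocal pairs $a_j^{\pm 1}$ (with $a_j \neq \pm 1$) of multiplicities $k_j$, together with eigenvalues $\pm 1$ of multiplicities $2m'$ and $2m''$ --- evenness coming from the nondegeneracy of the restriction of the symplectic form to each eigenspace --- one computes
\[ Z_{\Sp(2m,\C)}(s^\flat) \;=\; \prod_{j=1}^r \GL(k_j,\C) \times \Sp(2m',\C) \times \Sp(2m'',\C). \]
This centralizer sits inside the Levi $\prod_j \GL(k_j,\C) \times \Sp(2(m'+m''),\C)$ of $\Sp(2m,\C)$, and the latter is proper iff at least one $k_j \geq 1$, i.e.\ iff $(s^\flat)^2 \neq 1$. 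I expect the only nontrivial part of the argument to be the converse direction, for $(s^\flat)^2 = 1$: one must check that $\Sp(2m',\C) \times \Sp(2m'',\C)$ is not contained in any proper Levi of $\Sp(2m,\C)$. This follows from the identity $L = Z_{\Sp(2m,\C)}(Z(L)^0)$ valid for any Levi $L$: an inclusion $\Sp(2m',\C) \times \Sp(2m'',\C) \subset L$ would force $Z(L)^0 \subset Z(\Sp(2m',\C) \times \Sp(2m'',\C)) = \{\pm 1\}^2$, and since $Z(L)^0$ is a connected torus one would deduce $Z(L)^0 = \{1\}$, so $L$ is the whole group.

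Combining these two factor analyses yields the characterization of ellipticity; formula \eqref{eqn:nabla-desc-ell} is then read off from the general definition of $M^\Endo$, since each $M_i^\Endo = \GL(n_i)$ when $s_i$ is scalar, and no $\GL(k_j)$ factors occur when $r=0$. Finally, an involution in $\Sp(2m,\C)$ is determined up to conjugation by the pair $(2m',2m'')$ of even dimensions of its $\pm 1$-eigenspaces, and every such pair with $m'+m''=m$ is realized; this gives the announced bijection of $\mathcal{E}_\text{ell}(\tilde{M})$ with $\{(m',m'') \in \Z_{\geq 0}^2 : m'+m''=m\}$.
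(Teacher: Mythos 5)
The paper treats this proposition as immediate and supplies no proof, so you are filling in a routine argument rather than matching an existing one. Your plan — decompose into $\GL(n_i,\C)$ and $\Sp(2m,\C)$ factors, compute centralizers via eigenvalue decompositions, and decide properness of the smallest Levi containing them — is the natural one and leads to the right conclusion; the translation to \eqref{eqn:nabla-desc-ell} and the count of elliptic classes are also correct, including the observation that quotienting by $Z_{\dualmeta{M}}^\Gred$ kills the $\GL$-components.

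One logical slip worth correcting: from $H := \Sp(2m',\C) \times \Sp(2m'',\C) \subset L$ and $L = Z_{\Sp(2m,\C)}(Z(L)^0)$ you may only conclude $Z(L)^0 \subset Z_{\Sp(2m,\C)}(H)$, not $Z(L)^0 \subset Z(H)$ — there is no reason for $Z(L)^0$ to lie inside $H$. The argument still goes through because Schur's lemma applied to the two inequivalent irreducible summands $\C^{2m'}$ and $\C^{2m''}$ gives $Z_{\GL(2m,\C)}(H) = \C^\times \times \C^\times$ (scalars on each block), and intersecting with the symplectic group forces each scalar to square to $1$, so $Z_{\Sp(2m,\C)}(H) = \{\pm 1\}^2$ is finite; the connected torus $Z(L)^0$ is then trivial, whence $L = G$. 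You should also note explicitly that when $m'=0$ or $m''=0$ the group $H$ is all of $\Sp(2m,\C)$ and the assertion is vacuous, since your formula $Z(H) = \{\pm 1\}^2$ is literally false in that degenerate case.
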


\begin{remark}
  Pour le cas $\tilde{M} = \Mp(W)$, on se ramène au formalisme posé dans \cite{Li09}.
\end{remark}

\begin{proposition}\label{prop:endo-Levi}
  Soient $\tilde{L}$ un groupe de type métaplectique et $s \in \mathcal{E}(\tilde{L})$. Alors il existe un sous-groupe de Lévi $\tilde{M}$ et $s_M \in \dualmeta{M}$ tels que
  \begin{itemize}
    \item $s_M$ détermine une donnée endoscopique elliptique pour $\tilde{M}$ dont le groupe endoscopique $M^\Endo$ est isomorphe à $L^\Endo$;
    \item via l'inclusion $\dualmeta{M} \hookrightarrow \dualmeta{L}$, $s_M$ détermine la donnée endoscopique $s$ pour $\tilde{L}$.
  \end{itemize}
  Le Lévi $\tilde{M}$ est unique à conjugaison près. La donnée endoscopique elliptique pour $\tilde{M}$ déterminée par $s_M$ est unique. On en déduit une application surjective
  \begin{gather}\label{eqn:endo-Levi}
    \mathcal{E}(\tilde{L}) \to \bigsqcup_{M / \mathrm{conj}} \mathcal{E}_\mathrm{ell}(\tilde{M}).
  \end{gather}
\end{proposition}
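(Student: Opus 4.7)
The plan is to analyse $s \in \dualmeta{L}$ component by component via its eigenvalue decomposition. Write $s = ((s_i)_{i \in I}, s^\flat) \in \prod_i \GL(n_i,\C) \times \Sp(2m,\C)$. For each $i$, the class of $s_i$ is defined only up to scaling by the $i$-th factor $\C^\times$ of $Z_{\dualmeta{L}}^\Gred$, and its eigenspace decomposition yields a Levi $\prod_a \GL(m_{i,a},\C)$ of $\GL(n_i,\C)$ centralising $s_i$, with $\sum_a m_{i,a} = n_i$. For the symplectic component, the spectrum of $s^\flat$ is stable under $x \mapsto x^{-1}$: the eigenvalues $a_j^{\pm 1} \neq \pm 1$ pair up with common multiplicity $k_j$, while $\pm 1$ occur with even multiplicities $2m', 2m''$; the stabiliser of this refined decomposition inside $\Sp(2m,\C)$ is a Levi isomorphic to $\prod_j \GL(k_j,\C) \times \Sp(2(m'+m''),\C)$.

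By the bijection between conjugacy classes of Levis of $\dualmeta{L}$ and of $L$, these Levis correspond to a Levi $M \subset L$ of the form $\prod_i \prod_a \GL(m_{i,a}) \times \prod_j \GL(k_j) \times \Sp(W^{\flat,0})$ with $\dim W^{\flat,0} = 2(m'+m'')$. Let $\tilde{M}$ be the associated Levi of $\tilde{L}$, which is of metaplectic type, and define $s_M \in \dualmeta{M}$ by placing the distinct eigenvalues of $s_i$ in the centres of the $\GL(m_{i,a},\C)$ factors, the scalar $a_j$ in the centre of $\GL(k_j,\C)$, and the element with eigenvalues $\pm 1$ of multiplicities $2m', 2m''$ in $\Sp(2(m'+m''),\C)$. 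By construction the $\GL$-components of $s_M$ are central and its $\Sp$-component squares to $1$, so Proposition \ref{prop:endo-ell} furnishes both the ellipticity of $s_M$ and the identification of $M^\Endo$, computed from $s_M$ via \eqref{eqn:nabla-desc-ell}, with $L^\Endo$ as computed from $s$. Moreover the inclusion $\dualmeta{M} \hookrightarrow \dualmeta{L}$, possibly combined with a rescaling of each $\GL(n_i,\C)$-factor by an element of $Z_{\dualmeta{L}}^\Gred$, sends $s_M$ to a representative of the class of $s$.

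For uniqueness, the Levi $\tilde{M}$ can be characterised as the smallest Levi of $\tilde{L}$ whose dual meets the $Z_{\dualmeta{L}}^\Gred$-orbit of $s$ in an element with $\GL$-components central and $\Sp$-component squaring to one; this property is conjugation-invariant, so $\tilde{M}$ is well-defined up to conjugation. Once $\tilde{M}$ is fixed, the same eigenvalue data of $s$ determine the conjugacy class of $s_M$ in $\mathcal{E}_\mathrm{ell}(\tilde{M})$ modulo $Z_{\dualmeta{M}}^\Gred$. Surjectivity of \eqref{eqn:endo-Levi} is immediate: any pair $(\tilde{M}, s_M)$ produces a class $s \in \mathcal{E}(\tilde{L})$ via the inclusion $\dualmeta{M} \hookrightarrow \dualmeta{L}$ followed by the natural quotient. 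The only delicate point in the argument is the bookkeeping for the $Z_{\dualmeta{L}}^\Gred$-action, which is trivial on the $\Sp$-factor but rescales each $\GL(n_i,\C)$-component independently; otherwise the proof is essentially a repackaging of the Jordan classification of semisimple elements in $\GL$ and $\Sp$, and no serious obstacle is expected.
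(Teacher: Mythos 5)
The construction in the first two paragraphs — decompose each $s_i$ and $s^\flat$ by eigenvalues, collect the non-$\pm 1$ pairs into $\GL$-blocks and keep the $\pm 1$ eigenspaces in the $\Sp$-factor, then invoke Proposition \ref{prop:endo-ell} — follows the paper's approach and is correct. But the last paragraph contains two genuine gaps.

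First, the characterisation of $\tilde{M}$ as ``the \emph{smallest} Levi of $\tilde{L}$ whose dual meets the $Z_{\dualmeta{L}}^\Gred$-orbit of $s$ in an element with $\GL$-components central and $\Sp$-component squaring to one'' is wrong and is inconsistent with your own construction. Take $L=\Sp(4)$ and $s=1$. The conjugacy class of $s$ meets the maximal torus $\GL(1,\C)^2\subset\Sp(4,\C)$ in $1$, which trivially has central $\GL$-components and an $\Sp$-component squaring to $1$; so your characterisation returns the torus, whereas the correct Levi is $\dualmeta{L}$ itself (the datum $s=1$ is already elliptic, with $L^\Endo=\SO(5)$). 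The missing condition is that the scalars in the $\GL$-blocks that come from the $\Sp$-factor of $\dualmeta{L}$ must be $\neq\pm 1$: the $\pm 1$-eigenspaces are not allowed to be split off. Equivalently, $\dualmeta{M}$ must be the commutant in $\dualmeta{L}$ of the connected centre of $Z_{\dualmeta{L}}(s)$, which is the description the paper gives; ``smallest Levi meeting the orbit'' splits off too much.

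Second, surjectivity is not ``immediate'' by pushing a representative of $s_M$ forward to $\dualmeta{L}$. With the same example, if $M=\GL(1)\times\Sp(2)\subset\Sp(4)$ and $s_M=(1,1)$, the image in $\dualmeta{L}$ is $s=1$, whose associated Levi is $\dualmeta{L}$ and not $\dualmeta{M}$; thus $s$ does not map back to $(\tilde{M},s_M)$. You need to exploit that $s_M$ is only defined modulo $Z_{\dualmeta{M}}^\Gred$ and replace it by $s_M t$ with $t\in Z_{\dualmeta{M}}^\Gred$ in general position, so that the pushed-forward element has the correct eigenvalue shape (distinct non-$\pm 1$ $\GL$-scalars) relative to $\dualmeta{M}$. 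This generic translation is exactly what makes the argument close, and it must be stated.
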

\begin{proof}
  On se ramène aussitôt aux cas $L = \GL(n)$ ou $L = \Sp(2n)$. Il suffit de traiter le deuxième cas. Soit $s \in \mathcal{E}(\tilde{L})$; on en prend un représentant dans $\dualmeta{L}$, noté encore par $s$. Il existe un Lévi de $\dualmeta{L}$, noté $\dualmeta{M}$, tel que l'on peut écrire
  \begin{align*}
    \dualmeta{M} & = \prod_{i \in I} \GL(n_i,\C) \times \Sp(2m,\C), \\
    s & = ((s_i)_{i \in I}, s^\flat) \in \dualmeta{M},
  \end{align*}
  où
  \begin{itemize}
    \item les valeurs propres de $s^\flat \in \Sp(2m,\C)$ sont $\pm 1$;
    \item $s_i \in \C^\times = Z_{\GL(n_i,\C)}$ et $s_i \neq \pm1$ pour tout $i$;
    \item $s_i \neq s_j^{\pm 1}$ si $i \neq j$.
  \end{itemize}
  Alors $\dualmeta{M}$ est unique à conjugaison près. On prend $\tilde{M}$ un Lévi de $\tilde{L}$ dual de $\dualmeta{M}$ et on prend $s_M := s$. La donnée endoscopique pour $\tilde{M}$ déterminée par $s_M$ est elliptique et $M^\Endo = L^\Endo$ d'après \ref{prop:endo-ell}. Un tel élément dans $\mathcal{E}_\text{ell}(\tilde{M})$ est déterminé par la multiplicité de $+1$ (resp. $-1$) dans les valeurs propres de la composante en $\Sp(2m,\C)$ de $s_M$. Or c'est égal à la multiplicité de $1$ (resp. $-1$) dans les valeurs propres de $s$, d'où l'unicité.

  Montrons la surjectivité de \eqref{eqn:endo-Levi}. Fixons une donnée endoscopique dans $\mathcal{E}_\text{ell}(\tilde{M})$ déterminée par un élément $s_M \in \dualmeta{M}$. On peut prendre $s=s_M t$ où $t \in Z_{\dualmeta{M}}^\Gred$ est en position générale de sorte que $s$ vérifie les conditions précédentes relativement à $\tilde{M}$. La surjectivité s'ensuit.
\end{proof}

Remarquons que $\dualmeta{M}$ admet la description comme le commutant dans $\dualmeta{L}$ du centre connexe de $Z_{\dualmeta{L}}(s)$. La théorie que nous élaborerons ne dépend que de l'image de $s$ sous \eqref{eqn:endo-Levi}, pour l'essentiel.

Soient $\tilde{L},\tilde{M},M^\Endo$ comme ci-dessus. Comme dans l'endoscopie de groupes réductifs connexes, on a
$$ \mathfrak{a}_L \hookrightarrow \mathfrak{a}_M \rightiso \mathfrak{a}_{M^\Endo} = \mathfrak{a}_{L^\Endo}. $$
Une donnée endoscopique pour $\tilde{L}$ est elliptique si et seulement si $\mathfrak{a}_{L^\Endo} \rightiso \mathfrak{a}_L$ via ces applications. Parallèlement, on a des inclusions
$$ Z_{\dualmeta{L}}^\Gred \hookrightarrow Z_{\dualmeta{M}}^\Gred \hookrightarrow Z_{\widehat{M^\Endo}}^{\Gamma_F} = Z_{\widehat{L^\Endo}}^{\Gamma_F} . $$

\subsection{Correspondance des classes géométriques semi-simples}
Fixons $\tilde{L}$ un groupe de type métaplectique. Soient $s \in \mathcal{E}(\tilde{L})$ et $L^\Endo$ le groupe endoscopique associé. Notre but est de définir une application
$$ \mu: \Cssgeo(L^\Endo(F)) \to \Cssgeo(L(F)). $$

D'après \ref{prop:endo-Levi}, il existe un sous-groupe de Lévi $\tilde{M}$ tel que $s$ induit une donnée endoscopique elliptique de $\tilde{M}$ et $M^\Endo = L^\Endo$. On sait définir une application $\Cssgeo(M^\Endo(F)) \to \Cssgeo(M(F))$; en effet, selon \eqref{eqn:nabla-desc-ell}, l'endoscopie est tautologique en les composantes $\GL$ et c'est la situation considérée dans \cite{Li09} en la composante $\Sp$, pour laquelle une application $\mu$ des classes géométriques semi-simples est déjà définie. Rappelons-la brièvement.

Supposons momentanément que $\tilde{M} := \Mp(W^\flat)$, le groupe endoscopique elliptique de $\tilde{M}$ associé à la paire $(m',m'')$ est $M^\Endo = \SO(2m'+1) \times \SO(2m''+1)$. Soit $\gamma=(\gamma',\gamma'') \in M^\Endo(F)$ semi-simple ayant valeurs propres
$$ \underbrace{a'_1, \ldots, a'_{n'}, 1, (a'_{n'})^{-1}, \ldots, (a'_{1})^{-1}}_{\text{provenant de } \gamma'}, \underbrace{a''_1, \ldots, a''_{n''}, 1, (a''_{n''})^{-1}, \ldots, (a''_{1})^{-1}}_{\text{provenant de } \gamma''}. $$
On dit que $\delta \in M(F)$ correspond à $\gamma$ s'il est semi-simple avec valeurs propres
$$ a'_1, \ldots, a'_{n'}, (a'_{n'})^{-1}, \ldots, (a'_{1})^{-1}, -a''_1, \ldots, -a''_{n''}, -(a''_{n''})^{-1}, \ldots, -(a''_{1})^{-1}. $$
On en déduit une application $\Cssgeo(M^\Endo(F)) \to \Cssgeo(M(F))$. Le cas où $\tilde{M}$ est de type métaplectique s'ensuit.

Composons $\Cssgeo(M^\Endo(F)) \to \Cssgeo(M(F))$ avec l'application canonique $\Cssgeo(M(F)) \to \Cssgeo(L(F))$, on obtient $\mu$. On récapitule la situation par le diagramme suivant.
$$\xymatrix{
  & & \tilde{L} \\
  M^\Endo \ar@{--}[rr]^{\text{endo.ell.}} & & \tilde{M} \ar@{^{(}->}[u]_{\text{Lévi}}
}$$

On dit que $\gamma \in L^\Endo(F)_\text{ss}$ et $\delta \in L(F)_\text{ss}$ se correspondent si leurs classes géométriques se correspondent via $\mu$.

\begin{proposition}
  Supposons que $\delta \in L(F)_\text{ss}$ et $\gamma \in L^\Endo(F)_\text{ss}$ se correspondent.
  Si $\delta$ est régulier, alors $\delta$ et $\gamma$ sont tous fortement réguliers et on a
  $$ L^\Endo_\gamma \simeq L_\delta . $$
\end{proposition}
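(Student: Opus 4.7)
Mon plan comporte essentiellement trois étapes. La première est une réduction au cas elliptique via \ref{prop:endo-Levi}: la correspondance $\mu$ se factorise par un sous-groupe de Lévi $\tilde{M}$ de $\tilde{L}$ avec $M^\Endo = L^\Endo$ et $s_M$ elliptique dans $\mathcal{E}_{\mathrm{ell}}(\tilde{M})$, et l'on peut choisir un représentant $\delta \in M(F)$ de la classe correspondante. Puisque $\delta$ est régulier dans $L$, le tore maximal $L_\delta$ contient $A_M$, donc est contenu dans $Z_L(A_M) = M$, d'où $L_\delta = M_\delta$. Il suffit dès lors d'établir l'isomorphisme $M^\Endo_\gamma \simeq M_\delta$ dans le cadre elliptique.

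Dans la seconde étape, on décompose $\tilde{M} = \prod_{i \in I} \GL(n_i) \times \Mp(W^\flat)$ et $M^\Endo = \prod_{i \in I} \GL(n_i) \times \SO(2m'+1) \times \SO(2m''+1)$ selon \eqref{eqn:nabla-desc-ell}. La correspondance étant tautologique sur chaque facteur $\GL(n_i)$ (puisque $s_i$ y est central, forçant $M^\Endo_i = \GL(n_i)$), il n'y a rien à démontrer pour ces composantes; on est donc ramené au cas purement métaplectique $\tilde{M} = \Mp(W^\flat)$, $M^\Endo = \SO(2m'+1) \times \SO(2m''+1)$, essentiellement traité dans \cite{Li09}.

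La troisième étape est un calcul direct sur les valeurs propres. L'hypothèse de régularité de $\delta \in \Sp(W^\flat)(F)$, de valeurs propres $a'_i, (a'_i)^{-1}, -a''_j, -(a''_j)^{-1}$, impose via l'analyse standard des commutants dans $\Sp$: les $a'_i$ sont deux à deux distincts modulo inversion et différents de $\pm 1$, les $a''_j$ de même, et aucun $a'_i$ ne coïncide avec $-a''_j$ ni $-(a''_j)^{-1}$ --- sans quoi un facteur $\Sp(2)$ surgirait dans $L_\delta$, contredisant la régularité. On en déduit d'une part que $\delta$ est fortement régulier avec $L_\delta \simeq \Gm^m$, d'autre part que les valeurs propres $a'_i, 1, (a'_i)^{-1}$ de $\gamma'$ sont deux à deux distinctes avec $a'_i \neq \pm 1$, fournissant un commutant maximal torique $\simeq \Gm^{m'}$ dans $\SO(2m'+1)$; de même pour $\gamma''$. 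Ainsi $\gamma$ est fortement régulier, et l'appariement canonique des paires de valeurs propres $(a'_i, (a'_i)^{-1}) \leftrightarrow (a'_i, (a'_i)^{-1})$ et $(a''_j, (a''_j)^{-1}) \leftrightarrow (-a''_j, -(a''_j)^{-1})$ fournit un isomorphisme $L^\Endo_\gamma \rightiso L_\delta$. Le point à vérifier soigneusement --- seule véritable subtilité du plan --- est la $\Gamma_F$-équivariance de cet isomorphisme: elle découle de la rationalité de $\gamma$ et $\delta$ sur $F$, les multi-ensembles $\{a'_i\}$ et $\{a''_j\}$ étant $\Gamma_F$-stables et la multiplication par $-1$ commutant à l'action galoisienne.
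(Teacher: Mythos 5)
Votre preuve suit essentiellement la même stratégie que celle du texte: réduction au cas elliptique via le Lévi $\tilde{M}$ fourni par \ref{prop:endo-Levi} (l'argument $L_\delta = M_\delta$ étant le point que l'auteur laisse implicite), puis réduction au cas purement métaplectique, que le texte règle en renvoyant à \cite{Li09} et que vous explicitez par l'analyse des valeurs propres. La preuve est correcte; signalons seulement qu'il est plus rapide d'invoquer directement que pour $L$ produit de groupes $\GL$ et $\Sp$ (groupes à commutants de semi-simples connexes), régulier équivaut à fortement régulier, comme le fait le texte.
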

\begin{proof}
  Dans $L$, régulier implique fortement régulier et on se ramène au cas où $L^\Endo$ est un groupe endoscopique elliptique pour $\tilde{L}$. On se ramène ensuite au cas $L$ métaplectique qui est traité dans \cite{Li09}.
\end{proof}

\begin{remark}
  Jusqu'à maintenant, nos définitions ont peu à faire avec le corps $F$ et le revêtement n'y intervient pas. Donc on peut aussi définir les données endoscopiques, la notion d'ellipticité et la correspondance ci-dessus dans le cas où $F$ est un corps global. Nous ne l'utiliserons pas dans cet article.
\end{remark}

Supposons maintenant $\tilde{M}$ de type métaplectique, $s \in \mathcal{E}_\text{ell}(\tilde{M})$ et $M^\Endo$ est le groupe endoscopique elliptique associé. Nous allons définir le facteur de transfert. Écrivons
\begin{align*}
  \tilde{M} & = \prod_{i \in I} \GL(n_i) \times \Mp(W^\flat), \\
  M^\Endo & = \prod_{i \in I} \GL(n_i) \times \SO(2m'+1) \times \SO(2m''+1).
\end{align*}
Soient $\gamma \in M^\Endo(F)_\text{M-\text{reg}}$, $\delta \in M(F)_\text{reg}$ qui se correspondent. On isole les composantes dans $\GL(n_i)$ en les écrivant comme $\gamma = ((\gamma_i)_{i \in I}, \gamma', \gamma'')$ et $\delta = ((\delta_i)_{i \in I}, \delta^\flat)$. Alors $\gamma^\flat := (\gamma',\gamma'')$ et $\delta^\flat$ sont réguliers et ils se correspondent pour l'endoscopie elliptique associée à la paire $(m',m'')$. On applique la théorie de \cite{Li09}.

\begin{definition}\label{def:facteur-transfert}
  Soient $\delta$, $\gamma$ comme ci-dessus. Soit $\tilde{\delta} = ((\delta_i)_{i \in I}, \tilde{\delta}^\flat) \in \rev^{-1}(\delta)$, on définit le facteur de transfert par
  $$ \Delta(\gamma, \tilde{\delta}) = \Delta_{M^\Endo, \tilde{M}}(\gamma, \tilde{\delta}) := \Delta(\gamma^\flat, \tilde{\delta}^\flat). $$

  Si $\delta \in M(F)$ et $\gamma \in M^\Endo(F)$ ne se correspondent pas, on pose $\Delta(\gamma,\tilde{\delta})=0$ pour tout $\tilde{\delta}^\flat) \in \rev^{-1}(\delta)$.
\end{definition}

Ce facteur vérifie toutes les propriétés énumérées dans \cite{Li09} \S 1. En particulier, il ne dépend que de la classe de conjugaison géométrique de $\gamma$ et la classe de conjugaison de $\tilde{\delta}$; on a aussi $\Delta(\gamma, \noyau\tilde{\delta}) = \noyau\Delta(\gamma,\tilde{\delta})$ pour tout $\noyau \in \bmu_8$.

On dit que $\gamma \in M^\Endo(F)_\text{ss}$ est $L$-régulier s'il correspond à un élément $\delta \in M(F)$ qui est régulier dans $L(F)$. On note la sous-variété ouverte des éléments $L$-réguliers par $M^\Endo_{L-\text{reg}}$, c'est inclus dans $M^\Endo_{M-\text{reg}}$.

\begin{remark}\label{rem:facteur-transfert}
  On peut étendre \ref{def:facteur-transfert} au cas des données endoscopiques non elliptiques d'un groupe métaplectique. Soit $\gamma \in M^\Endo_{L-\text{reg}}(F)$. Notons $\Xi^M[\gamma]$ l'ensemble des classes de conjugaison dans $M(F)$ qui correspondent à $\gamma$, et $\Xi^L[\gamma]$ la variante pour $L$ au lieu de $M$. C'est bien connu (eg. \cite{Wa09} 5.4 (4)) que l'application naturelle $\Xi^M[\gamma] \to \Xi^L[\gamma]$ est bijective. Pour tout $\tilde{\delta} \in \tilde{G}$ tel que $\delta$ correspondant à $\gamma$, on peut choisir un conjugué $\delta_M \in M(F)$; alors $\tilde{\delta}$ est conjugué à un élément $\tilde{\delta}_M \in \tilde{M}$. Posons
  $$ \Delta(\gamma,\tilde{\delta}) := \Delta(\gamma,\tilde{\delta}_M) $$                                                                                                                                                                                                                                                                                                                                                                                                                                                                                                                                         
  en utilisant le cas elliptique \ref{def:facteur-transfert}. Montrons qu'il est bien défini. Soient $\tilde{\delta}_M^1$, $\tilde{\delta}_M^2$ deux choix comme ci-dessus. Il existe alors $x,y \in L(F)$ tels que $x \tilde{\delta} x^{-1} = \tilde{\delta}_M^1$, $y\tilde{\delta}y^{-1} = \tilde{\delta}_M^2$. On sait aussi qu'il existe $m \in M(F)$ tel que $m \delta_M^1 m^{-1} = \delta_M^2$. Donc l'action adjointe par $m^{-1} yx^{-1}$ préserve $\delta_M^1$. Rappelons que deux éléments dans un groupe de type métaplectique commutent si et seulement si leurs images par le revêtement commutent. Il en résulte que $m^{-1}yx^{-1}$ centralise $\tilde{\delta}_M^1$, d'où $m \tilde{\delta}_M^1 m^{-1} = \tilde{\delta}_M^2$ et
  $$ \Delta(\gamma,\tilde{\delta}_M^1) = \Delta(\gamma,\tilde{\delta}_M^2). $$
\end{remark}

\subsection{L'ensemble $\mathcal{E}_{M^\Endo}(\tilde{G})$}
Prenons désormais $\tilde{G} = \Mp(W)$ et $\tilde{M}$ un sous-groupe de Lévi de la forme $\tilde{M} = \prod_{i \in I} \GL(n_i) \times \Mp(W^\flat)$. Supposons choisis $P_0 \in \mathcal{P}(M)$, des paires de Borel  $(T,B^M)$ et $(T,B)$ définies sur $F$ pour $M$ et $G$, respectivement, telles que $B^M \subset B \subset P_0$ (cf. \S\ref{sec:notations}). En particulier, $M$ est un Lévi standard de $G$ pour ces choix.


Fixons toujours $s_0 \in \mathcal{E}_\text{ell}(\tilde{M})$ et le groupe endoscopique elliptique associé $M^\Endo$. On prend un représentant dans $\dualmeta{M}$ de la classe $s_0$ et on le note abusivement par le même symbole $s_0$. Écrivons $s_0 = ((s_{0,i})_{i \in I}, s_0^\flat)$ selon la décomposition $\dualmeta{M} = \prod_{i \in I} \GL(n_i,\C) \times \Sp(2m,\C)$.

\begin{definition}
  Posons
  $$ \mathcal{E}_{M^\Endo}(\tilde{G}) := \{s \in s_0 Z_{\dualmeta{M}}^\Gred / Z_{\dualmeta{G}}^\Gred  : (\text{la classe de } s) \in \mathcal{E}_\text{ell}(\tilde{G}) \} . $$
\end{definition}

C'est sous-entendu que cet ensemble dépend de $s_0$, non seulement du groupe $M^\Endo$. 

\begin{lemma}\label{prop:finitude-s}
  On a $|\mathcal{E}_{M^\Endo}(\tilde{G})| = 2^{|I|}$.
\end{lemma}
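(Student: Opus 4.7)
L'approche que j'adopterais est purement calculatoire: il s'agit de dérouler la définition de $\mathcal{E}_{M^\Endo}(\tilde{G})$ et d'énumérer les éléments satisfaisant la condition d'ellipticité.

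Je commencerais par expliciter un représentant de $s_0$. D'après \ref{prop:endo-ell} appliquée à $s_0 \in \mathcal{E}_\text{ell}(\tilde{M})$, on peut supposer que $s_{0,i} = c_i I_{n_i}$ est scalaire (avec $c_i \in \C^\times$) pour chaque $i \in I$, tandis que $(s_0^\flat)^2 = 1$. Puisque $Z_{\dualmeta{M}}^\Gred = \prod_{i \in I} \C^\times \times \{1\}$, les éléments de $s_0 Z_{\dualmeta{M}}^\Gred$ sont paramétrés par $(t_i)_{i \in I} \in (\C^\times)^I$, via $s = ((c_i t_i I_{n_i})_{i \in I}, s_0^\flat)$, la composante en $\Sp(2m,\C)$ restant égale à $s_0^\flat$.

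Puis, j'appliquerais \ref{prop:endo-ell} au groupe de type métaplectique $\tilde{G} = \Mp(W)$ (cas où $I = \emptyset$): la classe d'un tel $s$ appartient à $\mathcal{E}_\text{ell}(\tilde{G})$ si et seulement si $s^2 = 1$ dans $\dualmeta{G} = \Sp(2n,\C)$. Via le plongement canonique $\dualmeta{M} \hookrightarrow \dualmeta{G}$ (où les $\GL(n_i,\C)$ se réalisent comme facteurs de Lévi de Siegel), cette condition se scinde: la partie sur $s_0^\flat$ est acquise par hypothèse, et chaque facteur $\GL$ impose $(c_i t_i)^2 = 1$, soit exactement deux choix $t_i \in \{c_i^{-1}, -c_i^{-1}\}$.

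Enfin, puisque $Z_{\dualmeta{G}}^\Gred = \{1\}$, aucune identification ne se produit dans le quotient définissant $\mathcal{E}_{M^\Endo}(\tilde{G})$, et les $2^{|I|}$ choix obtenus donnent $2^{|I|}$ éléments deux à deux distincts, d'où $|\mathcal{E}_{M^\Endo}(\tilde{G})| = 2^{|I|}$. L'argument ne présente pas d'obstacle sérieux; la seule subtilité est la traduction de l'ellipticité pour $\tilde{G} = \Mp(W)$ en la condition $s^2 = 1$ dans $\dualmeta{G}$, que l'on obtient en spécialisant \ref{prop:endo-ell} au cas sans facteur $\GL$.
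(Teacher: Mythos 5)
Your proof is correct and follows essentially the same route as the paper's: write a representative $s_0$ with central $\GL$-components, parametrize the coset $s_0 Z_{\dualmeta{M}}^\Gred$ by $(t_i)_i \in (\C^\times)^I$, and apply \ref{prop:endo-ell} (for $\tilde G$, i.e.\ with $I=\emptyset$) to obtain the condition $s^2=1$, hence two admissible values per index $i$. The only cosmetic difference is that the paper first normalizes $s_{0,i}=1$ so the condition reads $s_i=\pm 1$ directly, while you keep the $c_i$ and solve $(c_i t_i)^2=1$; both observations that $Z_{\dualmeta G}^\Gred=\{1\}$ forces no further identification are the same.
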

\begin{proof}
  La donnée endoscopique étant elliptique, on a $s_{0,i} \in \C^\times$ pour tout $i \in I$. On peut prendre un représentant de $s_0$ tel que $s_{0,i}=1$ pour tout $i \in I$. Alors
  $$ \mathcal{E}_{M^\Endo}(\tilde{G}) = \{ ((s_i)_{i \in I}, s_0^\flat) \in \dualmeta{M} : s_i = \pm 1 \} $$
  d'après \ref{prop:endo-ell}, d'où l'assertion.
\end{proof}

Soit $s \in \mathcal{E}_{M^\Endo}(\tilde{G})$, il fournit un groupe endoscopique elliptique pour $\tilde{G}$, noté $G[s]$ dans ce contexte. Écrivons $s = ((s_i)_i, s_0^\flat)$ avec $s_i = \pm 1$ comme dans la preuve de \ref{prop:finitude-s} et posons
\begin{align*}
  I' & := \{ i \in I : s_i = +1 \}, \\
  I'' & := \{ i \in I : s_i = -1 \}, \\
  n' & := m' + \sum_{i \in I'} n_i, \\
  n'' & := m'' + \sum_{i \in I''} n_i .
\end{align*}
Alors $n'+n''=n$ et $s$ est la donnée endoscopique elliptique de $\tilde{G}$ associée à la paire $(n',n'')$. Donc on a $G[s] = \SO(2n'+1) \times \SO(2n''+1)$. D'autre part, on peut plonger $M^\Endo$ dans $G[s]$ de la façon suivante: $\prod_{i \in I'} \GL(n_i) \times \SO(2m'+1)$ (resp. $\prod_{i \in I''} \GL(n_i) \times \SO(2m''+1)$) se plonge dans $\SO(2n'+1)$ (resp. $\SO(2n''+1)$) comme un sous-groupe de Lévi. Ce plongement est unique à conjugaison près par $G[s](F)$. On récapitule la situation par le diagramme suivant.

$$\xymatrix{
  G[s] \ar@{--}[rr]^{\text{endo.ell.}} & & \tilde{G} \\
  M^\Endo \ar@{^{(}->}[u]^{\text{Lévi}} \ar@{--}[rr]^{\text{endo.ell.}} & & \tilde{M} \ar@{^{(}->}[u]_{\text{Lévi}}
}$$

Dans cette situation, le Lévi $M$ de $G$ correspond à la paire $(G[s],M^\Endo)$ au sens de \cite{Li09} \S 5.4. On peut regarder $M^\Endo$ de deux manières: un groupe endoscopique elliptique  du Lévi $\tilde{M}$ de $\tilde{G}$, ou un Lévi du groupe endoscopique elliptique $G[s]$ de $\tilde{G}$. Au contraire du cas des groupes réductifs, il y une différence comme suit.

\begin{definition}\label{def:torsion}
  Soit $s \in \mathcal{E}_{M^\Endo}(\tilde{G})$. Posons $z[s] := ((z_i)_{i \in I}, 1) \in Z_{M^\Endo}(F)$ où $z_i = +1$ (resp. $-1$) si $i \in I'$ (resp. si $i \in I''$). C'est contenu dans tout sous-groupe hyperspécial de $M^\Endo(F)$. Soit $\gamma \in M^\Endo(F)$ , posons $\gamma[s] := z[s] \gamma$. Signalons aussi que l'on peut translater une classe de conjugaison dans $M^\Endo(F)$ par l'élément central $z[s]$.

  Cette définition se généralise immédiatement au cas $\tilde{G}$ de type métaplectique et $s \in \mathcal{E}_{M^\Endo}(\tilde{G})$: les facteurs $\GL$ supplémentaires de $\tilde{G}$ n'y interviennent pas.

  Plus généralement, soit $s \in s_0 Z_{\dualmeta{M}}^\Gred/Z_{\dualmeta{G}}^\Gred$. Alors il existe un sous-groupe de Lévi $\tilde{L}$ de $\tilde{G}$ contenant $\tilde{M}$, tel que $s \in \mathcal{E}_{M^\Endo}(\tilde{L})$. On définit $z[s] \in Z_{M^\Endo}(F)$ et l'application $\gamma \mapsto \gamma[s]= z[s] \gamma$ sur $M^\Endo(F)$ par référence à $\tilde{L}$.
\end{definition}

\begin{proposition}\label{prop:torsion-correspondance}
  Notons $\mu_1 : \Cssgeo(M^\Endo(F)) \to \Cssgeo(G(F))$ le composé de $\Css(M^\Endo(F)) \to \Css(G[s](F))$ (induit par l'inclusion) avec $\mu: \Css(G[s](F)) \to \Css(G(F))$ (induit par l'endoscopie déterminée par $s$). Alors
  $$ \mu_1(\mathcal{O}) = \mu(z[s] \cdot \mathcal{O})$$
  pour tout $\mathcal{O} \in \Cssgeo(M^\Endo(F))$.
\end{proposition}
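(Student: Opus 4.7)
The plan is to verify the equality directly at the level of eigenvalue multisets, exploiting the explicit combinatorial description of the correspondence $\mu$ given just after Proposition \ref{prop:endo-ell}. First I would reduce to the case $\tilde{G} = \Mp(W)$: the generalisation to $\tilde{G}$ of type métaplectique adds only extra $\GL$ factors on which the endoscopic correspondence is tautological and the central element $z[s]$ acts trivially (by its very construction), so no new content appears. We may therefore assume $\tilde{G} = \Mp(W)$ and $\tilde{M} = \prod_{i\in I} \GL(n_i) \times \Mp(W^\flat)$, and fix a semisimple representative $\gamma = ((\gamma_i)_{i\in I}, \gamma', \gamma'') \in M^\Endo(F)$ of $\mathcal{O}$, together with the partition $I = I' \sqcup I''$ and the integers $n', n''$ determined by $s$.

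Next I would compute $\mu_1(\mathcal{O})$. The Levi inclusion $M^\Endo \hookrightarrow G[s] = \SO(2n'+1)\times\SO(2n''+1)$ puts each $\gamma_i$ with $i \in I'$ (resp.\ $i \in I''$) into $\SO(2n'+1)$ (resp.\ $\SO(2n''+1)$), producing eigenvalues of the shape $\{x_{i,j}, 1, x_{i,j}^{-1}\}_j$ on top of those of $\gamma'$ or $\gamma''$. Applying the endoscopic correspondence attached to $s$ then keeps the eigenvalues coming from $\SO(2n'+1)$ and negates those coming from $\SO(2n''+1)$, while dropping the spurious $\pm 1$'s to arrive at a class in $\Css(G(F))$ of dimension $2n$. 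I would record the resulting multiset explicitly.

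I would then compute $\mu(z[s]\cdot\mathcal{O})$ by running the other diagonal of the diagram. Multiplication by $z[s]$ leaves $(\gamma', \gamma'')$ and the $\gamma_i$ for $i\in I'$ unchanged, while replacing $\gamma_i$ by $-\gamma_i$ for $i \in I''$. The elliptic correspondence $M^\Endo \dashrightarrow M$ is the identity on every $\GL(n_i)$, and on the $\Mp$-component it sends $(\gamma',\gamma'')$ to an element of $\Sp(W^\flat)$ whose eigenvalues are described by the rule recalled just before Definition \ref{def:facteur-transfert}. Embedding $M \hookrightarrow G$ as a Levi then doubles up each $\GL$-eigenvalue by appending its inverse, giving eigenvalues $\{z_i x_{i,j}, z_i x_{i,j}^{-1}\}_j$ for every $i$ (using $z_i^{-1}=z_i$).

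Comparing the two multisets finishes the proof: the contributions coming from $(\gamma',\gamma'')$ agree by construction; for $i \in I'$ both sides contribute $\{x_{i,j}, x_{i,j}^{-1}\}_j$; and for $i \in I''$ both sides contribute $\{-x_{i,j}, -x_{i,j}^{-1}\}_j$ (from the $\SO(2n''+1)$-negation on the left, from the $z_i = -1$ scaling on the right). I do not expect a real obstacle: the element $z[s]$ is engineered precisely so that these two recipes coincide, which is exactly the conceptual content flagged in the introduction ("la différence entre \og Lévi d'un groupe endoscopique\fg{} et \og groupe endoscopique d'un Lévi\fg"). The only point requiring care is ensuring that the dimension-counting in the projections $\Css(G[s]) \to \Css(G)$ matches on both sides, which follows from $n' + n'' = n$ and the explicit recipe for $\mu$.
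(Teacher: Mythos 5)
Your proposal is correct and is essentially what the paper's one-line proof (``c'est clair d'après la définition de $\mu$'') leaves implicit: both sides of the identity are recipes for producing a multiset of eigenvalues in $\Sp(W)$, and once you trace through the two routes — via $M^\Endo\hookrightarrow G[s]\dashrightarrow G$ on one side, via $M^\Endo\dashrightarrow M\hookrightarrow G$ after translating by $z[s]$ on the other — the two multisets coincide precisely because $z_i=-1$ for $i\in I''$ mimics the sign flip that the $\SO(2n''+1)$-to-$\Sp$ correspondence performs. One small slip: the Levi embedding $\GL(n_i)\hookrightarrow\SO(2n'+1)$ contributes eigenvalue pairs $\{x_{i,j},x_{i,j}^{-1}\}$ only; it does not introduce an extra eigenvalue $1$ per $\GL$ block (the unique eigenvalue $1$ in $\SO(2n'+1)$ is already supplied by $\gamma'\in\SO(2m'+1)$). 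This does not affect the final comparison since the spurious $1$'s are dropped in the passage to $\Sp$, but it is worth stating correctly to keep the dimension count $2(\sum_{i\in I'}n_i)+(2m'+1)=2n'+1$ transparent.
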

\begin{proof}
  C'est clair d'après la définition de $\mu$.
\end{proof}

Notons qu'il y a une inclusion canonique $W^{G[s]}(M^\Endo) \hookrightarrow W^G(M)$. En effet, notons $\mathfrak{S}^\pm(I)$ le produit semi-direct $\mathfrak{S}(I) \ltimes (\Z/2\Z)^I$, où $\mathfrak{S}(I)$ est le groupe symétrique opérant sur $I$. Alors $W^G(M)$ s'identifie à $\mathfrak{S}^\pm(I)$, tandis que $W^{G[s]}(M^\Endo)$ s'identifie à $\mathfrak{S}^\pm(I') \times \mathfrak{S}^\pm(I'')$.

Le résultat suivant est alors immédiat.

\begin{proposition}
  L'isomorphisme $\mathfrak{a}_{M^\Endo} \rightiso \mathfrak{a}_M$ est équivariante par rapport à $W^{G[s]}(M^\Endo) \hookrightarrow W^G(M)$.
\end{proposition}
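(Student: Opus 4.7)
The plan is to reduce everything to an explicit description of the split central tori and of the two Weyl groups in terms of the indexing set $I$, and then to observe that the claim becomes tautological.

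First I would compute $\mathfrak{a}_M$ and $\mathfrak{a}_{M^\Endo}$ concretely. Writing $A_M$ for the maximal $F$-split torus in $Z_M$, the formula $M = \prod_{i \in I} \GL(n_i) \times \Sp(W^\flat)$ shows that $A_M = \prod_{i \in I} \Gm$, because $\Sp(W^\flat)$ has no non-trivial central split torus; hence $\mathfrak{a}_M = X_*(A_M) \otimes_\Z \R \simeq \R^I$ with the $i$-th coordinate given by the central character of $\GL(n_i)$. Likewise $M^\Endo = \prod_{i \in I} \GL(n_i) \times \SO(2m'+1) \times \SO(2m''+1)$ has $A_{M^\Endo} = \prod_{i \in I} \Gm$ (the $\SO(2k+1)$'s contribute trivially), so $\mathfrak{a}_{M^\Endo} \simeq \R^I$. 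The canonical isomorphism $\mathfrak{a}_{M^\Endo} \rightiso \mathfrak{a}_M$ coming from the endoscopic data (via the identification $Z_{\dualmeta{M}}^\Gred \hookrightarrow Z_{\widehat{M^\Endo}}^{\Gamma_F}$) identifies the two copies of $\R^I$ factor-by-factor, through the tautological bijection on the $\GL(n_i)$ components.

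Second, I would make both Weyl groups explicit. As already noted in the excerpt, $W^G(M) \simeq \mathfrak{S}^\pm(I) = \mathfrak{S}(I) \ltimes (\Z/2\Z)^I$, the permutations of the $\GL$-blocks $\ell^i \oplus \ell_i$ (among factors of equal rank) together with the involutions interchanging each pair $(\ell^i, \ell_i)$ — whose action on $\mathfrak{a}_M = \R^I$ is by permutation of coordinates and sign changes respectively. The analogous description for $G[s] = \SO(2n'+1) \times \SO(2n''+1)$ with its Levi $M^\Endo$ gives $W^{G[s]}(M^\Endo) \simeq \mathfrak{S}^\pm(I') \times \mathfrak{S}^\pm(I'')$, acting on $\mathfrak{a}_{M^\Endo} = \R^{I'} \times \R^{I''} = \R^I$ by the same formula restricted to each block $I'$, $I''$.

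Third, using the explicit embedding $M^\Endo \hookrightarrow G[s]$ described just before Definition~\ref{def:torsion} (the factors indexed by $I'$ land in the first $\SO$, those indexed by $I''$ in the second), the inclusion $W^{G[s]}(M^\Endo) \hookrightarrow W^G(M)$ becomes the obvious one $\mathfrak{S}^\pm(I') \times \mathfrak{S}^\pm(I'') \hookrightarrow \mathfrak{S}^\pm(I' \sqcup I'') = \mathfrak{S}^\pm(I)$. Since both sides act on the common $\R^I$ by permutations and sign flips through the same rule, the isomorphism $\mathfrak{a}_{M^\Endo} \rightiso \mathfrak{a}_M$ is tautologically equivariant, which is the claim.

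The only genuine point to check — and the main, if modest, obstacle — is that the embedding $W^{G[s]}(M^\Endo) \hookrightarrow W^G(M)$ really does correspond to the obvious block inclusion $\mathfrak{S}^\pm(I') \times \mathfrak{S}^\pm(I'') \hookrightarrow \mathfrak{S}^\pm(I)$ under the identifications above; this amounts to unraveling how an element of $N_{G[s]}(M^\Endo)(F)$ acts on the $\GL(n_i)$-blocks via the Levi embedding $M \hookrightarrow G$, which is immediate from the construction of the endoscopic correspondence once one has fixed compatible bases for the $\ell^i$, $\ell_i$.
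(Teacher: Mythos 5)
Your argument is correct and follows the same route the paper intends: the paper gives exactly the identifications $W^G(M)\simeq\mathfrak{S}^\pm(I)$ and $W^{G[s]}(M^\Endo)\simeq\mathfrak{S}^\pm(I')\times\mathfrak{S}^\pm(I'')$ just before the statement and then declares the result ``immédiat.'' You have simply spelled out the underlying $\R^I$ coordinates on $\mathfrak{a}_M$ and $\mathfrak{a}_{M^\Endo}$ and the block inclusion of Weyl groups that the paper leaves implicit.
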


\begin{proposition}
  Désignons par $\Delta_{M^\Endo, \tilde{M}}$ et $\Delta_{G[s],\tilde{G}}$ les facteurs de transfert associés aux données $s_0$ et $s$, respectivement. Soient $\gamma \in M^\Endo(F)_{G-\mathrm{reg}}$ et $\delta \in M(F)_{G-\mathrm{reg}}$ qui se correspondent. Pour tout $\tilde{\delta} \in \rev^{-1}(\delta)$, on a
  $$ \Delta_{M^\Endo, \tilde{M}}(\gamma, \tilde{\delta}) = \Delta_{G[s],\tilde{G}}(\gamma[s], \tilde{\delta}). $$
\end{proposition}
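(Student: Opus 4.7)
Le plan consiste à dévisser les deux membres à l'aide de \ref{def:facteur-transfert} puis à invoquer la descente du facteur de transfert métaplectique établie dans \cite{Li09}. D'une part, le membre de gauche vaut par définition $\Delta(\gamma^\flat, \tilde{\delta}^\flat)$, où $\gamma^\flat=(\gamma',\gamma'') \in \SO(2m'+1)\times\SO(2m''+1)$ et $\tilde{\delta}^\flat \in \Mp(W^\flat)$ sont les composantes hors $\GL$. D'autre part, comme $\tilde{G}=\Mp(W)$ ne possède pas de facteur $\GL$ et que $s \in \mathcal{E}_\mathrm{ell}(\tilde{G})$, le membre de droite $\Delta_{G[s],\tilde{G}}(\gamma[s],\tilde{\delta})$ est simplement le facteur de transfert métaplectique pour $(G[s],\tilde{G})$ évalué en $\gamma[s] \in M^\Endo(F) \subset G[s](F)$ et $\tilde{\delta} \in \tilde{M} \subset \tilde{G}$.

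L'enjeu est donc de montrer que ce dernier facteur se réduit à la contribution purement métaplectique. Je commencerais par examiner la correspondance des valeurs propres: un élément $\xi \in G[s]=\SO(2n'+1)\times\SO(2n''+1)$ correspond à un $\eta \in G(F)$ dont les valeurs propres s'obtiennent en gardant celles de la première composante et en multipliant par $-1$ celles de la seconde. Appliquée à $\xi=\gamma[s]$, cette recette donne précisément $\delta$: les signes $z_i=+1$ pour $i\in I'$ laissent inchangées les valeurs propres de $\gamma_i$, tandis que les signes $z_i=-1$ pour $i\in I''$ sont compensés par la multiplication par $-1$ imposée par la correspondance endoscopique dans le second facteur. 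Cela est essentiellement \ref{prop:torsion-correspondance}. Ensuite, en utilisant la forme explicite du facteur de transfert métaplectique dans \cite{Li09} \S 5 (un produit sur les contributions des valeurs propres de $\delta$), on factorise $\Delta_{G[s],\tilde{G}}(\gamma[s],\tilde{\delta})$ sur les composantes du Lévi $\tilde{M}$: pour chaque $i \in I$, la contribution $\GL(n_i)$ est triviale, puisque l'endoscopie pour $\GL$ est tautologique et que la torsion par $z[s]$ rend la correspondance $\gamma_i[s] \leftrightarrow \delta_i$ (dans l'un ou l'autre facteur $\SO$ de $G[s]$) formellement identique à la correspondance naturelle $\gamma_i \leftrightarrow \delta_i$; la contribution résiduelle est alors exactement $\Delta(\gamma^\flat,\tilde{\delta}^\flat)$, d'où l'égalité.

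L'obstacle principal sera de justifier avec précision cette factorisation de $\Delta_{G[s],\tilde{G}}$ selon la décomposition $\tilde{M} = \prod_{i\in I} \GL(n_i) \times \Mp(W^\flat)$, en s'appuyant sur les propriétés énumérées dans \cite{Li09} \S 5.15 (invariance par conjugaison, équivariance $\Delta(\gamma,\noyau\tilde{\delta}) = \noyau\Delta(\gamma,\tilde{\delta})$, et la formule multiplicative sur les valeurs propres). Cette étape rend manifeste le rôle crucial de la torsion $z[s]$: elle rétablit la cohérence entre la correspondance des classes via ``groupe endoscopique d'un Lévi'' (la donnée $s_0$) et celle via ``Lévi d'un groupe endoscopique'' (la donnée $s$), car sans elle les composantes $i\in I''$ introduiraient un décalage de signes qui empêcherait l'égalité recherchée.
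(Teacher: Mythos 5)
Your plan announces the right ingredient — parabolic descent of the transfer factor from \cite{Li09} — and you correctly identify the role of the twist via \ref{prop:torsion-correspondance}. But your execution replaces a one-line invocation of that descent with an attempt to re-derive it by explicit eigenvalue bookkeeping, and you acknowledge yourself that «l'obstacle principal» is precisely «justifier avec précision cette factorisation». That factorisation is exactly the content of the parabolic descent formula \cite{Li09} 5.18, which the paper simply applies at $(\gamma[s],\tilde{\delta})$ to obtain $\Delta_{G[s],\tilde{G}}(\gamma[s],\tilde{\delta}) = \Delta_{\SO(2m'+1)\times\SO(2m''+1),\Mp(W^\flat)}(\gamma^\flat,\tilde{\delta}^\flat)$; the right-hand side is then by \ref{def:facteur-transfert} exactly $\Delta_{M^\Endo,\tilde{M}}(\gamma,\tilde{\delta})$. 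So the argument closes in two sentences once you cite 5.18. In your proposal the descent lemma is never actually applied; instead you sketch a product-over-eigenvalues decomposition of $\Delta_{G[s],\tilde{G}}$ that is not established in the cited sources at the level of generality you need, and you point to \cite{Li09} \S 5.15 (which records invariance and equivariance properties, not a parabolic descent) rather than to the descent statement itself. The gap is therefore concrete: you need to invoke \cite{Li09} 5.18, and once you do, all the eigenvalue and $\GL$-factor analysis becomes unnecessary scaffolding. The point you correctly raise — that $z[s]$ reconciles the two ways of reading $M^\Endo$ — is already absorbed in the hypothesis that you descend at the point $\gamma[s]$ (not $\gamma$), which is why \ref{prop:torsion-correspondance} guarantees $\gamma[s]$ and $\delta$ correspond under $s$.
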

\begin{proof}
  Adoptons la notation dans \ref{def:facteur-transfert}. D'après la descente parabolique de $\Delta_{G[s],\tilde{G}}$ \cite{Li09} 5.18, appliquée en $(\gamma[s],\tilde{\delta}$), on a
  $$ \Delta_{G[s],\tilde{G}}(\gamma[s],\tilde{\delta}) = \Delta_{\SO(2m'+1)\times\SO(2m''+1),\Mp(W^\flat)}(\gamma^\flat, \tilde{\delta}^\flat). $$

  Or c'est exactement la définition de $\Delta_{M^\Endo,\tilde{M}}(\gamma, \tilde{\delta})$.
\end{proof}

\begin{remark}
  Cette propriété et \ref{prop:torsion-correspondance} permettent d'étendre le transfert (vrai pour tout corps local $F$ de caractéristique nulle) et le lemme fondamental non pondéré aux données endoscopiques non elliptiques de $\tilde{G}$; le facteur de transfert étant celui défini dans \ref{rem:facteur-transfert}. En effet, on le réduit de façon usuelle au transfert (resp. au lemme fondamental) elliptique suivi par une descente parabolique des intégrales orbitales. Les détails sont laissés au lecteur.
\end{remark}

\section{Intégrales orbitales pondérées endoscopiques et les fonctions stabilisées}\label{sec:int-orb}
Dans cette section, $F$ est une extension finie de $\Q_p$. Posons toujours $\tilde{G} := \Mp(W)$. Supposons que
\begin{itemize}
  \item $\psi|_{\mathfrak{o}_F}=1$, $\psi|_{\mathfrak{p}_F^{-1}}$ non trivial;
  \item $p$ est suffisamment grand par rapport à $G$.
\end{itemize}
Fixons aussi un Lévi $\tilde{M}$ de $\tilde{G}$ associé à la donnée de sous-espaces $((\ell_i, \ell^i)_{i \in I}, W^\flat)$, posons  $n_i := \dim \ell_i$ pour tout $i \in I$, alors $M$ est de la forme
$$ \tilde{M} = \prod_{i \in I} \GL(n_i) \times \Mp(W^\flat). $$
Posons $2m =  \dim_F W^\flat$.

\subsection{Intégrales orbitales pondérées non ramifiées anti-spécifiques}
Fixons un réseau autodual $\mathfrak{L} \subset W$ par rapport à $\angles{|}$ et posons $K := \mathrm{Stab}_{G(F)}(\mathfrak{L}) \subset G(F)$, c'est un sous-groupe hyperspécial de $G(F)$. Supposons que $\mathfrak{L}$ est en bonne position relativement à $((\ell_i, \ell^i)_{i \in I},W^\flat)$, c'est-à-dire
$$ \mathfrak{L} = \bigoplus_{i \in I} \left((\ell_i \cap \mathfrak{L}) \oplus (\ell^i \cap \mathfrak{L})\right) \oplus (W^\flat \cap \mathfrak{L}). $$

Cela entraîne que $K^M := K \cap M(F)$ est aussi hyperspécial dans $M(F)$. Écrivons $K^M = \prod_{i \in I} K_i^M \times K^\flat$. Le modèle latticiel de la représentation de Weil induit un scindage $K^\flat \hookrightarrow \Mp(W^\flat)$, d'où le scindage $K^M \hookrightarrow \tilde{M}$. Idem, $K \hookrightarrow \Mp(W)$. Bien entendu, il faut une compatibilité.

\begin{proposition}
  Le diagramme suivant est commutatif.
  $$\xymatrix{
    K \ar[r] & \tilde{G} \\
    K^M \ar[u] \ar[r] & \tilde{M} \ar[u]
  }$$
\end{proposition}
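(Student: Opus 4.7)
The plan is to unwind both splittings in terms of the Schrödinger--lattice model of the representation de Weil, and to conclude by the uniqueness property that characterises these splittings on the stabilisateurs de réseaux autoduaux.

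First, I would recall that the splitting $K \hookrightarrow \tilde{G}$ is defined via the lattice model attached to $\mathfrak{L}$: the représentation de Weil $\omega_\psi$ of $\Mp(W)$ admits a réalisation in which there is a distinguished vector (essentiellement la fonction caractéristique de $\mathfrak{L}/2\mathfrak{L}$), and the canonical lift of $k \in K$ to $\tilde{G}$ is caractérisé by the property that it fixes this distinguished vector. The splitting $K^\flat \hookrightarrow \Mp(W^\flat)$ is caractérisé by the same property relative to the lattice $\mathfrak{L}^\flat := W^\flat \cap \mathfrak{L}$, which is autodual in $W^\flat$ by the hypothèse de bonne position.

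Second, since $\mathfrak{L}$ decomposes orthogonally as $\bigoplus_{i \in I}(\mathfrak{L}_i \oplus \mathfrak{L}^i) \oplus \mathfrak{L}^\flat$ with $\mathfrak{L}_i := \ell_i \cap \mathfrak{L}$ and $\mathfrak{L}^i := \ell^i \cap \mathfrak{L}$, the lattice model for $(W,\mathfrak{L})$ is canoniquement isomorphic to the produit tensoriel externe of the lattice models of the factors, the distinguished vector on the left corresponding to the tensor of the distinguished vectors on the right. Any $k \in K^M$ preserves the decomposition symplectique of $W$ and therefore acts block-diagonally on this tensor factorisation. By the uniqueness of the lift that fixes the distinguished vector, the image of $k$ under $K \hookrightarrow \tilde{G}$ then coincides with the product of the images of $k$ under the splittings attached to each factor.

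Third, it remains to identify, facteur par facteur, this product of splittings with the construction $K^M \hookrightarrow \tilde{M}$ recalled in the statement. The $W^\flat$-factor gives exactly the splitting $K^\flat \hookrightarrow \Mp(W^\flat)$ by the first step. For each $\GL(n_i)$-factor, the splitting coming from the lattice model on $\GL(\mathfrak{L}_i)$ coincides, by construction, with the scindage canonique of the revêtement métaplectique au-dessus de $\GL(n_i) \subset \Sp(\ell_i \oplus \ell^i)$ used in \cite{Li09} \S 5.4 to write $\tilde{M}$ as a direct product as in \eqref{eqn:type-metaplectique}.

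The principal obstacle is the bookkeeping of normalisations à valeurs dans $\bmu_8$: all the splittings are a priori only defined modulo une racine huitième de l'unité, and it must be checked that the isomorphism $\tilde{M} \simeq \prod_{i \in I} \GL(n_i) \times \Mp(W^\flat)$ of \cite{Li09} \S 5.4 is precisely the one induced by the factorisation tensorielle of the lattice model above. Since both sides satisfy the same caractérisation in terms of the distinguished vector of the tensor-factorised lattice model, no ambiguïté de normalisation supplémentaire subsists, and the diagram commutes.
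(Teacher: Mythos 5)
Your approach via the tensor-factorisation of the lattice model is genuinely different from the paper's: the paper reduces immediately to $M = \GL(n)$, establishes compatibility on a maximal split torus by citing \cite{Li09}~2.13, notes that compatibility on unipotent subgroups is automatic, and extends to all of $M(F)$ via the Bruhat decomposition and density. Your factorisation idea is attractive, but there is a genuine gap precisely at the step you flag as ``the principal obstacle.''

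The problem is the identification, for each $i$, of the lattice-model splitting on $\GL(n_i,\mathfrak{o}_F)$ with the splitting implicit in the isomorphism $\tilde{M} \simeq \prod_i \GL(n_i) \times \Mp(W^\flat)$ of \cite{Li09}~\S 5.4. You assert this ``by construction,'' and then argue in your last paragraph that ``both sides satisfy the same caractérisation in terms of the distinguished vector of the tensor-factorised lattice model.'' But the trivialisation of the cover over $\prod_i \GL(n_i)$ in \cite{Li09}~\S 5.4 is not a priori defined by (or characterised in terms of) the lattice model; it comes from the structure theory of the Weil cocycle on Levi subgroups. Hence the second half of your sentence is not available as a hypothesis: you would still need to prove that these two independently constructed sections of the cover over $\GL(n_i,\mathfrak{o}_F)$ coincide. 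This coincidence is precisely the non-trivial content of the proposition once one reduces to the $\GL$ factors, and it is exactly what the paper's proof establishes --- on the torus by \cite{Li09}~2.13, then on the big cell by Bruhat, then everywhere by density. Without that verification (or an equivalent one), your argument shows only that the lattice-model splitting $K \hookrightarrow \tilde G$ restricts, factor by factor, to some splitting on each $K_i^M$, not that it restricts to the one used to define the source map $K^M \hookrightarrow \tilde M$.
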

\begin{proof}
  On se ramène aussitôt au cas $M = \GL(n)$. D'après \cite{Li09} 2.13, on a la compatibilité cherchée sur un $F$-tore maximal déployé dans $M$. D'autre part la compatibilité sur des sous-groupes unipotents est automatique. On en déduit la compatibilité sur la grosse cellule de $M$ par la décomposition de Bruhat, et le cas général en résulte par densité. 
\end{proof}

Prenons la mesure de Haar sur $G(F)$ de sorte que $\mes(K)=1$; cette mesure ne dépend pas du choix de $K$. Le scindage du revêtement $\rev$ au-dessus de $K$ permet de définir la fonction $f_K \in C_{c,\asp}^\infty(\tilde{G})$ telle que
$$ f_K(\tilde{x}) = \begin{cases} \noyau^{-1}, & \text{si } \tilde{x} \in \noyau K, \noyau \in \bmu_8 ; \\ 0, & \text{sinon}. \end{cases} $$

Fixons $s_0 \in \mathcal{E}_\text{ell}(\tilde{M})$, auquel est associé le groupe endoscopique
$$ M^\Endo = \prod_{i \in I} \GL(n_i) \times \SO(2m'+1) \times \SO(2m''+1). $$
tel que $m'+m'' = m$.

On sait définir la fonction poids d'Arthur $v_M: M(F) \backslash G(F)/K \to \R_{\geq 0}$ (voir \cite{Ar88LB}). Elle dépend du choix de $K$ et de la mesure de Haar sur $\mathfrak{a}_M$ induite du choix d'une forme quadratique définie positive sur $\mathfrak{a}_M$, invariante par $W^G(M)$, ce que l'on fixe dorénavant.

Soit $\delta \in G_\text{reg}(F)$. On normalise la mesure de Haar pour le tore $T(F) := G_\delta(F)$ comme suit. D'après \cite{BT84} 4.4, il existe un $\mathfrak{o}_F$-schéma en groupes canonique $\mathfrak{T}$, qui est lisse et de fibre générique $T$. Alors $\mathfrak{T}^0(\mathfrak{o}_F)$ s'identifie à un sous-groupe ouvert de $T(F)$. Prenons la mesure de Haar sur $T(F)$ telle que $\mes(\mathfrak{T}^0(\mathfrak{o}_F))=1$. La même convention s'applique aux commutants des éléments fortement réguliers dans n'importe quel $F$-groupe réductif connexe. Puisque la construction ne dépend que du tore $T$, c'est compatible avec la correspondance des mesures utilisée pour l'endoscopie \cite{Wa09} 2.8, ainsi que sa variante métaplectique \cite{Li09} \S 5.5.

\begin{definition}
  Soit $\tilde{\delta} \in \tilde{G}_\text{reg}$. Fixons la mesure de Haar sur $G_\delta(F)$ comme ci-dessus et posons
  \begin{gather*}
    r^{\tilde{G}}_{\tilde{M},K}(\tilde{\delta}) := |D^G(\delta)|^{\frac{1}{2}} \int_{G_\delta(F) \backslash G(F)} f_K(x^{-1}\tilde{\delta}x) v_M(x) \dd x ,
  \end{gather*}
  où $D^G(\delta) := \det(1-\Ad(\delta)|\mathfrak{g}/\mathfrak{g}_\delta)$.
\end{definition}

\begin{definition}
  Soit $\gamma \in M^\Endo_{G-\text{reg}}(F)$. L'intégrale pondérée endoscopique non ramifiée en $\gamma$ est définie comme
  \begin{gather*}
    r^{\tilde{G}}_{M^\Endo,K}(\gamma) := \sum_{\delta \in M(F)/\text{conj}} \Delta(\gamma, \tilde{\delta}) r^{\tilde{G}}_{\tilde{M},K}(\tilde{\delta}),
  \end{gather*}
  où $\tilde{\delta} \in \rev^{-1}(\delta)$ est quelconque; le produit $\Delta(\gamma, \tilde{\delta}) r^{\tilde{G}}_{\tilde{M},K}(\tilde{\delta})$ ne dépend que de $\gamma$ et $\delta$. C'est une somme finie, en fait la somme porte sur les classes de conjugaison dans $\mu(\mathcal{O}^\text{st}(\gamma))$.
\end{definition}

Rappelons brièvement les fonctions stabilisées définies par Arthur. Pour l'instant, soient $L$ un $F$-groupe réductif connexe non ramifié et $R$ un sous-groupe de Lévi de $L$. Supposons aussi fixée une forme quadratique définie positive sur $\mathfrak{a}_R$, invariante par $W^L(R)$. Imposons les mêmes choix de mesures de Haar sur $L(F)$ et sur les $F$-tores que précédemment.

On dit que $\gamma \in R(F)_\text{ss}$ est $L$-régulier s'il est régulier comme un élément dans $L(F)_\text{ss}$. La sous-variété ouverte des éléments $L$-réguliers est notée $R_{L-\text{reg}}$. On définit (voir \cite{Ar02} 5.1) la fonction stabilisée
$$ s^L_R: R_{L-\text{reg}}(F) \to \C . $$

Si $\gamma_1, \gamma_2 \in R_{L-\text{reg}}(F)$ sont stablement conjugués, on a $s^L_R(\gamma_1)=s^L_R(\gamma_2)$. Ces fonctions ne dépendent que de la mesure sur $\mathfrak{a}_R$ induite par le choix de la forme quadratique.

\subsection{Énoncé du lemme fondamental pondéré}\label{sec:enonce-LF}
Conservons les mêmes notations. On a l'inclusion $Z_{\dualmeta{M}}^\Gred \hookrightarrow Z_{\widehat{M^\Endo}}$. Soit $s \in \mathcal{E}_{M^\Endo}(\tilde{G})$. On a aussi $Z_{\dualmeta{G}}^\Gred \hookrightarrow Z_{\widehat{G[s]}}$. Ces inclusions ont des conoyaux finis, donc c'est loisible de poser
\begin{gather}\label{eqn:coef-meta}
  i_{M^\Endo}(\tilde{G},G[s]) := \dfrac{[Z_{\widehat{M^\Endo}} : Z_{\dualmeta{M}}^\Gred]}{[Z_{\widehat{G[s]}} : Z_{\dualmeta{G}}^\Gred]}.
\end{gather}

Prenons les formes quadratiques positives définies invariantes sur $\mathfrak{a}_{M^\Endo}$ et sur $\mathfrak{a}_M$ qui se correspondent par l'identification $\mathfrak{a}_{M^\Endo} \simeq \mathfrak{a}_M$, qui est équivariante pour $W^{G[s]}(M^\Endo) \hookrightarrow W^{G}(M)$. Soit $\gamma \in M^\Endo_{G-\text{reg}}(F)$, alors $\gamma[s]$ est aussi $G[s]$-régulier. Ainsi, $\gamma \mapsto s^{G[s]}_{M^\Endo}(\gamma[s])$ est bien défini et il ne dépend que de la classe de conjugaison stable de $\gamma$. Le principal résultat de cet article s'énonce comme suit.

\begin{theorem}[Cf. \cite{Ar02} 5.1]\label{prop:LF-pondere}
  Supposons vérifié le lemme fondamental pondéré non standard sur les algèbres de Lie \ref{prop:LF-nonstandard}. Pour tout $\gamma \in M^\Endo_{G-\text{reg}}(F)$, on a
  $$ r^{\tilde{G}}_{M^\Endo,K}(\gamma) = \sum_{s \in \mathcal{E}_{M^\Endo}(\tilde{G})} i_{M^\Endo}(\tilde{G},G[s]) \cdot s^{G[s]}_{M^\Endo}(\gamma[s]). $$
\end{theorem}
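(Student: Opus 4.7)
The plan is to adapt Waldspurger's proof of the twisted weighted fundamental lemma \cite{Wa09} to the metaplectic setting, exploiting the parametrisation of Lévis of $\tilde{G}$ and the endoscopic formalism established in \S\ref{sec:donnees-meta}. The starting point is Harish-Chandra descent: rather than proving the equality for all $\gamma \in M^\Endo_{G-\text{reg}}(F)$ at once, we fix an arbitrary semi-simple element $\epsilon \in M^\Endo(F)$ and prove it for $\gamma$ in a small neighborhood of $\epsilon$. Using the topological Jordan decomposition recalled in \S\ref{sec:notations}, we may further reduce to the case where $\epsilon$ is compact and $M^\Endo_\epsilon$ is quasi-déployé; otherwise both sides vanish by standard vanishing properties of stable and endoscopic distributions.

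The next step is to descend both sides onto Lie algebras. On the left, $r^{\tilde{G}}_{\tilde{M},K}(\tilde{\delta})$ unfolds, for $\tilde{\delta}$ near an element $\tilde{\delta}_0 \in \rev^{-1}(\delta)$ with $\delta$ corresponding to $\epsilon$, into a finite linear combination of weighted orbital integrals on $\mathfrak{m}_\delta(F)$ indexed by Lévis $L \in \mathcal{L}^G(M)$; summing against the transfer factor and using its parabolic descent (as in \cite{Li09} 5.18 and already exploited for Remarque \ref{rem:facteur-transfert}) converts this into endoscopic weighted orbital integrals for pairs $(L^\Endo_\epsilon, \tilde{L}_\delta)$, modulo the operation \ref{def:bar} replacing odd $\SO$ factors by $\Sp$. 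This identification of descended commutants and endoscopic data is the business of \S\ref{sec:descente-donnees}. On the right, each $s^{G[s]}_{M^\Endo}(\gamma[s])$ descends, by the analogous formalism for stabilized functions, into a linear combination of Lie-algebra stabilized integrals on the centralizers in $G[s]$. The torsion $\gamma \mapsto \gamma[s]$ of \ref{def:torsion} is precisely what makes the descended commutants of $\gamma[s]$ in $G[s]$ match those of $\delta$ in $G$, in light of \ref{prop:torsion-correspondance}.

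Once both sides are expressed as linear combinations, indexed by a common set $E^\natural$ built in \S\ref{sec:descente-int}, of stabilized integrals on Lie algebras of endoscopic groups of $M_\delta$, we substitute the standard weighted fundamental lemma on Lie algebras (Chaudouard--Laumon \cite{CL09-1,CL09-2}, reduced from positive to null characteristic via \cite{Wa09-IF}) together with the non-standard variant \ref{prop:LF-nonstandard}. This transforms the two sides into linear combinations of the \emph{same} family of Lie-algebra stabilized integrals, but with a priori different coefficients depending on $E^\natural$. The theorem thus reduces to a combinatorial identity between two families of coefficients, one constructed from $i_{M^\Endo}(\tilde{G},G[s])$ and Arthur's descent coefficients, the other from the iterated descent on the endoscopic side.

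The main obstacle, I expect, is precisely this comparison of coefficients carried out in \S\ref{sec:comparaison}. Following Arthur \cite{Ar99}, the strategy is a ``yoga des centres'': rewrite both families in terms of indices of images of the groups $Z_{\dualmeta{M}}^\Gred$, $Z_{\dualmeta{G}}^\Gred$, $Z_{\widehat{M^\Endo}}$, $Z_{\widehat{G[s]}}$ inside suitable quotients, and reduce the identity to a manipulation of exact sequences of finite abelian groups together with the sign combinatorics of $\mathfrak{S}^\pm(I') \times \mathfrak{S}^\pm(I'') \hookrightarrow \mathfrak{S}^\pm(I)$. The subtle point here is that although $Z_{\dualmeta{G}}^\Gred = \{1\}$ is trivial as a group, it nevertheless plays a bookkeeping role: the map ``Lévi of endoscopic group'' differs from ``endoscopic group of Lévi'' precisely by the torsion $z[s]$, and it is this discrepancy that the yoga of \S\ref{sec:yoga-centres} must absorb to produce the numerical coefficients $i_{M^\Endo}(\tilde{G}, G[s])$ from \eqref{eqn:coef-meta}.
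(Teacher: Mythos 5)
Your proposal reproduces the paper's strategy faithfully: Harish--Chandra descent about a semi-simple $\epsilon \in M^\Endo(F)$, reduction via the topological Jordan decomposition to the case where $\gamma$ is compact and $M^\Endo_\epsilon$ is non-ramified (\ref{prop:LF-pondere-trivial}), descent of both sides to Lie-algebra stabilized integrals indexed by the set $E^\natural$ (\ref{prop:descente-endo-1}, \ref{prop:descente-st-1}, \ref{prop:est}, \ref{prop:einst}), substitution of the standard and non-standard weighted fundamental lemmas (\ref{prop:LF-pondere-alg}, \ref{prop:LF-nonstandard-SO}), and reduction to the combinatorial identity \ref{prop:cinstcst} proven by the yoga of centers in \S\ref{sec:yoga-centres}. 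Two minor imprecisions worth flagging for the write-up: the descent sum runs over Lévis $L \in \mathcal{L}^{G_\eta}(R)$ of the centralizer, not over $\mathcal{L}^G(M)$; and the final coefficient comparison in \S\ref{sec:yoga-centres} rests on index computations of finite subgroups of complex center groups following \cite{Ar99}, together with \ref{prop:coef-nonstandard} for the non-standard coefficient, rather than on the $\mathfrak{S}^\pm$ combinatorics you invoke.
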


Le lemme fondamental pondéré non standard sera rappelé dans le \S\ref{sec:non-standard}. La démonstration du théorème occupera le reste de cet article.

\begin{remark}
  Lorsque $M=G$, on a $\mathcal{E}_{G^\Endo}(\tilde{G})=\{s_0\}$, $\gamma[s_0]=\gamma$ et $G[s_0]=G^\Endo$. On a aussi $i_{G^\Endo}(\tilde{G},G^\Endo)=1$. Dans ce cas-là $r^{\tilde{G}}_{G^\Endo,K}(\gamma)$ est l'intégrale orbitale endoscopique de la fonction $f_K$, et $s^{G^\Endo}_{G^\Endo}(\gamma)$ est l'intégrale orbitale stable de la fonction caractéristique d'un hyperspécial quelconque de $G^\Endo(F)$. Donc \ref{prop:LF-pondere} se réduit au lemme fondamental pour l'unité \cite{Li09} 5.23.
\end{remark}

Les définitions des intégrales orbitales pondérées endoscopiques et les fonctions stabilisées correspondantes se généralisent sans peine au cas $\tilde{G}$ de type métaplectique: il suffit de combiner la théorie pour les groupes $\GL(\cdot)$ avec le résultat pour $\Mp(W)$.

\begin{corollary}
  L'assertion \ref{prop:LF-pondere} demeure valable dans le cas plus général où $\tilde{G}$ est de type métaplectique.
\end{corollary}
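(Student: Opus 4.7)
Le plan est de réduire l'assertion à la conjonction de \ref{prop:LF-pondere} (cas $\tilde{G} = \Mp(W)$) et du lemme fondamental pondéré standard de Chaudouard--Laumon \cite{CL09-1,CL09-2} pour les facteurs $\GL(\cdot)$, en exploitant la multiplicativité de tous les ingrédients par rapport à la décomposition en produit direct. Écrivons $\tilde{G} = \prod_{j \in J} \GL(m_j) \times \Mp(W_0)$, de sorte qu'un Lévi $\tilde{M}$ de $\tilde{G}$ s'écrit $\tilde{M} = \prod_{j \in J} L_j \times \tilde{M}_0$, avec $L_j \subset \GL(m_j)$ et $\tilde{M}_0 \subset \Mp(W_0)$ métaplectique. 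Une donnée $s_0 \in \mathcal{E}_\mathrm{ell}(\tilde{M})$ se décompose en $((s_0^j)_j, s_0^{(0)})$, et d'après \ref{prop:endo-ell} on obtient
$$ \mathcal{E}_{M^\Endo}(\tilde{G}) \simeq \prod_{j \in J} \mathcal{E}_{L_j^\Endo}(\GL(m_j)) \times \mathcal{E}_{M_0^\Endo}(\Mp(W_0)), $$
chaque facteur $\mathcal{E}_{L_j^\Endo}(\GL(m_j))$ étant un singleton puisque les données endoscopiques elliptiques de $\GL$ sont tautologiques.

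Je vérifierais d'abord que chaque ingrédient de \ref{prop:LF-pondere} est multiplicatif sur cette décomposition. Pour un sous-groupe hyperspécial $K$ associé à un réseau autodual en bonne position relativement à $M$, on a $K = \prod_j K_j \times K_0$, et les scindages de $\rev$ au-dessus des hyperspéciaux sont compatibles par construction du modèle latticiel. Munissons $\mathfrak{a}_M = \bigoplus_j \mathfrak{a}_{L_j} \oplus \mathfrak{a}_{\tilde{M}_0}$ de la somme directe orthogonale des formes quadratiques invariantes choisies sur les facteurs; alors la $(G,M)$-famille définissant $v_M$ se factorise, et il en va de même pour $r^{\tilde{G}}_{\tilde{M},K}$. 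Le facteur de transfert \ref{def:facteur-transfert} étant par définition multiplicatif en composantes, $r^{\tilde{G}}_{M^\Endo,K}$ se factorise également.

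Je vérifierais ensuite la factorisation du membre de droite: le coefficient $i_{M^\Endo}(\tilde{G},G[s])$ défini par \eqref{eqn:coef-meta} est un produit sur les facteurs car les centres $Z_{\dualmeta{G}}^\Gred$ et $Z_{\widehat{G[s]}}$ se décomposent; la torsion $z[s]$ de \ref{def:torsion} est concentrée dans la composante métaplectique (les facteurs $\GL$ supplémentaires de $\tilde{G}$ n'y interviennent pas); les fonctions stabilisées d'Arthur \cite{Ar02} sont multiplicatives sur les produits directs de groupes réductifs non ramifiés. La formule cherchée devient alors un produit d'identités: sur chaque facteur $\GL(m_j)$ c'est le lemme fondamental pondéré non ramifié classique de Chaudouard--Laumon \cite{CL09-1,CL09-2} appliqué à l'unité de l'algèbre de Hecke sphérique, et sur le facteur $\Mp(W_0)$ c'est précisément \ref{prop:LF-pondere}, démontré sous l'hypothèse \ref{prop:LF-nonstandard}.

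La principale difficulté attendue est la vérification soigneuse de ces factorisations, notamment la gestion cohérente des mesures de Haar (canonisées par \cite{BT84} pour les tores), des formes invariantes sur les espaces $\mathfrak{a}$, et des scindages hyperspéciaux. Aucune difficulté conceptuelle nouvelle n'apparaît, le revêtement $\rev$ étant trivial sur les facteurs $\GL(m_j)$ d'après \eqref{eqn:type-metaplectique}; ainsi l'ensemble de la machinerie endoscopique métaplectique se ramène composante par composante aux cas déjà traités.
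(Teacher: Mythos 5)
Votre preuve est correcte et suit essentiellement la même approche que le papier, qui se contente d'indiquer (sans démonstration détaillée) qu'« il suffit de combiner la théorie pour les groupes $\GL(\cdot)$ avec le résultat pour $\Mp(W)$ ». Vous explicitez la multiplicativité des ingrédients (réseau autodual, poids $v_M$, facteurs de transfert, coefficients $i_{M^\Endo}$, torsion $z[s]$, fonctions stabilisées) ce que le papier laisse implicite; notons seulement que pour les facteurs $\GL(m_j)$ l'identité n'a pas vraiment besoin de Chaudouard--Laumon puisque l'endoscopie de $\GL$ étant tautologique, $s^{\GL(m_j)}_{L_j}=r^{\GL(m_j)}_{L_j,K_j}$ par définition.
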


\section{Endoscopie: standard et non standard}\label{sec:endoscopie}
\subsection{Endoscopie standard}
\paragraph{Données endoscopiques}
Soit $R$ un $F$-groupe réductif muni d'une donnée de L-groupe $(\hat{T}, \hat{B}, \ldots)$. On appelle donnée endoscopique pour $R$ un quadruplet $(R^\Endo,\mathcal{R}^\Endo,s,\hat{\xi})$ tel que
\begin{itemize}
  \item $R^\Endo$ est un $F$-groupe réductif connexe quasi-déployé, muni d'une donnée de L-groupe;
  \item $\mathcal{R}^\Endo$ s'inscrit dans une extension topologique scindée
    $$ 1 \to \widehat{R^\Endo} \to \mathcal{R}^\Endo \to W_F \to 1 $$
    telle que l'action de $W_F$ sur $\widehat{R^\Endo}$ coïncide avec $\rho^\Endo$;
  \item $s \in \hat{R}$ est semi-simple;
  \item $\hat{\xi}: \mathcal{R}^\Endo \to \Lgrp{R}$ est un L-homomorphisme, i.e. il commute aux projections sur $W_F$, tel que:
    \begin{itemize}
      \item il existe un $1$-cocycle $a: W_F \to Z_{\hat{R}}$, dont la classe cohomologique est triviale, tel que $\Ad(s) \circ \hat{\xi}(x) = a(w(x)) \hat{\xi}(x)$ pour tout $x \in \mathcal{R}^\Endo$, où $w(x)$ désigne sa projection dans $W_F$;
      \item $\hat{\xi}$ induit un isomorphisme $\widehat{R^\Endo} \rightiso Z_{\hat{R}}(s)^0$.
    \end{itemize}
\end{itemize}

On dit aussi que $R^\Endo$ est un groupe endoscopique pour $R$ associés à $s$. Un isomorphisme entre deux données endoscopiques $(R^\Endo,\mathcal{R}^\Endo,s,\hat{\xi})$ et $(R'^\Endo,\mathcal{R}'^\Endo,s',\hat{\xi}')$ de $R$ est un élément $\hat{r} \in \hat{R}$ tel que $\Ad(\hat{r})\hat{\xi}(\mathcal{R}^\Endo) = \hat{\xi}'(\mathcal{R}'^\Endo)$ et $\Ad(\hat{r})(s) = s' \mod Z_{\hat{R}}$. On dit qu'une donnée endoscopique $(R^\Endo,\mathcal{R}^\Endo,s,\hat{\xi})$ est elliptique si $\hat{\xi}(Z_{\widehat{R^\Endo}}^{\Gamma_F})^0 \subset Z_{\hat{R}}$. On dit que cette donnée est non ramifiée si
\begin{itemize}
  \item la caractéristique résiduelle $p$ de $F$ est suffisamment grande par rapport à $R$;
  \item $R$ est non ramifié et le torseur intérieur fixé est l'identité;
  \item $R^\Endo$ est non ramifié et $\hat{\xi}(\mathcal{R}^\Endo) \supset Z_{\hat{R}}(s)^0 \rtimes I_F$.
\end{itemize}

À isomorphisme près, on peut supposer que $s \in \hat{T}$ et $(\hat{\xi}^{-1}(\hat{T}), \hat{\xi}^{-1}(\hat{B}))$ est la paire de Borel $\Gamma_F$-invariante de $\widehat{R^\Endo}$. L'endoscopie fournit une inclusion $\Gamma_F$-équivariante $Z_{\hat{R}} \hookrightarrow Z_{\widehat{R^\Endo}}$. D'autre part, en dualisant $\hat{\xi}|_{\hat{\xi}^{-1}(\hat{T})}$, on obtient une inclusion $\mathfrak{a}_R \hookrightarrow \mathfrak{a}_{R^\Endo}$. La donnée endoscopique est elliptique si et seulement si $\mathfrak{a}_R \rightiso \mathfrak{a}_{R^\Endo}$.

\paragraph{Une construction d'Arthur}
Supposons que $R$ est un sous-groupe de Lévi d'un $F$-groupe réductif non ramifié $L$. Choisissons $P_0 \in \mathcal{L}^L(R)$, qui fait partie du choix des données de L-groupes compatibles pour $L$ et $R$, ce que l'on fixe, dont les torseurs intérieurs sont $\identity$.

La construction suivante est due à Arthur \cite{Ar99}. Soit $(R^\Endo, \mathcal{R}^\Endo, s_0, \hat{\xi})$ une donnée endoscopique elliptique non ramifiée pour $R$. Soit $s \in s_0 Z_{\hat{R}}^{\Gamma_F}/Z_{\hat{L}}^{\Gamma_F}$. Posons
\begin{align*}
  \widehat{L[s]} &:= Z_{\hat{L}}(s)^0, \\
  \mathcal{L}[s] &:= \widehat{L[s]} \hat{\xi}(\mathcal{R}^\Endo), \\
  \hat{\xi}[s] &:\mathcal{L}[s] \to \Lgrp{L}  \text{ est l'inclusion.}
\end{align*}

Cela fournit une donnée endoscopique $(L[s], \mathcal{L}[s], s, \hat{\xi}[s])$ pour $L$ qui est encore non ramifiée. Posons
$$ \mathcal{E}_{R^\Endo}(L) := \{ s \in s_0 Z_{\hat{R}}^{\Gamma_F}/Z_{\hat{L}}^{\Gamma_F} : (L[s],\mathcal{L}[s], s, \hat{\xi}[s]) \text{ est elliptique} \}. $$

C'est un ensemble fini d'après \cite{Ar98} \S 4. Soit $s \in \mathcal{E}_{R^\Endo}(L)$, alors on peut regarder $R^\Endo$ comme un sous-groupe de Lévi de $L[s]$. L'endoscopie elliptique fournit l'homomorphisme $W^{L[s]}(R^\Endo) \hookrightarrow W^L(R)$, pour lequel l'isomorphisme $\mathfrak{a}_R \rightiso \mathfrak{a}_{R^\Endo}$ est équivariant. On sait aussi définir le coefficient
$$ i_{R^\Endo}(L,L[s]) := \frac{[Z_{\widehat{R^\Endo}}^{\Gamma_F} : Z_{\hat{R}}^{\Gamma_F}]}{[Z_{\widehat{L[s]}}^{\Gamma_F} : Z_{\hat{L}}^{\Gamma_F}]}. $$

\paragraph{Le lemme fondamental pondéré sur les algèbres de Lie}
Plaçons-nous dans la construction d'Arthur. Fixons une donnée endoscopique elliptique non ramifiée $(R^\Endo, \mathcal{R}^\Endo, s_0, \hat{\xi})$ pour $R$. À cette donnée est associée une correspondance de classes de conjugaison géométriques semi-simples entre les algèbres de Lie $\mathfrak{r}(F)$ et $\mathfrak{r}^\Endo(F)$. On définit ainsi la sous-variété ouverte des éléments $L$-régulières $\mathfrak{r}^\Endo_{L-\text{reg}}$ de façon usuelle.

Fixons un sous-groupe hyperspécial $K$ de $L(F)$ en bonne position relativement à $R$, ce qui détermine un réseau hyperspécial $\mathfrak{k} \subset \mathfrak{l}(F)$. Fixons aussi une forme quadratique définie positive $W^L(R)$-invariante sur $\mathfrak{a}_R$; on obtient ainsi une forme quadratique définie positive sur $\mathfrak{a}_{R^\Endo}$ qui est $W^{L[s]}(R^\Endo)$-invariante pour tout $s \in \mathcal{E}_{R^\Endo}(L)$. Notons $\mathbbm{1}_{\mathfrak{k}}$ la fonction caractéristique de $\mathfrak{k}$. Ces choix permettent de définir les intégrales orbitales pondérées de $\mathbbm{1}_{\mathfrak{k}}$
$$ r^L_{R,K}(X) := |D^L(X)|^{\frac{1}{2}} \int_{L_X(F) \backslash L(F)} \mathbbm{1}_{\mathfrak{k}}(x^{-1} X x) v_R(x) \dd x, \qquad X \in \mathfrak{r}_\text{reg}(F), $$
où $D^L(X) := \det(\ad(X)|\mathfrak{l}/\mathfrak{l}_X)$. Les mesures de Haar sont choisies comme dans le \S\ref{sec:enonce-LF}.

On définit le facteur de transfert
$$ \Delta: \mathfrak{r}^\Endo_{R-\text{reg}}(F) \times \mathfrak{r}_\text{reg}(F) \to \C $$
qui est adapté à $K$, cf. \cite{Wa08} 4.7. On définit les intégrales orbitales pondérées endoscopiques
$$ r^L_{R^\Endo, K}(Y) = \sum_{X \in \mathfrak{r}(F)/\text{conj}} \Delta(Y,X) r^L_{R,K}(X), $$
ainsi que les fonctions stabilisées $s^{L[s]}_{R^\Endo}(Y)$, pour tout $Y \in \mathfrak{r}^\Endo_{L-\text{reg}}(F)$ et tout $s \in \mathcal{E}_{R^\Endo}(L)$ (voir \cite{Wa09} ou \S\ref{sec:int-orb}). Énonçons le lemme fondamental pondéré comme suit.

\begin{theorem}[Chaudouard, Laumon \cite{CL09-1,CL09-2}, Waldspurger \cite{Wa09-IF}]\label{prop:LF-pondere-alg}
  Pour tout $Y \in \mathfrak{r}^\Endo_{R-\text{reg}}(F)$, on a
  $$ r^L_{R^\Endo, K}(Y) = \sum_{s \in \mathcal{E}_{R^\Endo}(L)} i_{R^\Endo}(L,L[s]) s^{L[s]}_{R^\Endo}(Y). $$
\end{theorem}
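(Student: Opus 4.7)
The statement being attributed to Chaudouard-Laumon \cite{CL09-1,CL09-2} and Waldspurger \cite{Wa09-IF}, the plan is to invoke their strategy rather than to reprove the identity from scratch. The proof naturally splits into two essentially independent parts: a reduction from characteristic zero to equal characteristic, and a geometric computation in equal characteristic.

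First, following Waldspurger \cite{Wa09-IF}, I would reduce from $F$ of characteristic zero to a local field $F'$ of equal characteristic with the same residue field. Both sides of the asserted identity admit germ expansions near a semisimple element $Y_0 \in \mathfrak{r}^\Endo(F)$, whose coefficients are homogeneous invariant distributions supported on topologically nilpotent elements. The crucial point is that these coefficients are of algebraic nature (essentially Shalika germs and their stable/endoscopic analogs), so once $p$ is sufficiently large relative to $L$ they coincide between $F$ and $F'$. Together with the analogous comparison for the fonctions stabilisées $s^{L[s]}_{R^\Endo}$, this reduces the identity over $F$ to the identity over $F'$.

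Second, in equal characteristic, I would appeal to the geometric proof of Chaudouard-Laumon \cite{CL09-1,CL09-2}. Their method extends Ngô Bao Châu's resolution of the standard fundamental lemma to the weighted setting: one globalizes by realizing the weighted orbital integrals $r^L_{R,K}$ as point counts in fibers of a weighted Hitchin fibration over a smooth projective curve, and then analyzes the direct image complex of the Hitchin map. The $\kappa$-decomposition of this direct image, combined with the endoscopic stabilization of each summand, yields the desired equality between weighted endoscopic integrals on $L$ and stabilized integrals on $L[s]$, weighted by $i_{R^\Endo}(L,L[s])$.

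The main obstacle, now resolved in the cited work, is the weighted version of Ngô's support theorem: one must show that the perverse sheaves arising from the truncated weighted Hitchin fibration have the expected supports, despite the non-compactness of the moduli space and the need to implement Arthur's truncation geometrically. For the purposes of this article, \ref{prop:LF-pondere-alg} is used as a black box.
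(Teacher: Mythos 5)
The paper does not prove this theorem: it is stated as a black-box input, attributed to Chaudouard--Laumon for the equal-characteristic case and to Waldspurger for the reduction from characteristic zero, exactly as the introduction announces. Your proposal correctly recognizes this and gives an accurate high-level summary of the strategy of the cited works (germ-expansion transfer across residually-matching local fields, globalization to a truncated/weighted Hitchin fibration, $\kappa$-decomposition and the weighted support theorem), before concluding — as the paper does — that the result is to be used as a black box.
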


\subsection{Exemples}\label{sec:ex-endo}
Nous allons considérer l'endoscopie elliptique des groupes linéaires généraux, symplectiques et unitaires. Nous en donnerons les conditions pour que la donnée endoscopique soit non ramifiée et la construction d'Arthur sera explicitement décrite. Le cas non elliptique sera traité dans \ref{rem:non-ell}.

\begin{example}
  Soient $m \in \Z_{\geq 0}$, $R := \GL(m)$. La seule donnée endoscopique elliptique pour $\GL(m)$ à isomorphisme près est la donnée tautologique $(R, \Lgrp{R}, 1, \identity)$.
\end{example}

\begin{example}\label{ex:symplectique}
  Soit $R := \Sp(2m)$. Alors $\hat{R} = \SO(2m+1,\C)$. Soit $(R^\Endo,\mathcal{R}^\Endo,s,\hat{\xi})$ une donnée endoscopique elliptique pour $R$. Les valeurs propres de $s$ sont forcément $\pm 1$ d'après l'ellipticité. Notons $2m'+1$ la multiplicité de $+1$ et $2m''$ celle de $-1$. On a
  $$\widehat{R^\Endo} = \SO(2m'+1,\C) \times \SO(2m'',\C).$$
  Donc
  $$ R^\Endo = \Sp(2m') \times \SO(V'',q'')$$
  où $(V'',q'')$ est un $F$-espace quadratique de dimension $2m''$ ayant noyau anisotrope de dimension $\leq 2$ car $R^\Endo$ est quasi-déployé. On exclut le cas $\dim V'' = 2$ et $(V'',q'')$ hyperbolique, pour lequel la donnée endoscopique n'est plus elliptique. De plus, $\SO(V'',q'')$ détermine $\mathcal{H}^\Endo$.

  Inversement, toutes données $m'$ et $(V'',q'')$ comme ci-dessus proviennent d'une donnée endoscopique elliptique, unique à isomorphisme près. La donnée endoscopique est non ramifiée si et seulement si $(V'',q'')$ admet un réseau autodual à homothétie près.

  Supposons $R^\Endo$ de la forme ci-dessus. Soient $X \in \mathfrak{r}_\text{reg}(F)$, $Y=(Y',Y'') \in \mathfrak{r}^\Endo_\text{reg}(F)$, alors $X$ et $Y$ se correspondent si et seulement s'ils se correspondent par valeurs propres, c'est-à-dire
  \begin{itemize}
    \item les valeurs propres de $Y'$ sont $\pm a'_1, \ldots, \pm a'_{m'}$;
    \item celles de $Y''$ sont $\pm a''_1, \ldots, \pm a'_{m''}, 0$;
    \item celles de $X$ sont $\pm a'_1, \ldots, \pm a'_{m'}, \pm a''_1, \ldots, \pm a''_{m''}$.
  \end{itemize}

  Prenons maintenant un ensemble fini $I$, $(n_i)_{i \in I} \in \Z_{\geq 1}^I$, et
  \begin{align*}
    L & = \Sp(2n), \\
    R & = \prod_{i \in I} \GL(n_i) \times \Sp(2m), \\
    R^\Endo & = \prod_{i \in I} \GL(n_i) \times \Sp(2m') \times \SO(V'',q''),
  \end{align*}
  tels que $m + \sum_{i \in I} n_i = n$, où $m$, $m'$ et $(V'',q'')$ vérifient les conditions précédentes, qui font de $R^\Endo$ un groupe endoscopique elliptique non ramifiée de $R$ (tautologique en les composantes $\GL(n_i)$). À isomorphisme près, l'élément $s_0$ correspondant dans la donnée endoscopique est de la forme
  $$ s_0 = ((1)_{i \in I}, s_0^\flat) \in \prod_{i \in I} \C^\times \times \SO(2m+1,\C) \subset \hat{R}.$$

  Les éléments $s \in \mathcal{E}_{R^\Endo}(L)$ sont en bijection avec les décompositions $I=I' \sqcup I''$: à une telle décomposition est associée $s=((s_i)_{i \in I},s_0^\flat)$ avec $s_i=1$  (resp. $s_i=-1$) si $i \in I'$ (resp. si $i \in I''$). Soit $s \in \mathcal{E}_{R^\Endo}(L)$, on introduit le groupe endoscopique elliptique $L[s]$ de $L$; écrivons-le comme
  $$ L[s] = \Sp(2n') \times \SO(U'',r'') $$
  d'après ce qui précède. On a des inclusions uniques à conjugaison près:
  \begin{align*}
    \prod_{i \in I'} \GL(n_i) \times \Sp(2m') & \hookrightarrow \Sp(2n'), \\
    \prod_{i \in I''} \GL(n_i) \times \SO(V'',q'') & \hookrightarrow \SO(U'',r'').
  \end{align*}
  Ces flèches font de $R^\Endo$ un sous-groupe de Lévi de $L[s]$. Ainsi, $\SO(U'',r'')$ est aussi déterminé: à homothétie près, $(U'',r'')$ et $(V'',q'')$ ont le même noyau anisotrope.

  Inversement, tout $(U'',r'')$ ayant le même noyau anisotrope que $(V'',q'')$, pris à homothétie près, provient d'un unique $s \in \mathcal{E}_{R^\Endo}(L)$.
\end{example}

\begin{example}\label{ex:unitaire}
  Soient $E/F$ une extension quadratique et $R := \U_{E/F}(m)$, le groupe unitaire quasi-déployé qui est isomorphe à $\GL(m)$ sur $E$. On a $\hat{R} = \GL(m,\C)$. Il est muni de l'action du groupe $\Gamma_{E/F} := \Gamma_F/\Gamma_E$: l'élément non trivial dans $\Gamma_{E/F}$ opère, modulo automorphismes intérieurs, en renversant le diagramme de Dynkin $\mathbf{A}_{m-1}$. On introduit ainsi le L-groupe $\Lgrp{R}$.

  Décrivons les données endoscopiques elliptiques $(R^\Endo,\mathcal{R}^\Endo,s,\hat{\xi})$ pour $R$. Les valeurs propres de $s$ sont $\pm 1$; notons $m'$ la multiplicité de $1$ et $m''$ celle de $-1$. Alors
  $$ \widehat{R^\Endo} = \GL(m',\C) \times \GL(m'',\C). $$

  Étant donné $(m',m'')$, il n'y a qu'une seule possibilité de $R^\Endo$ et de $\mathcal{R}^\Endo$:
  $$ R^\Endo = \U_{E/F}(m') \times \U_{E/F}(m''). $$

  Inversement, toute donnée endoscopique elliptique de $R$ s'obtient d'une paire $(m',m'')$ telle que $m'+m''=m$, unique à la symétrie $(m',m'') \mapsto (m'',m')$ près. La donnée endoscopique est non ramifiée si et seulement si $E/F$ l'est.

  La correspondance des classes de conjugaison est similaire au cas du groupe symplectique, c'est-à-dire la correspondance par valeurs propres des applications $E$-linéaires. Nous ne la répétons pas.

  Supposons maintenant que $E/F$ est non ramifiée. Prenons $I,(n_i)_{i \in I}$ comme dans \ref{ex:symplectique} et posons
  \begin{align*}
    L & = \U_{E/F}(n), \\
    R & = \prod_{i \in I} \GL_E(n_i) \times \U_{E/F}(m), \\
    R^\Endo & = \prod_{i \in I} \GL_E(n_i) \times \U_{E/F}(m') \times \U_{E/F}(m''),
  \end{align*}
  qui font de $R^\Endo$ un groupe endoscopique elliptique non ramifiée de $R$. On peut supposer que l'élément $s_0$ de cette donnée endoscopique s'écrit
  $$ s_0 = ((1)_{i \in I}, s_0^\flat) \in \prod_{i \in I} \C^\times \times \GL(m,\C).$$

  Comme dans \ref{ex:symplectique}, les éléments $s \in \mathcal{E}_{R^\Endo}(L)$ sont en bijection avec les décompositions $I=I' \sqcup I''$; le groupe endoscopique elliptique associé est de la forme
  $$ L[s] = \U_{E/F}(n') \times \U_{E/F}(n''). $$
  On a des inclusions bien déterminées à conjugaison près:
  \begin{align*}
    \prod_{i \in I'} \GL_E(n_i) \times \U_{E/F}(m') & \hookrightarrow \U_{E/F}(n'), \\
    \prod_{i \in I''} \GL_E(n_i) \times \U_{E/F}(m'') & \hookrightarrow \U_{E/F}(n'').
  \end{align*}
  Cela détermine aussi $(n',n'')$. Remarquons que des différents choix de $s$ peuvent induire la même donnée endoscopique pour $L$.
\end{example}

\begin{remark}\label{rem:non-ell}
  En général, une donnée endoscopique $(L^\Endo,\mathcal{L}^\Endo,s,\hat{\xi})$ s'interprète comme une donnée endoscopique elliptique d'un sous-groupe de Lévi $R$ de $L$. Le sous-groupe $R$ est unique à conjugaison près par $L(F)$, tandis que la donnée endoscopique elliptique pour $R$ est unique à l'action de $W^L(R)$ près.
\end{remark}

\subsection{Endoscopie non standard}\label{sec:non-standard}
\paragraph{Définitions}
Rappelons la définition dans \cite{Wa09} 3.7. Soient $G_1$, $G_2$ deux $F$-groupes réductifs connexes et simplement connexes. Supposons qu'ils sont non ramifiés et $p$ est suffisamment grand par rapport à $G_1$ et $G_2$. Pour $i=1,2$, fixons une paire de Borel $(T_i,B_i)$ définie sur $F$ pour $G_i$; notons $\Sigma_i$, $\check{\Sigma}_i$ les ensembles de racines et coracines, respectivement. Une donnée endoscopique non standard non ramifiée est un triplet $(G_1,G_2,j_*)$ où
\begin{itemize}
  \item $G_1, G_2$ sont comme ci-dessus, munis de paires de Borel définies sur $F$;
  \item $j_*: X_*(T_1)_{\bar{F}} \otimes \Q \rightiso X_*(T_2)_{\bar{F}} \otimes \Q$, son transposé est noté $j^*$;
  \item il existe des bijections $\check{\tau}: \check{\Sigma}_1 \to \check{\Sigma}_2$, $\tau: \Sigma_2 \to \Sigma_1$ et des fonctions $\check{b}: \check{\Sigma}_1 \to \Q_{>0} \cap \Z_p^\times$, $b: \Sigma_2 \to \Q_{>0} \cap \Z_p^\times$, telles que
  \begin{itemize}
    \item $\tau$ s'identifie à $\check{\tau}^{-1}$ via les bijections naturelles $\Sigma_i \simeq \check{\Sigma}_i$, $i=1,2$;
    \item $j^*$ et $j_*$ sont $\Gamma_F$-équivariants;
    \item on a $j_*(\check{\alpha}_1) = \check{b}(\check{\alpha}_1) \check{\tau}(\check{\alpha}_1)$ et $j^*(\alpha_2) = b(\alpha_2) \tau(\alpha_2)$ pour tout $\check{\alpha}_1 \in \check{\Sigma}_1$ et tout $\alpha_2 \in \Sigma_2$.
  \end{itemize}
\end{itemize}

À l'instar du cas standard, ces données définissent
\begin{itemize}
  \item une correspondance de classes de conjugaison géométriques entre $\mathfrak{g}_{1,\text{reg}}(F)$ et $\mathfrak{g}_{2,\text{reg}}(F)$;
  \item les bijections de racines induisent une correspondance de sous-groupes de Lévi semi-standards de $G_1$ et $G_2$.
\end{itemize}

Soient $M_1 \subset G_1$, $M_2 \subset G_2$ des sous-groupes de Lévi semi-standards qui se correspondent, alors $W^{G_1}(M_1) = W^{G_2}(M_2)$ et on a un isomorphisme équivariant $\mathfrak{a}_{M_1} \rightiso \mathfrak{a}_{M_2}$.

Notons $R_1$ (resp. $R_2$) le sous-groupe engendré par $\check{\Sigma}_1$ (resp. $\check{\Sigma}_2$). La même définition s'applique à $M_1$, $M_2$ et leurs coracines. On définit ainsi les groupes $R^{M_1}$, $R^{M_2}$.

Soient $H_1, H_2$ deux sous-groupes commensurables dans un groupe fixé, adoptons la convention $[H_1 : H_2] := [H_1 : H_1 \cap H_2] [H_2 : H_1 \cap H_2]^{-1}$. D'après \cite{Wa09} 3.7 et l'erratum \cite{WaErr}, on définit le coefficient

\begin{gather}\label{eqn:coef-nonstandard-general}
  c^{G_1,G_2}_{M_1,M_2} := \frac{[R_2^{\Gamma_F} : j_*(R_1^{\Gamma_F})]}{[R^{M_2, \Gamma_F} : j_*(R^{M_1,\Gamma_F})]}.
\end{gather}

Signalons que ce coefficient dépend de $j_*$. Énonçons le lemme fondamental pondéré non standard comme une conjecture.
\begin{conjecture}[\cite{Wa09}]\label{prop:LF-nonstandard}
  Soient $Y_1 \in \mathfrak{m}_{1,G_1-\mathrm{reg}}(F)$ et $Y_2 \in \mathfrak{m}_{2,G_2-\mathrm{reg}}(F)$ qui se correspondent. Alors
  $$ s^{G_1}_{M_1}(Y_1) = c^{G_1,G_2}_{M_1,M_2} s^{G_2}_{M_2}(Y_2) $$
  où les fonctions stabilisées $s^{G_i}_{M_i}$ ($i=1,2$) sont définies à l'aide des formes quadratiques invariantes sur $\mathfrak{a}_{M_1}$ et $\mathfrak{a}_{M_2}$ qui se correspondent via $\mathfrak{a}_{M_1} \rightiso \mathfrak{a}_{M_2}$.
\end{conjecture}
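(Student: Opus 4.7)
The plan is to adapt to the non-standard setting the geometric machinery of Ngô and Chaudouard--Laumon that underlies the standard weighted fundamental lemma \ref{prop:LF-pondere-alg}.

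First, I would reduce to the case of simple, simply connected groups. A non-standard endoscopic datum $(G_1,G_2,j_*)$ decomposes (after passing to isotypic components of the root system under $\Gamma_F$) into a product of elementary data, and both sides of the conjectured equality, as well as the coefficient $c^{G_1,G_2}_{M_1,M_2}$ defined in \eqref{eqn:coef-nonstandard-general}, factor accordingly. The essentially non-trivial building blocks are the pairs $(\Sp(2n),\Spin(2n+1))$ of type $(\mathbf{C}_n,\mathbf{B}_n)$, for which $j_*$ is the natural identification of coroot lattices with the factor $2$ rescalings. The residual simply-laced cases should be tautological.

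Second, following the technique of Waldspurger \cite{Wa09-IF}, I would reduce the statement in characteristic zero to the analogous statement over a local function field of the same sufficiently large residue characteristic. In positive characteristic, the stabilized weighted orbital integral $s^{G_i}_{M_i}(Y_i)$ of the unit of the unramified Hecke algebra can be realized, via the work of Chaudouard--Laumon, as a local contribution to the trace of Frobenius on the stable part of the cohomology of a suitable compactification of the Hitchin fibration for $G_i$.

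Third, one identifies the Hitchin bases for $G_1$ and $G_2$ via the isomorphism $j^*\colon X^*(T_2)_{\bar F}\otimes\Q \rightiso X^*(T_1)_{\bar F}\otimes\Q$, extended $\Gamma_F$-equivariantly. Under this identification, the corresponding Hitchin total spaces carry an action of the same regular centralizer group scheme up to rescaling, and therefore their generic Hitchin fibers carry matching symmetries. The heart of the argument is then a comparison at the level of perverse sheaves: the stable summand of the direct image on the Chaudouard--Laumon compactification for $G_1$ should be identified with the one for $G_2$ up to the scalar $c^{G_1,G_2}_{M_1,M_2}$. This scalar encodes precisely the discrepancy between the $\Gamma_F$-invariants of the coroot lattices $R_2^{\Gamma_F}$ and $j_*(R_1^{\Gamma_F})$, which controls the relative size of the component groups of the regular centralizers that index the stable--unstable decomposition.

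The main obstacle will be the weighted part, that is, lifting Ngô's support theorem and its refinement by Chaudouard--Laumon to the non-standard comparison: one needs to track the combinatorics of the boundary strata of the compactified Hitchin fibration under the rescaling $j_*$, and check that the multiplication by the functions $\check{b}\colon \check{\Sigma}_1 \to \Q_{>0}\cap\Z_p^\times$ and $b\colon\Sigma_2\to\Q_{>0}\cap\Z_p^\times$ does not disturb the $\ell$-adic cohomological comparison; the hypothesis that $p$ be sufficiently large is critical here to avoid $p$-torsion pathologies. A secondary technical point is the treatment of the Levi $M_i$: one must verify that the $j_*$-correspondence of semi-standard Levis induces the right inclusion $W^{G_2}(M_2)\hookrightarrow W^{G_1}(M_1)$ (in fact an equality here) at the geometric level, so that the weight factor defined via a $W^{G_i}(M_i)$-invariant quadratic form on $\mathfrak{a}_{M_i}$ descends through the Hitchin comparison. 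Once these ingredients are in place, the equality $s^{G_1}_{M_1}(Y_1)=c^{G_1,G_2}_{M_1,M_2}\,s^{G_2}_{M_2}(Y_2)$ follows by evaluating the resulting identity of stable perverse sheaves at the point of the Hitchin base determined by the common characteristic polynomial of $Y_1$ and $Y_2$.
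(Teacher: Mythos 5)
The statement \ref{prop:LF-nonstandard} is not proved in this paper: it is deliberately stated as a \emph{Conjecture} (attributed to \cite{Wa09}), and the paper's main theorem \ref{prop:LF-pondere} is explicitly conditional on it. The text only records the expectation that the geometric method of Chaudouard--Laumon will eventually yield it, and notes separately that when $\dim W = 2$ the relevant non-standard identity can be checked by hand. So there is no proof in the paper for your proposal to be compared against; what you have written is a research program in precisely the direction the paper anticipates, not an argument that can be checked.

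Judged on its own terms, the sketch leaves the essential step unsubstantiated. You reduce to the elementary pairs (chiefly $(\Sp(2n),\Spin(2n+1))$), invoke the change-of-characteristic method of \cite{Wa09-IF}, and then assert that the stable summand of the direct image on the Chaudouard--Laumon compactification for $G_1$ ``should be identified'' with that for $G_2$ up to the scalar $c^{G_1,G_2}_{M_1,M_2}$. But that identification, together with its compatibility with the weighted/compactified strata and with the rescalings $b,\check b$, is exactly the content of the conjecture; presenting it with ``should'' is naming the problem, not solving it. Moreover, \cite{Wa09-IF} is cited in the paper only for the standard weighted fundamental lemma \ref{prop:LF-pondere-alg}; whether the same characteristic-$0$-to-characteristic-$p$ reduction applies verbatim to the non-standard comparison (where the two sides live on genuinely different groups linked by a rational rescaling $j_*$) would itself need an argument. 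In short: there is no gap relative to the paper, because the paper never claims a proof; but your proposal, as a candidate proof, has its central comparison of stable perverse sheaves and its boundary-strata bookkeeping entirely unproved.
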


\paragraph{Passage au quotient}
Pour $i=1,2$, soit $\pi_i: G_i \to \underline{G}_i$ une isogénie centrale; notons $\underline{M}_i := \pi_i(M_i)$. Cela n'affecte pas les algèbres de Lie, les espaces $\mathfrak{a}_{M_i}$ et les groupes de Weyl. La correspondance de classes marche pour $\underline{G}_i, \underline{M}_i$ au lieu de $G_i, M_i$.

\begin{corollary}\label{prop:LF-nonstandard-SO}
  Supposons vérifiée \ref{prop:LF-nonstandard}. Soient $\underline{Y}_1 \in \mathfrak{m}_{1,\underline{G}_1-\mathrm{reg}}(F)$ et $\underline{Y}_2 \in \underline{\mathfrak{m}}_{2,\underline{G}_2-\mathrm{reg}}(F)$ qui se correspondent. Alors
  $$ s^{\underline{G}_1}_{\underline{M}_1}(\underline{Y}_1) = c^{G_1,G_2}_{M_1,M_2} s^{\underline{G}_2}_{\underline{M}_2}(\underline{Y}_2). $$
\end{corollary}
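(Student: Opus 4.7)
Le plan est de relever l'énoncé à la conjecture \ref{prop:LF-nonstandard} pour les groupes simplement connexes $G_1, G_2$ via les isogénies centrales $\pi_i: G_i \to \underline{G}_i$. Chaque $\pi_i$ induit une identification d'algèbres de Lie $\mathfrak{g}_i \rightiso \underline{\mathfrak{g}}_i$, qui se restreint en $\mathfrak{m}_i \rightiso \underline{\mathfrak{m}}_i$, ainsi qu'une identification des paires de Borel et des Lévi semi-standards, et finalement $\mathfrak{a}_{M_i} = \mathfrak{a}_{\underline{M}_i}$. Sous ces identifications, la correspondance de classes géométriques régulières donnée par $j_*$ n'est pas affectée, car elle n'est formulée qu'en termes des données radicielles $(X_*(T_i), \Sigma_i, \check{\Sigma}_i)$, qui sont intrinsèques aux algèbres de Lie. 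Ainsi, $\underline{Y}_i$ se relève en un élément $Y_i \in \mathfrak{m}_{i, G_i-\text{reg}}(F)$ et les $Y_1, Y_2$ se correspondent au sens de \ref{prop:LF-nonstandard}.

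Le point clé à établir est l'invariance suivante sous isogénie centrale: pour toute isogénie centrale $\pi: G \to \underline{G}$ de $F$-groupes réductifs non ramifiés, tout Lévi $M$ d'image $\underline{M}$, et tout $Y \in \mathfrak{m}_{G-\text{reg}}(F)$ s'identifiant à $\underline{Y}$, on a $s^G_M(Y) = s^{\underline{G}}_{\underline{M}}(\underline{Y})$. Ceci repose sur trois constats: (i) la fonction poids $v_M$ ne dépend que de $\mathfrak{a}_M$ et d'un sous-groupe hyperspécial, et les hyperspéciaux se correspondent via $\pi$; (ii) pour $Y$ régulier semi-simple, le commutant connexe $T := G_Y$ est un tore maximal, et $\pi$ en induit une isogénie centrale $T \to \underline{T} := \underline{G}_{\underline{Y}}$, sous laquelle les mesures canoniques issues du modèle lisse de Bruhat-Tits diffèrent d'un facteur contrôlé par $H^1(F, \ker\pi)$; (iii) la stabilisation, i.e. la somme sur les classes de conjugaison rationnelles dans une classe stable donnée, fait apparaître précisément ce facteur, qui se compense donc dans la fonction stabilisée.

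Une fois cette invariance acquise, on applique \ref{prop:LF-nonstandard} à $(G_1, G_2, j_*)$ et aux relèvements $Y_1, Y_2$: on obtient $s^{G_1}_{M_1}(Y_1) = c^{G_1,G_2}_{M_1,M_2} s^{G_2}_{M_2}(Y_2)$. Le coefficient $c^{G_1,G_2}_{M_1,M_2}$ défini en \eqref{eqn:coef-nonstandard-general} ne dépend que des réseaux de coracines $R_i, R^{M_i}$, intrinsèques aux données radicielles; il coïncide donc avec l'analogue pour $\underline{G}_i, \underline{M}_i$. En substituant $s^{G_i}_{M_i}(Y_i) = s^{\underline{G}_i}_{\underline{M}_i}(\underline{Y}_i)$, le corollaire s'ensuit.

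Le principal obstacle est la justification soignée des étapes (ii) et (iii): il s'agit d'un argument cohomologique à la Kottwitz comparant les tores maximaux de $G$ et $\underline{G}$ via la suite exacte de $\ker\pi$. Ce type de calcul est standard dans la théorie de l'endoscopie (on peut l'extraire des travaux de Waldspurger cités, notamment \cite{Wa08}), mais il mériterait d'être explicité pour une preuve complète et rigoureuse.
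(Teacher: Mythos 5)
Your proposal follows the same strategy as the paper: lift $\underline{Y}_i$ to $Y_i$ along the central isogenies $\pi_i$, apply Conjecture~\ref{prop:LF-nonstandard} to the simply connected groups, then use the invariance of the stabilized function under central isogeny, namely $s^{G_i}_{M_i}(Y_i) = s^{\underline{G}_i}_{\underline{M}_i}(\underline{Y}_i)$. The paper, however, disposes of this last point in a single line by citing Waldspurger's result (\cite{Wa09}~5.7), whereas you attempt to re-derive it from scratch.

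That attempted re-derivation is where the gap lies, and you acknowledge it yourself. Your steps (ii)--(iii) are a heuristic: the stabilized functions $s^L_R$ are not simply sums of orbital integrals over rational classes in a stable class; they are defined inductively (in the style of Arthur) by subtracting endoscopic contributions $i_{R^\Endo}(L,L[s])\,s^{L[s]}_{R^\Endo}$, and verifying that the $H^1(F,\ker\pi)$ factor ``se compense'' across the whole inductive scheme (weight functions, measures on tori, and the endoscopic descent terms) is precisely what \cite{Wa09}~5.7 establishes. Sketching it as a ``standard Kottwitz-type argument'' without tracking how the correction propagates through the induction is not a proof. On the other hand, your remark that the correspondence of regular classes and of Levi subgroups is insensitive to the isogeny is correct and needed; your remark that the coefficient $c^{G_1,G_2}_{M_1,M_2}$ is intrinsic to the root data is correct but superfluous here, since the statement already refers to the constant attached to the simply connected groups. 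The fix is simply to replace your steps (i)--(iii) by the citation to \cite{Wa09}~5.7.
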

\begin{proof}
  Pour $i=1,2$, soit $Y_i \in \mathfrak{m}_i(F)$ l'élément qui s'envoie sur $\underline{Y}_i$, alors il suffit de noter que $s^{G_i}_{M_i}(Y_i) = s^{\underline{G}_i}_{\underline{M}_i}(\underline{Y}_i)$, ce qui résulte de \cite{Wa09} 5.7.
\end{proof}

La convention suivante sera commode. Soient $G_i, M_i$ ($i=1,2$) tels que $M_i$ est un Lévi de $G_i$ et que les groupes $G_{i,\text{SC}}, M_{i, \text{sc}}$ sont comme dans le lemme fondamental pondéré non standard \ref{prop:LF-nonstandard}. Nous noterons
\begin{gather}\label{eqn:c-SO}
  c^{G_1, G_2}_{M_1, M_2} = c^{G_{1,\text{SC}},G_{2,\text{SC}}}_{M_{1,\text{sc}}, M_{2,\text{sc}}}.
\end{gather}

\paragraph{Un exemple}
Supposons $n \geq 1$. Fixons un ensemble fini $I$, $(n_i)_{i \in I} \in \Z_{\geq 1}^I$, $m \in \Z_{\geq 0}$ et posons $n := m + \sum_{i \in I} n_i$. Posons
\begin{align*}
  G_1 & := \Sp(2n), &
  \underline{G}_2 & := \SO(2n+1),\\
  M_1 & := \prod_{i \in I} \GL(n_i) \times \Sp(2m),&
  \underline{M}_2 & := \prod_{i \in I} \GL(n_i) \times \SO(2m+1),
\end{align*}
Regardons $M_1$ comme un sous-groupe de Lévi de $G_1$ en choisissant un plongement, qui est unique à conjugaison près par $G_1(F)$. Idem pour $\underline{G}_2$ et $\underline{M}_2$. Notons $\pi: G_2 \to \underline{G}_2$ le revêtement simplement connexe, c'est-à-dire $G_2 = \Spin(2n+1)$, et posons $M_2 := \pi^{-1}(\underline{M}_2)$, alors $M_2$ et un sous-groupe de Lévi de $G_2$.

Comme indiqué dans \cite{Li09} \S 8.2, on peut choisir des $F$-tores maximaux déployés $T_1$, $T_2$ dans $M_1$, $M_2$ respectivement, avec $\underline{T}_2 := \pi(T_2)$, de sorte qu'il existe une base $\{e_1, \ldots, e_n\}$ de $X_*(T_1)$ telle que l'on a des identifications
\begin{align*}
  X_*(T_1) & = \bigoplus_{k=1}^n \Z e_k, \\
  X_*(T_2) & = \left\{ \sum_{k=1}^n r_k e_k : \sum_{k=1}^n r_k \equiv 0 \mod 2 \right\},\\
  X_*(\underline{T}_2) & = \bigoplus_{k=1}^n \Z e_k .
\end{align*}
En choisissant des sous-groupes de Borel convenables, les coracines se décrivent comme suit.
\begin{align*}
  \check{\Sigma}_1 & = \{ \pm e_i \pm e_j : 1 \leq i \neq j \leq n \} \sqcup \{\pm e_i : 1 \leq i \leq n \}, \\
  \check{\Sigma}_2 & = \{ \pm e_i \pm e_j : 1 \leq i \neq j \leq n \} \sqcup \{\pm 2e_i : 1 \leq i \leq n \}.
\end{align*}

Alors l'inclusion $X_*(T_2) \hookrightarrow X_*(\underline{T}_2)$ correspond à l'isogénie $\pi: G_2 \to \underline{G}_2$.

On prend $j_* := \identity: X_*(T_1) \otimes \Q \rightiso X_*(T_2) \otimes \Q$. On prend les fonctions $\tau$, $\check{\tau}$, $b$, $\check{b}$ qui fournissent un triplet non standard non ramifié $(G_1, G_2, j_*)$, pour laquelle $M_1$ et $M_2$ sont des sous-groupes de Lévi semi-standards qui se correspondent.

Remarquons que le choix des divers identifications peut affecter $j_*$, mais la correspondance de classes et la correspondance de mesures de Haar sur $\mathfrak{a}_{M_1}$ et $\mathfrak{a}_{M_2}$ ne changent pas.

C'est plus pratique de travailler avec $\underline{G}_2$ et $\underline{M}_2$. La correspondance des classes géométriques entre $\mathfrak{g}_{1,\text{reg}}(F)$ et $\underline{\mathfrak{g}}_{2,\text{reg}}(F)$ est la correspondance par valeurs propres. Pour $\mathfrak{m}_1$ et $\underline{\mathfrak{m}}_2$, c'est la même correspondance pour les facteurs $\syp(2m)$ et $\so(2m+1)$; pour les facteurs $\gl(\cdot)$, c'est la correspondance tautologique.

Le coefficient de cette donnée endoscopique non standard se calculent comme suit.
\begin{proposition}\label{prop:coef-nonstandard}
  Pour cette donnée,
  $$
    c^{G_1,\underline{G}_2}_{M_1, \underline{M}_2} = c^{G_1,G_2}_{M_1,M_2} := \begin{cases}
      1, & \text{si } m \neq 0, \\
      \frac{1}{2}, & \text{si } m=0.
    \end{cases}
  $$
\end{proposition}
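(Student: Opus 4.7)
Le plan consiste à calculer directement les deux indices apparaissant dans \eqref{eqn:coef-nonstandard-general} à partir des descriptions explicites de $X_*(T_1)$, $X_*(T_2)$ et des coracines données juste avant l'énoncé. Comme $T_1$ et $T_2$ sont déployés et que $j_*=\identity$, l'action de $\Gamma_F$ est triviale sur toutes les réseaux en jeu, donc les invariants galoisiens coïncident avec les réseaux eux-mêmes. Par la convention \eqref{eqn:c-SO}, il suffit de calculer $c^{G_1,G_2}_{M_1,M_2}$ pour $G_2 = \Spin(2n+1)$ et $M_2 = \pi^{-1}(\underline{M}_2)$.

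La première étape est le calcul des réseaux $R_1$ et $R_2$ engendrés respectivement par $\check{\Sigma}_1$ et $\check{\Sigma}_2$. D'après les descriptions rappelées, on trouve $R_1 = \bigoplus_{k=1}^n \Z e_k = X_*(T_1)$ et $R_2 = \{\sum_k r_k e_k : \sum_k r_k \equiv 0 \pmod 2\} = X_*(T_2)$. En particulier, $R_2 \subset R_1$ est d'indice $2$; vu la convention $[H_1:H_2] := [H_1 : H_1 \cap H_2][H_2 : H_1 \cap H_2]^{-1}$, on en déduit
$$ [R_2^{\Gamma_F} : j_*(R_1^{\Gamma_F})] = [R_2 : R_2][R_1 : R_2]^{-1} = \tfrac{1}{2}. $$

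Il faut ensuite faire le calcul analogue pour les Lévis. Notons $I_1,\ldots$ la partition correspondante de $\{1,\ldots,n\}$ en blocs $I_i$ (de taille $n_i$) et un bloc $I_\flat$ de taille $m$. Les coracines de $M_1$ restreintes à chaque facteur $\GL(n_i)$ sont $\{e_k - e_l : k,l \in I_i\}$ et celles du facteur $\Sp(2m)$ sont $\{\pm e_k \pm e_l, \pm e_k : k, l \in I_\flat\}$, d'où
$$ R^{M_1} = \bigoplus_{i \in I} \Bigl\{ \sum_{k \in I_i} r_k e_k : \sum r_k = 0 \Bigr\} \oplus \bigoplus_{k \in I_\flat} \Z e_k. $$
De même, les coracines du facteur $\Spin(2m+1)$ dans $M_2$ étant $\{\pm e_k \pm e_l, \pm 2 e_k : k,l \in I_\flat\}$, on a
$$ R^{M_2} = \bigoplus_{i \in I} \Bigl\{ \sum_{k \in I_i} r_k e_k : \sum r_k = 0 \Bigr\} \oplus \Bigl\{ \sum_{k \in I_\flat} r_k e_k : \sum r_k \equiv 0 \pmod 2 \Bigr\}. $$
Les parties provenant des blocs $\GL$ coïncident donc exactement. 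Si $m \neq 0$, le bloc en $I_\flat$ de $R^{M_2}$ est d'indice $2$ dans celui de $R^{M_1}$, donc $[R^{M_2} : j_*(R^{M_1})] = 1/2$. Si $m=0$, ce bloc disparaît et $R^{M_1} = R^{M_2}$, d'où $[R^{M_2} : j_*(R^{M_1})] = 1$.

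La conclusion est alors immédiate par substitution dans \eqref{eqn:coef-nonstandard-general}: le rapport vaut $(1/2)/(1/2) = 1$ si $m \neq 0$, et $(1/2)/1 = 1/2$ si $m=0$. La seule difficulté est essentiellement comptable — il faut veiller à la convention sur $[H_1:H_2]$ pour sous-groupes commensurables (qui autorise des valeurs rationnelles) et à l'identification précise des réseaux de coracines, particulièrement à la distinction entre la coracine courte $\pm e_i$ de type $C$ et la coracine longue $\pm 2 e_i$ de type $B$ qui est la source de tous les facteurs $2$ du calcul.
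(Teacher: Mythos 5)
Your proof is correct and follows essentially the same route as the paper's: compute the coroot lattices $R_1, R_2$ (obtaining the index $1/2$ from the type $C$ vs. type $B$ coroot distinction), then the Lévi coroot lattices $R^{M_1}, R^{M_2}$ (where the $\GL$ blocks contribute trivially and the $\Sp$/$\Spin$ block again contributes $1/2$ unless it is absent, i.e. $m=0$), and divide. The paper states these facts more tersely; your block-by-block write-up of $R^{M_1}$ and $R^{M_2}$ merely makes explicit the same computation, so there is no gap or divergence of method.
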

\begin{proof}
  Les coracines dans $\check{\Sigma}_1$ et $\check{\Sigma}_2$ engendrent respectivement les réseaux $X_*(T_1)$ et $X_*(T_2)$. Vu les identifications ci-dessus et le choix $j_* = \identity$, on voit que $[R_2^{\Gamma_F} : j_*(R_1^{\Gamma_F})]=\frac{1}{2}$. Soient $M_1, M_2$ des sous-groupes de Lévi semi-standards qui se correspondent. Alors il en est de même pour $[R^{M_2, \Gamma_F} : j_*(R^{M_1,\Gamma_F})]$ sauf si le facteur $\Sp$ (resp. $\SO$) n'apparaît plus dans $M_1$ (resp. $\underline{M}_2$) i.e. sauf si $m=0$, auquel cas on a trivialement $[R^{M_2, \Gamma_F} : j_*(R^{M_1,\Gamma_F})]=1$. Cela permet de conclure.
\end{proof}

\begin{remark}\label{rem:ns-a}
  Explicitons l'isomorphisme $\mathfrak{a}_{M_1} \rightiso \mathfrak{a}_{M_2} = \mathfrak{a}_{\underline{M}_2}$ induit par $j_*$. On peut se limiter au cas $M_1=T_1$ et $M_2=T_2$. On a identifié $T_1$ et $\underline{T}_2$ dans la description de $X_*(T_i)$ ($i=1,2$). Comme $j_* = \identity$, il induit l'application $\identity: \mathfrak{a}_{T_1} \rightiso \mathfrak{a}_{\underline{T}_2}$. Le cas général en découle: si l'on identifie les composantes $\prod_{i \in I} \GL(n_i)$ dans $M_1$ et $\underline{M}_2$, alors $\mathfrak{a}_{M_1} \rightiso \mathfrak{a}_{\underline{M}_2}$ est encore l'identité.
\end{remark}

\section{Descente des données endoscopiques}\label{sec:descente-donnees}
Le formalisme est celui du \S\ref{sec:int-orb}, mais ici les revêtements métaplectiques ne nous concernent pas. Désormais, fixons des éléments $\eta \in M(F)_\text{ss}$, $\epsilon \in M^\Endo(F)_\text{ss}$ qui se correspondent. Supposons de plus que $\eta$ et $\epsilon$ sont d'ordre finis premiers à $p$. Nous nous placerons sous l'une des deux hypothèses suivantes.

\begin{itemize}
  \item[\textbf{(A)}] $M^\Endo_\epsilon$ est quasi-déployé.
  \item[\textbf{(B)}] $M_\eta$ et $M^\Endo_\epsilon$ sont non ramifiés.
\end{itemize}

L'hypothèse (B) est plus forte que (A). En tout cas, écrivons $\eta = ((\eta_i)_{i \in I}, \eta^\flat)$ et $\epsilon = ((\epsilon_i)_{i \in I}, \epsilon^\flat)$. Quitte à conjuguer $\eta$ et $\epsilon$, on peut supposer que $\eta_i=\epsilon_i$ pour tout $i \in I$.

\subsection{Paramétrage}\label{sec:parametrage}
Plaçons-nous sous l'hypothèse (A). Rappelons la paramétrisation des classes de conjugaison semi-simples dans \cite{Li09} \S 3 et \S 7.1. À la classe $\mathcal{O}^M(\eta)$ sont associées $(\mathcal{O}^{\GL(n_i)}(\eta_i))_{i \in I}$ et les données
\begin{itemize}
  \item $K$: une $F$-algèbre étale de dimension finie;
  \item $\tau:$ une involution non triviale de $K$, qui est déterminée par la sous-algèbre fixée $K^\# := K^{\tau=\identity}$;
  \item $a \in K^\times$ est tel que $N_{K/K^\#}(a) := a\tau(a)=1$;
  \item $(W_K,h_K)$: une $(K,K^\#)$-forme anti-hermitienne, où on suppose que $W_K$ est un $K$-module fidèle;
  \item $(W_\pm,\angles{|}_\pm)$: deux $F$-espaces symplectiques.
\end{itemize}
Ces données sont soumises à la condition $\dim_F W_K + \dim_F W_+ + \dim_F W_- = 2m$. Elles sont uniques modulo la notion d'équivalence évidente. L'élément $\eta$ se réalise comme l'opérateur $(w \mapsto aw, +\identity,-\identity)$ dans l'espace $W_K \oplus W_+ \oplus W_-$.

Écrivons $\SO(2m'+1)=\SO(V',q')$ et $\SO(2m''+1)=\SO(V'', q'')$ en choisissant des $F$-espaces quadratiques convenables. À $\mathcal{O}^{M^\Endo}(\epsilon)$ sont associées $(\mathcal{O}^{\GL(n_i)}(\epsilon_i))_{i \in I}$ et les données
\begin{itemize}
  \item $K',{K'}^\#$: $F$-algèbres étales comme précédemment;
  \item $a' \in {K'}^\times$ est tel que $N_{K'/K'^\#}(a)=1$;
  \item $(V'_K,h'_K)$: une $(K',K'^\#)$-forme hermitienne, où on suppose que $V'_K$ est un $K'$-module fidèle;
  \item $(V'_\pm, q'_\pm)$: deux $F$-espaces quadratiques;
  \item des objets similaires $(K'',K''^\#)$, $a''$, $(V''_K,h''_K)$, $(V''_\pm, q''_\pm)$.
\end{itemize}
Ces données sont soumises aux conditions
\begin{gather*}
  (\Tr_{K'/F})_*(V'_K,h'_K) \oplus (V'_+,q'_+) \oplus (V'_-,q'_-) \simeq (V',q'), \\
  (\Tr_{K''/F})_*(V''_K,h''_K) \oplus (V''_+,q''_+) \oplus (V''_-,q''_-) \simeq (V'', q''),\\
  \dim V'_+ \equiv \dim V''_+  \equiv 1 \mod 2, \\
  \dim V'_- \equiv \dim V''_-  \equiv 0 \mod 2.
\end{gather*}
Elles sont uniques modulo la notion d'équivalence évidente. Si l'on note $\eta=(\eta',\eta'')$, alors $\eta'$ se réalise comme l'opérateur $(v' \mapsto a'v', +\identity,-\identity)$ dans l'espace $V'_K \oplus V'_+ \oplus V'_-$, et $\eta''$ comme $(v'' \mapsto a''v'', +\identity, -\identity)$ dans $V''_K \oplus V''_+ \oplus V''_-$. La correspondance entre $\eta$ et $\epsilon$ entraîne que
\begin{gather*}
  (K',K'^\#,a') \times (K'',K''^\#, -a'') \simeq (K,K^\#,a),\\
  W_K \simeq V'_K \oplus V''_K \quad \text{comme } K-\text{modules},\\
  \dim_F W_+ + 1 = \dim_F V'_+ + \dim_F V''_- , \\
  \dim_F W_- + 1 = \dim_F V'_- + \dim_F V''_+ ,
\end{gather*}
où le produit de triplets dans la première condition est pris au sens évident.

On sait aussi décrire les commutants connexes.
\begin{align*}
  M_\eta & = \prod_{i \in I} \GL(n_i)_{\eta_i} \times \U_{K/K^\#}(W_K,h_K) \times \Sp(W_+) \times \Sp(W_-), \\
  M^\Endo_\epsilon & = \prod_{i \in I} \GL(n_i)_{\epsilon_i} \\
  & \times \U_{K'/K'^\#}(V'_K,h'_K) \times \U_{K''/K''^\#}(V''_K, h''_K) \\
  & \times \SO(V'_+, q'_+) \times \SO(V''_-,q''_-) \\
  & \times \SO(V''_+, q''_+) \times \SO(V'_-,q'_-).
\end{align*}
D'après nos hypothèses, $\SO(V'_+, q'_+)$ et $\SO(V''_+, q''_+)$ sont déployés.

\begin{definition}\label{def:bar}
  Soit $L$ un $F$-groupe réductif admettant une décomposition
  $$L = L_0 \times \prod_a \SO(2a+1),$$
  où $a$ parcourt des entiers positifs et $L_0$ ne contient aucun facteur direct de type $\SO$ impair déployé. On notera
  $$ \bar{L} := L_0 \times \prod_a \Sp(2a). $$

  Les groupes que l'on rencontrera dans cette section sont tous de ce type. Remarquons aussi que $Z_{\widehat{\overline{L}}}^{\Gamma_F} \hookrightarrow Z_{\hat{L}}^{\Gamma_F}$, $Z_{\widehat{\overline{L}}}^{\Gamma_F,0} \rightiso Z_{\hat{L}}^{\Gamma_F,0}$, et $\mathfrak{a}_L = \mathfrak{a}_{\bar{L}}$.
\end{definition}

Avec cette convention, on a
\begin{align*}
  \overline{M^\Endo_\epsilon} & = \prod_{i \in I} \GL(n_i)_{\epsilon_i} \\
  & \times \U_{K'/K'^\#}(V'_K, h'_K) \times \U_{K''/K''^\#}(V''_K, h''_K) \\
  & \times \Sp(\overline{V'_+}) \times \SO(V''_-,q''_-) \\
  & \times \Sp(\overline{V''_+}) \times \SO(V'_-,q'_-),
\end{align*}
où $\overline{V'_+}$, $\overline{V''_+}$ ont les bonnes dimensions et sont munis de formes symplectiques.

\subsection{Des nouvelles données endoscopiques}\label{sec:nouvelles-donnees}
Conservons toujours l'hypothèse (A) et les notations précédentes. On munit $M_\eta$ d'une donnée de L-groupe de sorte que le tore maximal $\hat{T}$ dans $\widehat{M_\eta}$ est ce que l'on a fixé dans la donnée de L-groupe pour $M$. Les actions galoisiennes sur $\hat{T}$ héritées de $\widehat{M_\eta}$ et de $\hat{M}$ peuvent différer par un $1$-cocycle $\Gamma_F \to N_{\hat{M}}(\hat{T})$. Toutefois l'inclusion $Z_{\hat{M}} \hookrightarrow Z_{\widehat{M_\eta}}$ est $\Gamma_F$-équivariante et est indépendante de tout choix. On a aussi $\mathfrak{a}_M \hookrightarrow \mathfrak{a}_{M_\eta}$.

Vu \ref{ex:symplectique}, \ref{ex:unitaire} et \ref{rem:non-ell}, $\overline{M^\Endo_\epsilon}$ est un sous-groupe endoscopique de $M_\eta$, ce qui détermine la donnée endoscopique $(\overline{M^\Endo_\epsilon}, \overline{\mathcal{M}}^\Endo_\epsilon, \bar{s}_0,\hat{\xi})$ à isomorphisme près. La donnée endoscopique $(\overline{M^\Endo_\epsilon}, \overline{\mathcal{M}}^\Endo_\epsilon, \bar{s}_0,\hat{\xi})$ est non ramifiée sous l'hypothèse (B).

\begin{lemma}
  Soient $X \in \mathfrak{m}_{\eta,\mathrm{reg}}(F)$, $Y \in \mathfrak{m}^\Endo_{\epsilon,\mathrm{reg}}(F)$. Soit $\bar{Y} \in \overline{\mathfrak{m}^\Endo_{\epsilon,\mathrm{reg}}}(F)$ tel que $Y$ et $\bar{Y}$ se correspondent par valeurs propres.

  Si $\delta = \exp(X)\eta \in M(F)$ et $\gamma = \exp(Y)\epsilon \in M^\Endo(F)$ sont des décompositions de Jordan topologiques, et si $\delta$ correspond à $\gamma$, alors $X$ et $\bar{Y}$ se correspondent via l'endoscopie décrite ci-dessus. 
\end{lemma}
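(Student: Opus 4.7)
Le plan est de se ramener à une vérification combinatoire factor par factor, essentiellement par comparaison de valeurs propres. Une première réduction élimine les composantes $\GL(n_i)$ communes à $M$ et $M^\Endo$, où l'endoscopie est tautologique: on peut donc supposer $M=\Sp(W)$ et $M^\Endo=\SO(V',q')\times \SO(V'',q'')$.

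Ensuite, je décomposerais $M_\eta$ et $\overline{M^\Endo_\epsilon}$ selon leurs facteurs issus du paramétrage du \S\ref{sec:parametrage}:
\begin{align*}
  M_\eta & = \U_{K/K^\#}(W_K,h_K)\times \Sp(W_+)\times \Sp(W_-), \\
  \overline{M^\Endo_\epsilon} & = \U_{K'/K'^\#}(V'_K,h'_K)\times \U_{K''/K''^\#}(V''_K,h''_K) \\
  & \qquad \times \Sp(\overline{V'_+})\times \SO(V''_-,q''_-) \times \Sp(\overline{V''_+})\times \SO(V'_-,q'_-).
\end{align*}
Le regroupement de ces facteurs en paires endoscopiques est dicté par la partition en espaces propres de $\eta$ et $\epsilon$: à $\U_{K/K^\#}(W_K)$ correspond $\U_{K'/K'^\#}(V'_K)\times \U_{K''/K''^\#}(V''_K)$ via $K\simeq K'\times K''$ et $a=(a',-a'')$, endoscopie unitaire (\ref{ex:unitaire}); à $\Sp(W_+)$ correspond $\Sp(\overline{V'_+})\times \SO(V''_-,q''_-)$, endoscopie symplectique (\ref{ex:symplectique}), la relation $\dim W_+ = \dim \overline{V'_+}+\dim V''_-$ provenant de la condition $\dim W_+ + 1 = \dim V'_+ + \dim V''_-$ du paramétrage; symétriquement pour $\Sp(W_-)\leftrightarrow \Sp(\overline{V''_+})\times \SO(V'_-,q'_-)$.

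Troisièmement, j'utiliserais la décomposition de Jordan topologique: $\eta$ étant d'ordre fini premier à $p$ et $X\in \mathfrak{m}_\eta$ topologiquement nilpotent, ils commutent et $X$ préserve les espaces propres de $\eta$. Les valeurs propres de $\delta=\exp(X)\eta$ sur l'espace propre $W_\mu$ de $\eta$ relatif à $\mu$ sont $\{\exp(\lambda)\mu : \lambda \text{ valeur propre de } X|_{W_\mu}\}$, et de même pour $\gamma, Y, \bar{Y}$. La correspondance $\delta\leftrightarrow \gamma$ étant définie par valeurs propres avec le twist $\gamma''\leftrightarrow -\gamma''$ (\S\ref{sec:donnees-meta}), elle se partitionne selon les espaces propres: la contribution à $\mu=+1$ vient des valeurs propres $+1$ de $\gamma'$ (soit $V'_+$) et $-1$ de $\gamma''$ (soit $V''_-$), compatible avec le couplage $\Sp(W_+)\leftrightarrow \Sp(\overline{V'_+})\times \SO(V''_-)$, et ainsi de suite. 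En prenant le logarithme sur chaque espace propre, la correspondance par valeurs propres de $X$ et $\bar{Y}$ sur chaque facteur résulte de celle de $\delta$ et $\gamma$, et coïncide par \ref{ex:symplectique} et \ref{ex:unitaire} avec l'endoscopie standard du facteur concerné.

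L'obstacle principal est comptable: il faut vérifier le bon accord du remplacement $V'_+\rightsquigarrow \overline{V'_+}$ (et son analogue pour $V''_+$) avec la correspondance par valeurs propres entre $Y$ et $\bar{Y}$. La valeur propre $0$ nécessairement présente dans $Y|_{\so(V'_+)}$ (car $\dim V'_+$ est impaire) n'apparaît plus dans $\bar{Y}|_{\syp(\overline{V'_+})}$: ceci est exactement l'hypothèse du lemme sur $\bar{Y}$, et est compatible avec la correspondance symplectique de \ref{ex:symplectique} qui, côté $\SO$ impair, fait précisément apparaître une valeur propre $0$ parasite. Une fois ces identifications dimensionnelles posées, la conclusion est immédiate.
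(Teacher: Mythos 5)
Your proof is correct, and the first step (reducing to the case without $\GL$ factors, where the endoscopy is tautological on those components) coincides exactly with the paper's. The paper then stops and simply cites \cite{Li09} for the remaining $I=\emptyset$ case; you instead unpack the underlying factor-by-factor eigenvalue argument. Your unpacking is accurate — the pairing $\Sp(W_+)\leftrightarrow\Sp(\overline{V'_+})\times\SO(V''_-)$ and its mirror, the dimension checks $\dim W_+ = \dim\overline{V'_+}+\dim V''_-$, the passage to logarithms on the eigenspaces of $\eta$ and $\epsilon$, and especially the bookkeeping of the forced zero eigenvalue of $Y$ on $\so(V'_+)$, which vanishes in $\bar{Y}$ on $\syp(\overline{V'_+})$ and corresponds to the discarded eigenvalue $1$ in the correspondence $\gamma\mapsto\delta$ — so in substance you are reconstructing the cited argument rather than taking a different route.
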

\begin{proof}
  Lorsque $I=\emptyset$, cela est démontré dans \cite{Li09}. Le cas général en découle car l'endoscopie est tautologique en la composante $\GL(n_i)_{\epsilon_i} = \GL(n_i)_{\eta_i}$, pour tout $i \in I$.
\end{proof}

D'après \ref{rem:non-ell}, il existe un Lévi $R$ de $M_\eta$, unique à conjugaison près par $M_\eta(F)$, tel que $\overline{M^\Endo_\epsilon}$ est un groupe endoscopique elliptique de $R$. Rappelons que $\hat{R}$ et $\widehat{M_\eta}$ partagent le même tore maximal $\hat{T}$. À isomorphisme près on peut supposer $\bar{s}_0 \in \hat{T}$, et $\bar{s}_0$ fait encore partie de cette donnée endoscopique elliptique pour $R$.

On a $R \neq M_\eta$ si et seulement si $\U_{K'/K'^\#}(V'_K,h'_K)$, $\U_{K''/K''^\#}(V''_K, h''_K)$, $\SO(V''_-,q''_-)$ ou $\SO(V'_-,q'_-)$ contiennent des facteurs $\GL_E(\cdot)$, où $E$ est une certaine extension finie de $F$. On absorbe ces $\GL_E(\cdot)$ supplémentaires en introduisant les objets $\bar{I} \supset I$, $\bar{K} \subset K$, $\bar{K}' \subset K'$, etc., et on écrit
\begin{align*}
  R & = \prod_{i \in \bar{I}} \GL_{F_i}(n_i)_{\eta_i} \times \U_{\bar{K}/\bar{K}^\#}(\bar{W}_K,\bar{h}_K) \times \Sp(\bar{W}_+) \times \Sp(\bar{W}_-), \\
  \overline{M^\Endo_\epsilon} & = \prod_{i \in \bar{I}} \GL(n_i)_{\epsilon_i} \\
  & \times \U_{\bar{K}'/\bar{K}'^\#}(V'_{\bar{K}}, h'_{\bar{K}}) \times \U_{\bar{K}''/\bar{K}''^\#}(V''_{\bar{K}}, h''_{\bar{K}}) \\
  & \times \Sp(\overline{V'_+}) \times \SO(\bar{V}''_-, \bar{q}''_-) \\
  & \times \Sp(\overline{V''_+}) \times \SO(\bar{V}'_-,\bar{q}'_-),
\end{align*}

Ici $F_i$ est une extension finie de $F$ pour tout $i \in \bar{I}$, $(\bar{K},\bar{K}^\#)$ est une $F$-algèbre étale à involution qui est un facteur direct de $(K,K^\#)$, et $(\bar{W}_K,\bar{h}_K)$ est le facteur direct correspondant de $(W_K,h_K)$; idem pour $(\bar{K}',\bar{K}'^\#)$, $(\bar{K}'',\bar{K}''^\#)$ etc. D'autre part $(\bar{V}''_-, \bar{q}''_-)=(\{0\},0)$ si $(V''_-,q''_-)$ est de dimension $2$ hyperbolique; sinon $(\bar{V}''_-, \bar{q}''_-)=(V''_-,q''_-)$. Idem pour $(\bar{V}'_-,\bar{q}'_-)$. Supposons comme d'habitude que $\eta_i = \epsilon_i$ pour tout $i \in \bar{I}$.

Rappelons que $\bar{s}_0 \in \hat{T}$ est l'élément faisant partie de la donnée endoscopique $(\overline{M^\Endo_\epsilon}, \overline{\mathcal{M}}^\Endo_\epsilon, \bar{s}_0,\hat{\xi})$ pour $M_\eta$. Décrivons-le. Soit $i \in \bar{I}$, écrivons
$$ \GL_{F_i}(n_i)_{\eta_i} = \prod_{j \in J_i} \GL_{F_{ij}}(n_{ij}) $$
où $J_i$ est un ensemble fini; pour tout $j \in J_i$, $F_{ij}$ est une extension finie de $F_i$. Écrivons
$$ J_i = J_i^U \sqcup J_i^+ \sqcup J_i^- $$
tel que $j \in J_i^\pm$ si et seulement si la composante dans $\GL_{F_{ij}}(n_{ij})$ de $\eta_i$ est $\pm 1$, sinon $j \in J_i^U$.

En regardant les descriptions de $R$ et de $\overline{M^\Endo_\epsilon}$, il s'ensuit que l'on peut écrire
$$ \bar{s}_0 = \left( (\bar{s}_0^{ij})_{\substack{i \in \bar{I}\\ j \in J_i}}, \bar{s}_0^U, \bar{s_0}^+, \bar{s_0}^- \right) \in \hat{R} $$
où
\begin{align*}
  \bar{s}_0^U \leftrightarrow & \text{le groupe endoscopique } \U_{\bar{K}'/\bar{K}'^\#}(V'_{\bar{K}}, h'_{\bar{K}}) \times \U_{\bar{K}''/\bar{K}''^\#}(V''_{\bar{K}}, h''_{\bar{K}}) \\
  & \text{pour } \U_{\bar{K}/\bar{K}^\#}(\bar{W}_K, \bar{h}_K),\\
  \bar{s}_0^+ \leftrightarrow & \text{le groupe endoscopique } \Sp(\overline{V'_+}) \times \SO(\bar{V}''_-,\bar{q}''_-) \\
  & \text{pour } \Sp(\bar{W}_+), \\
  \bar{s}_0^- \leftrightarrow & \text{le groupe endoscopique } \Sp(\overline{V''_+}) \times \SO(\bar{V}'_-,\bar{q}'_-) \\
  & \text{pour } \Sp(\bar{W}_-).
\end{align*}

Il y a aussi des restrictions sur $\bar{s}_0^{ij}$ où $i \in \bar{I} \setminus I$: joints avec $(\bar{s}_0^U, \bar{s}_0^+, \bar{s}_0^-)$, ils sont tels que
\begin{itemize}
  \item $\U_{K'/K'^\#}(V'_K, h'_K) \times \U_{K''/K''^\#}(V''_K, h''_K)$ est un groupe endoscopique pour $\U_{K/K^\#}(W_K, h_K)$,
  \item $\Sp(\overline{V'_+}) \times \SO(V''_-,q''_-)$ est un groupe endoscopique pour $\Sp(W_+)$,
  \item $\Sp(\overline{V''_+}) \times \SO(V'_-,q'_-)$ est un groupe endoscopique pour $\Sp(W_-)$.
\end{itemize}

Il reste à traiter les composantes $\bar{s}_0^{ij}$ où $i \in I$ et $j \in J_i$. La seule condition est que $\bar{s}_0^{ij}$ appartient au centre du dual de $\GL_{F_{ij}}(n_{ij})$. Faisons désormais le choix suivant
\begin{gather}\label{eqn:s_0^ij}
  \bar{s}_0^{ij} = \begin{cases}
    1, & \text{si } j \in J_i^+, \\
    -1, & \text{si } j \in J_i^-, \\
    1, & \text{si } j \in J_i^U.
\end{cases}
\end{gather}
Le symbole ``$-1$'' a un sens car $F_{ij}=F$ lorsque $j \in J_i^\pm$, auquel cas le dual de $\GL_{F_{ij}}(n_{ij})$ est $\GL(n_{ij},\C)$. Ce choix des $\bar{s}_0^{ij}$ sera justifié dans \ref{prop:compatibilite-bar}

\subsection{Rapport avec $\mathcal{E}_{M^\Endo}(\tilde{G})$}
Dans cette sous-section, nous nous plaçons sous l'hypothèse (B). Les groupes $M^\Endo_\epsilon$ et $M_\eta$ sont tous non ramifiés, donc les données endoscopiques décrites dans le  \S\ref{sec:nouvelles-donnees} sont non ramifiées.

Rappelons $s_0$ est un élément dans $\dualmeta{M}$ qui détermine l'endoscopie elliptique pour $\tilde{M}$, dont $M^\Endo$ est le groupe endoscopique associé. On peut supposer que
$$ s_0 = ((1)_{i \in I}, s_0^\flat) \in \prod_{i \in I} \GL(n_i,\C) \times \Sp(2m,\C). $$

Fixons maintenant $s = s_0 t \in \mathcal{E}_{M^\Endo}(\tilde{G})$, où $t \in Z_{\dualmeta{M}}^\Gred/Z_{\dualmeta{G}}^\Gred$ est de la forme
$$ t = ((t_i)_{i \in I}, 1) \in \prod_{i \in I} \{\pm 1\} \times \Sp(2m,\C), $$
comme dans \ref{prop:finitude-s}, auquel est associée une décomposition $I = I' \sqcup I''$ telle que $i \in I'$ (resp. $i \in I'')$ si $t_i=+1$ (resp. $t_i=-1$).

La recette du \S\ref{sec:parametrage} marche aussi pour $\eta \in G(F)$. Nous nous contentons de décrire $G_\eta$:
$$ G_\eta = \U_{\mathcal{K}/\mathcal{K}^\#}(W_\mathcal{K}, h_\mathcal{K}) \times \Sp(\mathcal{W}_+) \times \Sp(\mathcal{W}_-) $$
avec des notations compréhensibles. Il contient $M_\eta$ comme un sous-groupe de Lévi.

D'autre part, introduisons $\epsilon[s] \in M^\Endo(F)$, c'est encore d'ordre fini premier à $p$ et on a $M^\Endo_\epsilon = M^\Endo_{\epsilon[s]}$. D'après \ref{prop:torsion-correspondance}, $\epsilon[s]$ et $\eta$ se correspondent pour la donnée endoscopique déterminée par $s$. Reprenons la recette du \S\ref{sec:parametrage} pour écrire
\begin{align*}
  G[s]_{\epsilon[s]} & = \U_{\mathcal{K}'/\mathcal{K}'^\#}(V'_\mathcal{K}, h'_\mathcal{K}) \times \U_{\mathcal{K}''/\mathcal{K}''^\#}(V''_\mathcal{K},h''_\mathcal{K}) \\
  & \times \SO(\mathcal{V}'_+, \mathcal{Q}'_+) \times \SO(\mathcal{V}''_-, \mathcal{Q}''_-) \\
  & \times \SO(\mathcal{V}''_+, \mathcal{Q}''_+) \times \SO(\mathcal{V}'_-, \mathcal{Q}'_-).
\end{align*}
Et aussi
\begin{align*}
  \overline{G[s]_{\epsilon[s]}} & = \U_{\mathcal{K}'/\mathcal{K}'^\#}(V'_\mathcal{K}, h'_\mathcal{K}) \times \U_{\mathcal{K}''/\mathcal{K}''^\#}(V''_\mathcal{K},h''_\mathcal{K}) \\
  & \times \Sp(\overline{\mathcal{V}'_+}) \times \SO(\mathcal{V}''_-, \mathcal{Q}''_-) \\
  & \times \Sp(\overline{\mathcal{V}''_+}) \times \SO(\mathcal{V}'_-, \mathcal{Q}'_-).
\end{align*}
C'est un groupe endoscopique non ramifié de $G_\eta$ et il contient $\overline{M^\Endo_\epsilon}$ comme un sous-groupe de Lévi, l'inclusion étant bien déterminée à conjugaison près. Il fait partie d'une donnée endoscopique non ramifiée $(\overline{G[s]_{\epsilon[s]}}, \overline{\mathcal{G}[s]_{\epsilon[s]}}, \bar{s}, \hat{\xi})$ qui est unique à isomorphisme près. Indiquons que la condition \eqref{eqn:s_0^ij} sur $\bar{s}$ est vide dans ce cas.

On a déjà décrit $\bar{s}$ et $\bar{s}_0$ dans le \S\ref{sec:nouvelles-donnees}. Nous nous proposons d'exprimer $\bar{s}$ en termes de $t$ et $\bar{s_0}$. Rappelons que les données de L-groupes pour $M$ et $M_\eta$ sont choisies de sorte que $\hat{M}$ et $\widehat{M_\eta}$ partagent le même tore maximal $\hat{T}$, qui fait partie de la paire de Borel $\Gamma_F$-stable dans les données de L-groupe. L'inclusion $Z_{\hat{M}} \hookrightarrow Z_{\widehat{M_\eta}}$ est bien définie et $\Gamma_F$-équivariante. Faisons la même construction pour $G$ et $G_\eta$ par rapport au même tore maximal $\hat{T}$ de façon compatible. On a
\begin{gather}
  \bar{s}_0, \bar{s} \in \hat{T}.
\end{gather}

On obtient ainsi l'homomorphisme
\begin{gather}
  \tau: Z_{\dualmeta{M}}^\Gred/Z_{\dualmeta{G}}^\Gred \to Z_{\hat{M}}/Z_{\hat{G}} \to Z_{\widehat{M_\eta}}^{\Gamma_F}/Z_{\widehat{G_\eta}}^{\Gamma_F}.
\end{gather}

\begin{lemma}\label{prop:compatibilite-bar}
  Quitte à changer $\bar{s}$ par un isomorphisme des données endoscopiques, on a $\bar{s} = \bar{s}_0 \tau(t)$.
\end{lemma}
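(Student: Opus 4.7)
L'approche consistera à démontrer que $\bar{s}_0 \tau(t) \in \hat{T}$, vu comme élément de $\widehat{G_\eta}$, détermine la même donnée endoscopique non ramifiée pour $G_\eta$ que $\bar{s}$, à savoir celle dont le groupe endoscopique est $\overline{G[s]_{\epsilon[s]}}$. Par la classification effectuée dans les exemples \ref{ex:symplectique} et \ref{ex:unitaire}, appliquée facteur par facteur, une telle donnée endoscopique est unique à isomorphisme près; cela suffira à établir l'égalité cherchée modulo $Z_{\widehat{G_\eta}}^{\Gamma_F}$ et à conjugaison près.

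Je commencerai par décomposer $\hat{T}$ selon la factorisation de $\widehat{M_\eta}$ donnée au \S\ref{sec:nouvelles-donnees}: les facteurs $\widehat{\GL_{F_{ij}}(n_{ij})}$ (pour $i \in \bar{I}$, $j \in J_i$) d'un côté, les duaux $\widehat{\U_{\bar{K}/\bar{K}^\#}(\bar{W}_K,\bar{h}_K)}$, $\widehat{\Sp(\bar{W}_+)}$, $\widehat{\Sp(\bar{W}_-)}$ (étiquetés $U$, $+$, $-$) de l'autre. L'élément $t$ étant supporté sur les facteurs $\GL(n_i,\C)$ de $\dualmeta{M}$ avec $i \in I$, son image $\tau(t) \in Z_{\widehat{M_\eta}}^{\Gamma_F}$ aura pour composante le scalaire $t_i$ dans chaque $\widehat{\GL_{F_{ij}}(n_{ij})}$ avec $j \in J_i$, et sera triviale sur les composantes restantes. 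Par suite, les composantes $U$, $+$, $-$ de $\bar{s}_0 \tau(t)$ coïncideront avec celles de $\bar{s}_0$, donc \textit{a fortiori} avec celles de $\bar{s}$, puisque les deux descentes au voisinage de $\eta^\flat$ dans $\Sp(W^\flat)$ ne font intervenir que la paire $(\eta^\flat,\epsilon^\flat)$. Quant aux composantes $\widehat{\GL_{F_{ij}}}$, elles seront inchangées pour $i \in I'$ et multipliées par $-1$ pour $i \in I''$, d'après le choix \eqref{eqn:s_0^ij}.

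L'étape décisive sera d'interpréter ce changement de signes en termes du groupe endoscopique $\overline{G[s]_{\epsilon[s]}}$ de $G_\eta$. Dans $\widehat{G_\eta}$, les blocs $\widehat{\GL_{F_{ij}}}$ avec $j \in J_i^+$ (resp. $j \in J_i^-$) s'absorbent comme sous-groupe de Lévi du facteur $\widehat{\Sp(\mathcal{W}_+)}$ (resp. $\widehat{\Sp(\mathcal{W}_-)}$), et la valeur $\pm 1$ de la composante correspondante détermine dans quel facteur ($\Sp$ ou $\SO$) du groupe endoscopique attaché à ce $\Sp$ se loge le bloc en question. Pour $i \in I'$, la répartition est inchangée; pour $i \in I''$, le signe supplémentaire réaffecte le bloc au facteur opposé, ce qui reflète précisément le fait que $G[s] = \SO(2n'+1) \times \SO(2n''+1)$ regroupe les $\GL(n_i)$ selon que $i \in I'$ ou $i \in I''$. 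L'obstacle principal sera cette vérification combinatoire: il faudra l'établir en combinant la descente parabolique du facteur de transfert (\cite{Li09} 5.18) et la torsion $\epsilon \mapsto \epsilon[s]$ de \ref{prop:torsion-correspondance}, afin de s'assurer que le renversement imposé par $t_i = -1$ correspond exactement au changement de perspective entre ``Lévi d'un groupe endoscopique'' et ``groupe endoscopique d'un Lévi''. L'unicité à isomorphisme près mentionnée plus haut permettra alors de conclure.
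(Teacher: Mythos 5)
Votre démarche recoupe pour l'essentiel celle du texte : on décompose $\hat{T}$ et $\bar{s}$ selon les facteurs de $R$, on observe que $\tau(t)$ est trivial sur tout ce qui ne provient pas des $\GL(n_i,\C)$ avec $i\in I$, de sorte que les composantes $U$, $+$, $-$ et celles indexées par $\bar{I}\setminus I$ de $\bar{s}_0\tau(t)$ et de $\bar{s}$ coïncident (quitte à changer $\bar{s}$ par isomorphisme), et il ne reste qu'à comparer les composantes $\hat{T}^{ij}$ avec $i\in I$, là où la torsion $z[s]$ agit.

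Cependant, vous laissez précisément cette comparaison — que vous appelez vous-même « l'obstacle principal » — à l'état de programme : vous annoncez qu'il « faudra l'établir » sans l'effectuer, alors que c'est là tout le contenu du lemme. Le texte la traite par deux tableaux explicites couvrant chaque combinaison $(i\in I'$ ou $I''\,;\ j\in J_i^+$, $J_i^-$ ou $J_i^U)$. Votre heuristique ne considère que les blocs avec $j\in J_i^\pm$ qui s'absorbent dans $\Sp(\mathcal{W}_\pm)$, et omet entièrement le cas $j\in J_i^U$, où le bloc $\GL_{F_{ij}}(n_{ij})$ se plonge dans le facteur unitaire $\U_{\mathcal{K}/\mathcal{K}^\#}(W_\mathcal{K},h_\mathcal{K})$ de $G_\eta$ ; dans ce cas, le signe de $\bar{s}^{ij}$ sélectionne entre la partie $\mathcal{K}'$ et la partie $\mathcal{K}''$ du groupe endoscopique (et non entre un facteur $\Sp$ et un facteur $\SO$), et c'est exactement pour ce cas que le choix \eqref{eqn:s_0^ij} sur $J_i^U$ — $\bar{s}_0^{ij}=1$ — a été calibré. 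Enfin, la descente parabolique du facteur de transfert \cite{Li09} 5.18 n'intervient pas : la vérification repose uniquement sur la paramétrisation des classes semi-simples du \S\ref{sec:parametrage} et sur \ref{prop:torsion-correspondance}, en comparant la réalisation de $\epsilon[s]$ dans $M^\Endo$ à celle dans $G[s]$.
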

\begin{proof}
  La torsion $\epsilon \mapsto \epsilon[s]$ n'affecte que les composantes $\GL(n_i)$ de $M^\Endo$. En comparant la paramétrisation de $\epsilon[s]$ dans $M^\Endo$ et dans $G[s]$, on voit que l'inclusion $\overline{M_\epsilon^\Endo} \hookrightarrow \overline{G[s]_{\epsilon[s]}}$ est de la forme
  \begin{align*}
    \U_{K'/K'^\#}(V'_K, h'_K) & \hookrightarrow \U_{\mathcal{K}'/\mathcal{K}'^\#}(V'_\mathcal{K}, h'_\mathcal{K}), \\
    \U_{K''/K''^\#}(V''_K, h''_K) & \hookrightarrow \U_{\mathcal{K}''/\mathcal{K}''^\#}(V''_\mathcal{K}, h''_\mathcal{K}), \\
    \Sp(\overline{V'_+}) & \hookrightarrow \Sp(\overline{\mathcal{V}'_+}), \\
    \SO(V''_-, q''_-) & \hookrightarrow \SO(\mathcal{V}''_-, \mathcal{Q}''_-), \\
    \Sp(\overline{V''_+}) & \hookrightarrow \Sp(\overline{\mathcal{V}''_+}),\\
    \SO(V'_-, q'_-) & \hookrightarrow \SO(\mathcal{V}'_-, \mathcal{Q}'_-).
  \end{align*}
  Pour tout $i \in \bar{I} \setminus I$, ces flèches déterminent aussi les inclusions des composantes $\GL_{F_{ij}}(n_{ij})$ dans $\overline{G[s]_{\epsilon[s]}}$ . Décomposons $\hat{T}$ et $\bar{s}$ selon la décomposition de $R$
  \begin{align*}
    \hat{T} &= \prod_{\substack{i \in \bar{I},\\ j \in J_i}} \hat{T}^{ij} \times \hat{T}^U \times \hat{T}^+ \times \hat{T}^- ; \\
    \bar{s} &= ((\bar{s}^{ij})_{i,j}, \bar{s}^U, \bar{s}^+, \bar{s}^-).
  \end{align*}

  Les inclusions précédentes affirment que l'on peut supposer que, quitte à changer la donnée endoscopique de $G_\eta$ par un isomorphisme, on a
  \begin{gather*}
    \bar{s}^U = \bar{s}_0^U, \; \bar{s}^+ = \bar{s}_0^+,\; \bar{s}^- = \bar{s}_0^-, \\
    \bar{s}^{ij} = \bar{s}_0^{ij}, \quad \text{si } i \in \bar{I} \setminus I.
  \end{gather*}

  Ce sont exactement les composantes correspondantes de $\bar{s}_0\tau(t)$. Il reste donc à considérer les composantes dans $\hat{T}^{ij}$ où $i \in I$. Écrivons $\tau(t) = ((\tau(t)^{ij})_{i,j},1,1,1)$ suivant la décomposition ci-dessus de $\hat{T}$.

  Pour tout $(i,j)$, $\GL_{F_{ij}}(n_{ij})$ se plonge dans un unique facteur dans la décomposition de $\overline{G[s]_{\epsilon[s]}}$ selon $i$ et la restriction de $\epsilon[s]$ sur $\GL_{F_{ij}}(n_{ij})$. Soit $i \in I'$ , alors $\tau(t)^{ij}=1$ pour tout $j \in J_i$. Rappelons aussi la définition \eqref{eqn:s_0^ij}. Sur le facteur $\GL_{F_{ij}}(n_{ij})$, on a:
  \begin{center}\begin{tabular}{c|c|c|c|c|c}
    $j$ & $\epsilon$ & $\epsilon[s]$ & $\bar{s}_0^{ij}$ & inclusion & $\bar{s}^{ij}$ \\ \hline
    $J_i^+$ & $1$ & $1$ & $1$ & $\GL_{F_{ij}}(n_{ij}) \hookrightarrow \Sp(\overline{\mathcal{V}'_+})$ & $1$ \\
    $J_i^-$ & $-1$ & $-1$ & $-1$ & $\GL_{F_{ij}}(n_{ij}) \hookrightarrow \SO(\mathcal{V}'_-, \mathcal{Q}'_-)$ & $-1$ \\
    $J_i^U$ & $\neq \pm 1$ & $\neq \pm 1$ & 1 & $\GL_{F_{ij}}(n_{ij}) \hookrightarrow \U_{\mathcal{K}'/\mathcal{K}'^\#}(V'_\mathcal{K}, h'_\mathcal{K})$ & $1$
  \end{tabular}\end{center}

  Soit $i \in I''$, alors $\tau(t)^{ij}=-1$ pour tout $j \in J_i$. Sur le facteur $\GL_{F_{ij}}(n_{ij})$, on a:
  \begin{center}\begin{tabular}{c|c|c|c|c|c}
    $j$ & $\epsilon$ & $\epsilon[s]$ & $\bar{s}_0^{ij}$ & inclusion & $\bar{s}^{ij}$ \\ \hline
    $J_i^+$ & $1$ & $-1$ & $1$ & $\GL_{F_{ij}}(n_{ij}) \hookrightarrow \SO(\mathcal{V}''_-,\mathcal{Q}''_-)$ & $-1$ \\
    $J_i^-$ & $-1$ & $1$ & $-1$ & $\GL_{F_{ij}}(n_{ij}) \hookrightarrow \Sp(\overline{\mathcal{V}''_+})$ & $1$ \\
    $J_i^U$ & $\neq \pm 1$ & $\neq \mp 1$ & 1 & $\GL_{F_{ij}}(n_{ij}) \hookrightarrow \U_{\mathcal{K}''/\mathcal{K}''^\#}(V''_\mathcal{K}, h''_\mathcal{K})$ & $-1$
  \end{tabular}\end{center}
  où la colonne $\bar{s}^{ij}$ signifie un choix loisible des $\bar{s}^{ij} \in \hat{T}^{ij}$, ce que l'on fixe.

  On voit ainsi qu'en tout cas, on a $\bar{s}_0^{ij}\tau(t)^{ij}=\bar{s}^{ij}$.
\end{proof}

On peut aussi regarder $\tau(t)$ comme un élément dans $Z_{\hat{R}}^{\Gamma_F}/Z_{\widehat{G_\eta}}^{\Gamma_F}$. Ledit lemme permet d'appliquer la construction d'Arthur dans la situation
$$\xymatrix{
  & & G_\eta \\
  \overline{M^\Endo_\epsilon} \ar@{--}[rr]^{\text{endo.ell}} & & R \ar@{^{(}->}[u]
}$$
avec $\bar{s} = \bar{s}_0\tau(t) \in \bar{s}_0 Z_{\hat{R}}^{\Gamma_F}/Z_{\widehat{G_\eta}}^{\Gamma_F}$. On construit ainsi une donnée endoscopique non ramifiée pour $G_\eta$ déterminée par le groupe endoscopique $G_\eta[\bar{s}]$, qui est éventuellement non elliptique.

\begin{lemma}\label{prop:compatibilite-pousser}
  La construction d'Arthur utilisant $\bar{s}$ et la descente en $(\epsilon,\eta)$ fournissent la même donnée endoscopique pour $G_\eta$: on a $G_\eta[\bar{s}] \simeq \overline{G[s]_{\epsilon[s]}}$.
\end{lemma}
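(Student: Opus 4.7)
Le plan est de ramener l'assertion à \ref{prop:compatibilite-bar}, qui a déjà effectué le travail principal. La construction d'Arthur appliquée à la donnée elliptique non ramifiée $(\overline{M^\Endo_\epsilon}, \overline{\mathcal{M}}^\Endo_\epsilon, \bar{s}_0, \hat{\xi})$ pour $R$ et à l'élément $\bar{s} \in \bar{s}_0 Z_{\hat{R}}^{\Gamma_F}/Z_{\widehat{G_\eta}}^{\Gamma_F}$ fournit par définition une donnée endoscopique non ramifiée pour $G_\eta$ dont le groupe dual est $\widehat{G_\eta[\bar{s}]} := Z_{\widehat{G_\eta}}(\bar{s})^0$ et dont la L-donnée est $\mathcal{G}_\eta[\bar{s}] := \widehat{G_\eta[\bar{s}]} \cdot \hat{\xi}(\overline{\mathcal{M}}^\Endo_\epsilon)$. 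Parallèlement, la descente en $(\epsilon[s],\eta)$ relativement à l'endoscopie de $\tilde{G}$ déterminée par $s$ fournit la donnée $(\overline{G[s]_{\epsilon[s]}}, \overline{\mathcal{G}[s]_{\epsilon[s]}}, \bar{s}, \hat{\xi})$, non ramifiée pour $G_\eta$, dont l'élément semi-simple distingué est précisément $\bar{s}$ d'après \ref{prop:compatibilite-bar}.

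La comparaison des groupes duaux est alors immédiate. Le premier vaut $Z_{\widehat{G_\eta}}(\bar{s})^0$ par construction. Pour le second, $\bar{s}$ étant l'élément semi-simple de la donnée endoscopique, on a canoniquement $\widehat{\overline{G[s]_{\epsilon[s]}}} = Z_{\widehat{G_\eta}}(\bar{s})^0$ via le plongement $\hat{\xi}$ fixé. Les deux groupes duaux coïncident donc comme sous-groupes de $\widehat{G_\eta}$. La compatibilité des structures de L-groupes résulte ensuite du caractère non ramifié: les actions galoisiennes sur $Z_{\widehat{G_\eta}}(\bar{s})^0$ héritées de $\widehat{G_\eta}$ sont identiques dans les deux cas, grâce aux choix compatibles de paires de Borel $\Gamma_F$-stables contenant $\hat{T}$ faits au début du \S\ref{sec:nouvelles-donnees}, et le plongement $\hat{\xi}$ est de part et d'autre l'inclusion de $Z_{\widehat{G_\eta}}(\bar{s})^0$ dans $\widehat{G_\eta}$ prolongée par le même $\hat{\xi}$ sur $\overline{\mathcal{M}}^\Endo_\epsilon$, ce dernier apparaissant parce que $\overline{M^\Endo_\epsilon}$ se réalise comme un Lévi du groupe endoscopique dans les deux descriptions.

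L'obstacle principal est de nature bureaucratique: il faut vérifier que les plongements de Lévi $\overline{M^\Endo_\epsilon} \hookrightarrow G_\eta[\bar{s}]$ (issu de la construction d'Arthur) et $\overline{M^\Endo_\epsilon} \hookrightarrow \overline{G[s]_{\epsilon[s]}}$ (issu de la descente) se correspondent via l'identification des groupes duaux. Ce point se traite facteur par facteur, selon la décomposition de $\overline{G[s]_{\epsilon[s]}}$ en produit de groupes unitaires, symplectiques et orthogonaux, au moyen de \ref{ex:symplectique} et \ref{ex:unitaire}. Les deux tableaux de la démonstration de \ref{prop:compatibilite-bar}, détaillant les composantes $\bar{s}^{ij}$ suivant que $i \in I'$ ou $i \in I''$, fournissent tous les renseignements requis: chacune de ces composantes détermine le facteur de $\overline{G[s]_{\epsilon[s]}}$ qui contient $\GL_{F_{ij}}(n_{ij})$, et ce facteur est précisément le bloc correspondant extrait par la composante connexe du centralisateur de $\bar{s}$ dans $\widehat{G_\eta}$. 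La vérification est alors routinière et achève l'identification des données endoscopiques.
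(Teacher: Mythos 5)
Votre plan est essentiellement celui du papier: ramener l'assertion à \ref{prop:compatibilite-bar}, qui donne que les deux données endoscopiques pour $G_\eta$ sont attachées au même élément $\bar{s}\in\hat{T}$, puis exploiter le fait que $\overline{M^\Endo_\epsilon}$ est un Lévi commun des deux groupes endoscopiques pour conclure à l'isomorphisme. Là où le papier ferme l'argument en invoquant directement la remarque sur les noyaux anisotropes de \ref{ex:symplectique} (deux données non ramifiées avec le même $\bar{s}$ et le même Lévi ont nécessairement des facteurs $\SO$ de même noyau anisotrope, donc sont isomorphes), vous cherchez à identifier plus explicitement les structures de L-groupe. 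Il y a cependant une imprécision qu'il faut corriger: vous écrivez que « la compatibilité des structures de L-groupes résulte ensuite du caractère non ramifié », mais la condition non ramifiée seule ne fixe pas la forme quasi-déployée des facteurs $\SO$ — c'est précisément le Lévi commun $\overline{M^\Endo_\epsilon}$ (via le noyau anisotrope de ses facteurs orthogonaux) qui les détermine. Vous utilisez bien ce Lévi une phrase plus loin, mais en le présentant comme secondaire; il faudrait inverser l'ordre des raisons. De même, votre dernier paragraphe traite surtout des blocs $\GL_{F_{ij}}(n_{ij})$; il serait bon d'expliciter que la coïncidence des facteurs $\SO$ tient à l'égalité de leurs noyaux anisotropes — ce qui est exactement le contenu de la citation du papier.
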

\begin{proof}
  Les groupes en question sont des groupes endoscopiques non ramifiés pour $G_\eta$, un produit direct des groupes considérés dans le \S\ref{sec:ex-endo} à restriction des scalaires près.  De plus, \ref{prop:compatibilite-bar} entraîne qu'ils sont associés au même élément $\bar{s} \in \hat{T}$. Or tous les deux contiennent le même Lévi $\overline{M^\Endo_\epsilon}$, donc sont isomorphes d'après la remarque sur les noyaux anisotropes dans \ref{ex:symplectique}.
\end{proof}

\subsection{Une généralisation}
On aura aussi besoin de considérer le cas général $s \in s_0 Z_{\dualmeta{M}}^\Gred/Z_{\dualmeta{G}}^\Gred$. Montrons comment le ramener au cas $s \in \mathcal{E}_{M^\Endo}(\tilde{G})$.

Rappelons que l'application $\gamma \mapsto \gamma[s]$ sur $M^\Endo(F)$ est définie dans le cas général où $\tilde{G}$ est de type métaplectique et $s = s_0 t \in s_0 Z_{\dualmeta{M}}^\Gred/Z_{\dualmeta{G}}^\Gred$ (voir \ref{def:torsion}). On définit ainsi les éléments $\bar{s}, \bar{s}_0 \in \hat{T}$ en combinant les résultats dans le \S\ref{sec:nouvelles-donnees} et la descente des données endoscopiques pour $\GL(\cdot)$, qui est simple.

On dispose encore de l'homomorphisme
\begin{gather*}
  \tau: Z_{\dualmeta{M}}^\Gred/Z_{\dualmeta{G}}^\Gred \to Z_{\widehat{M_\eta}}^{\Gamma_F}/Z_{\widehat{G_\eta}}^{\Gamma_F}.
\end{gather*}

Ainsi, on formule les analogues de \ref{prop:compatibilite-bar} et \ref{prop:compatibilite-pousser} dans ce cadre, avec les mêmes notations. Le bilan est que les assertions de \ref{prop:compatibilite-bar} et \ref{prop:compatibilite-pousser} sont encore valables dans cette situation.

\begin{proposition}\label{prop:compatibilite-general}
  Quitte à changer $\bar{s}$ par un isomorphisme de données endoscopiques pour $G_\eta$, on a $\bar{s} = \bar{s}_0 \tau(t)$. De plus, la construction d'Arthur utilisant $\bar{s}$ et la descente fournissent la même donnée endoscopique pour $G_\eta$: on a $G_\eta[\bar{s}] \simeq \overline{G[s]_{\epsilon[s]}}$.
\end{proposition}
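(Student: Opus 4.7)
Le plan est de ramener l'énoncé général aux deux lemmes \ref{prop:compatibilite-bar} et \ref{prop:compatibilite-pousser} déjà démontrés, en introduisant le Lévi intermédiaire $\tilde{L}$ fourni par \ref{def:torsion}. Précisément, étant donné $s = s_0 t \in s_0 Z_{\dualmeta{M}}^\Gred/Z_{\dualmeta{G}}^\Gred$, on choisit un sous-groupe de Lévi $\tilde{L}$ de $\tilde{G}$ avec $\tilde{M} \subset \tilde{L}$ et $s \in \mathcal{E}_{M^\Endo}(\tilde{L})$; c'est par référence à ce $\tilde{L}$ que la torsion $\gamma \mapsto \gamma[s]$ et les éléments $\bar{s}, \bar{s}_0 \in \hat{T}$ sont définis. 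Les assertions des deux lemmes précédents s'appliquent donc telles quelles à la situation $(\tilde{M}, \tilde{L}, s)$, et il reste à pousser les conclusions jusqu'à $G_\eta$ via le fait que $L_\eta$ est un sous-groupe de Lévi de $G_\eta$ partageant le même tore maximal.

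Pour la première assertion, appliquer \ref{prop:compatibilite-bar} à $(\tilde{M}, \tilde{L}, s)$ fournit, après ajustement par un isomorphisme de données endoscopiques, l'égalité $\bar{s} = \bar{s}_0 \tau_L(t)$, où
$$ \tau_L : Z_{\dualmeta{M}}^\Gred/Z_{\dualmeta{L}}^\Gred \to Z_{\widehat{M_\eta}}^{\Gamma_F}/Z_{\widehat{L_\eta}}^{\Gamma_F} $$
est l'analogue de $\tau$ pour la paire $(\tilde{M}, \tilde{L})$. Composant avec la projection $Z_{\widehat{M_\eta}}^{\Gamma_F}/Z_{\widehat{L_\eta}}^{\Gamma_F} \to Z_{\widehat{M_\eta}}^{\Gamma_F}/Z_{\widehat{G_\eta}}^{\Gamma_F}$, on obtient $\bar{s} = \bar{s}_0 \tau(t)$ modulo $Z_{\widehat{G_\eta}}^{\Gamma_F}$; la commutativité du diagramme de centres nécessaire est tautologique puisque les tours $\tilde{M} \subset \tilde{L} \subset \tilde{G}$ et $M_\eta \subset L_\eta \subset G_\eta$ sont compatibles pour les choix de tores duaux effectués.

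Pour la seconde assertion, appliquer \ref{prop:compatibilite-pousser} au niveau de $\tilde{L}$ donne $L_\eta[\bar{s}] \simeq \overline{L[s]_{\epsilon[s]}}$. On promeut cette identification à $G_\eta$ en invoquant la transitivité de la construction d'Arthur le long de la chaîne $R \subset L_\eta \subset G_\eta$: $\widehat{G_\eta[\bar{s}]} = Z_{\widehat{G_\eta}}(\bar{s})^0$ s'obtient de $\widehat{L_\eta[\bar{s}]} = Z_{\widehat{L_\eta}}(\bar{s})^0$ en considérant $L_\eta$ comme Lévi de $G_\eta$ et en utilisant la torsion résiduelle. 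Côté géométrique, $\overline{L[s]_{\epsilon[s]}}$ est naturellement un Lévi de $\overline{G[s]_{\epsilon[s]}}$, car la descente parabolique commute à l'opération $L \mapsto \bar{L}$ et à la formation de commutants connexes d'éléments semi-simples d'ordre fini premier à $p$. La combinaison des deux identifie $G_\eta[\bar{s}]$ et $\overline{G[s]_{\epsilon[s]}}$ comme données endoscopiques pour $G_\eta$, la canonicité résultant de l'unicité du noyau anisotrope à homothétie près (cf.\ \ref{ex:symplectique}).

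L'étape la plus délicate sera la comptabilité des centres et l'identification cohérente du tore $\hat{T}$ tout au long des deux tours de Lévi: une fois cette vérification effectuée, la proposition se réduit à une application directe des deux lemmes antérieurs combinée à la transitivité formelle de la construction d'Arthur. Les facteurs $\GL$ supplémentaires qui apparaissent en passant de $\tilde{L}$ à $\tilde{G}$ n'introduisent aucune difficulté nouvelle: l'endoscopie de $\GL(\cdot)$ est tautologique et l'opération $\overline{\phantom{X}}$ fixe les facteurs $\GL$, ce qui permet de traiter ces composantes de manière séparée et élémentaire.
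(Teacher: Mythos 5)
Ton plan suit la même stratégie globale que celle du papier (réduction au Lévi intermédiaire $\tilde{L} = \tilde{G}_1$ fourni par \ref{def:torsion}, puis application des deux lemmes précédents, puis traitement des facteurs $\GL$), mais il escamote le point vraiment délicat de l'étape intermédiaire.

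Le problème est dans ta \og transitivité de la construction d'Arthur le long de la chaîne $R \subset L_\eta \subset G_\eta$\fg. Cette transitivité \emph{n'est pas} formelle. En notant $G_1 := L$ comme dans le papier, appliquer \ref{prop:compatibilite-pousser} à $\tilde{G}_1$ donne $G_{1,\eta}[\bar{s}] \simeq \overline{G_1[s]_{\epsilon[s]}}$ où $\widehat{G_{1,\eta}[\bar{s}]} = Z_{\widehat{G_{1,\eta}}}(\bar{s})^0$; mais la construction d'Arthur au niveau $G_\eta$ fournit $\widehat{G_\eta[\bar{s}]} = Z_{\widehat{G_\eta}}(\bar{s})^0$, et il n'est pas automatique que $Z_{\widehat{G_\eta}}(\bar{s})^0 = Z_{\widehat{G_{1,\eta}}}(\bar{s})^0$, car $\bar{s}$ n'est plus elliptique pour $G_\eta$. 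Le papier établit précisément la \og régularité\fg{} $Z_{\widehat{G_\eta}}(\bar{s}) \subset \widehat{G_{1,\eta}}$ (équation \eqref{eqn:regularite-descente}), qui implique cette égalité. Sa preuve est une analyse de valeurs propres non triviale qui dépend crucialement du choix \eqref{eqn:s_0^ij} des composantes $\bar{s}_0^{ij}$ et de la position générique des $a_k$ — c'est là que sont utilisées les conditions $a_k \neq \pm 1$, $a_k \neq a_{k'}^{\pm 1}$, et le fait que $\bar{s}_0^{ij}$ ne dépend que du facteur de $G_\eta$ où se plonge $\GL_{F_{ij}}(n_{ij})$. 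Ta rédaction ne contient aucune trace de cette vérification et traite la descente du niveau $L_\eta$ au niveau $G_\eta$ comme un formalisme.

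Par ailleurs, ta phrase \og $\overline{L[s]_{\epsilon[s]}}$ est naturellement un Lévi de $\overline{G[s]_{\epsilon[s]}}$\fg{} est confuse: quand $s$ est non elliptique pour $\tilde{G}$, le groupe $G[s]$ est \emph{par définition} le groupe $G_1[s]$ où $s$ est devenu elliptique pour le Lévi $\tilde{G}_1$ (c'est \ref{prop:endo-Levi}). Les deux groupes $\overline{G[s]_{\epsilon[s]}}$ et $\overline{G_1[s]_{\epsilon[s]}}$ sont identiques, pas seulement emboîtés; il n'y a rien à \og pousser\fg{} côté géométrique, c'est côté Arthur (côté $\widehat{G_\eta}$) que réside l'obstruction.
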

\begin{proof}
  Procédons pas à pas. Il convient de remarquer que l'élément $z[s]$ est défini dans \ref{def:torsion} suivant le même schéma.
  \begin{enumerate}
    \item Supposons $\tilde{G}$ de type métaplectique et $s \in \mathcal{E}_{M^\Endo}(\tilde{G})$. Alors on se ramène au cas $\tilde{G} = \Mp(W)$, qui est déjà traité, et au cas $\tilde{G} = \GL(n) \times \bmu_8$, qui est simple.

    \item Supposons $\tilde{G}=\Mp(W)$ mais $s \in s_0 Z_{\dualmeta{M}}^\Gred/Z_{\dualmeta{G}}^\Gred$ est quelconque. Alors \ref{prop:endo-Levi} affirme qu'il existe $G_1 \in \mathcal{L}^G(M)$ tel que $G[s]=G_1[s]$ et $s \in \mathcal{E}_{M^\Endo}(\tilde{G}_1)$. On peut aussi supposer que $\eta \in G_1(F)$. L'étape précédente est alors applicable à $\tilde{G}_1$.

    Posons $\bar{s} := \bar{s}_0 \tau(t)$. Après descente en $(\epsilon,\eta)$, on s'est ramené à la situation
    \begin{gather}\label{eqn:diag-generalisation}
    \xymatrix{
      \overline{G[s]_{\epsilon[s]}} \ar@{=}[d] & & & G_\eta \\
      \overline{G_1[s]_{\epsilon[s]}} \ar[r]^{\sim} & G_{1,\eta}[\bar{s}] \ar@{--}[rr]^{\text{endo.}}_{\bar{s}=\bar{s}_0 \tau(t)} & & G_{1,\eta} \ar@{^{(}->}[u]_{\text{Lévi}} \\
      &  \overline{M^\Endo_\epsilon} \ar@{^{(}->}[u]^{\text{Lévi}} \ar@{--}[rr]^{\text{endo.ell.}}_{\bar{s}_0} & & R \ar@{^{(}->}[u]_{\text{Lévi}}
    }.
    \end{gather}

    Montrons que
    \begin{gather}\label{eqn:regularite-descente}
      Z_{\widehat{G_\eta}}(\bar{s}) \subset \widehat{G_{1,\eta}}.
    \end{gather}

    Écrivons
    \begin{align*}
      G & = \Sp(W), \\
      G_1 & = \prod_{k \in I^\natural} \GL(n_k^\natural) \times \Sp(W^\natural), \\
      M & = \prod_{i \in I} \GL(n_i) \times \Sp(W^\flat), \\
      M^\Endo & = \prod_{i \in I} \GL(n_i) \times \SO(2m'+1) \times \SO(2m''+1).
    \end{align*}

    Posons $2n := \dim_F W$, $2m^\natural := \dim_F W^\natural$ et $2m := \dim_F W^\flat$. Soient $i \in I$ et $k \in I^\natural$, écrivons $i \mapsto k$ si $\GL(n_i) \hookrightarrow \GL(n_k^\natural)$. Exprimons $t$ selon la décomposition de $\dualmeta{G}_1$:
    \begin{align*}
      t = ((a_k)_{k \in I^\natural}, t_0^\natural) \in \prod_{k \in I^\natural} \GL(n_k^\natural, \C) \times \Sp(2m^\natural,\C)
    \end{align*}
    où $a_k \in \C^\times \subset \GL(n_k^\natural, \C)$, $a_k \neq \pm 1$ pour tout $k \in I^\natural$ et $a_k \neq a_{k'}^{\pm 1}$ si $k \neq k'$.

    Rappelons que $\bar{s}_0^{ij} \in \{\pm 1\}$ pour tous $i \in I$, $j \in J_i$. Vu la description des commutants des éléments semi-simples dans les groupes classiques, il suffit de montrer que, pour tous $i, i' \in I$, $j \in J_i$, $j' \in J_{i'}$ tels que
    \begin{itemize}
      \item dans la décomposition de $G_\eta$ selon les valeurs propres de $\eta$, $\GL_{F_{ij}}(n_{ij})$ et $\GL_{F_{i'j'}}(n_{i'j'})$ se plongent dans le même facteur direct,
      \item $i \mapsto k$, $i' \mapsto k'$,
    \end{itemize}
    on a $\bar{s}_0^{ij} a_k = \left( \bar{s}_0^{i'j'} a_{k'} \right)^{\pm 1}$ si et seulement si $k=k'$.

    En effet, $\GL_{F_{ij}}(n_{ij})$ et $\GL_{F_{i'j'}}(n_{i'j'})$ se plongent dans le même facteur seulement s'il existe $\bullet \in \{+,-,U\}$ tel que $j \in J_i^\bullet$ et $j' \in J_{i'}^\bullet$. Cela entraîne que $\bar{s}_0^{ij} = \bar{s}_0^{i'j'}$ d'après \eqref{eqn:s_0^ij}. Donc $\bar{s}_0^{ij} a_k = \left( \bar{s}_0^{i'j'} a_{k'} \right)^{\pm 1}$ si et seulement si $a_k = a_{k'}^{\pm 1}$, si et seulement si $k=k'$. On en déduit \eqref{eqn:regularite-descente}.

    Vu \eqref{eqn:diag-generalisation} et \eqref{eqn:regularite-descente}, le groupe endoscopique $\overline{G[s]_{\epsilon[s]}} = \overline{G_1[s]_{\epsilon[s]}} \simeq G_{1,\eta}[\bar{s}]$ pour $G_\eta$ est associé à $\bar{s} = \bar{s}_0 \tau(t)$, d'où l'analogue de \ref{prop:compatibilite-bar}. Montrons que $G_\eta[\bar{s}] \simeq \overline{G[s]_{\epsilon[s]}}$. En effet, ces deux groupes endoscopiques de $G_\eta$ sont tous associés à $\bar{s}$ et ils partagent le Lévi $\overline{M^\Endo_\epsilon}$, donc sont isomorphes (cf. la démonstration de \ref{prop:compatibilite-pousser}). D'où l'analogue de \ref{prop:compatibilite-pousser}.

    \item Le cas général: supposons $\tilde{G}$ de type métaplectique et $s \in s_0 Z_{\dualmeta{M}}^\Gred/Z_{\dualmeta{G}}^\Gred$ quelconque. Écrivons $\tilde{G} = \prod_{k \in K} \GL(n_k) \times \Mp(W)$ où $(n_k)_{k \in K} \in \Z_{\geq 1}^K$. Pour achever la preuve, il suffit de combiner l'étape précédente pour $\Mp(W)$ avec la descente des données endoscopiques pour $\GL(n_k)$ pour chaque $k \in K$.
  \end{enumerate}
\end{proof}

\section{Descente des intégrales orbitales}\label{sec:descente-int}
\subsection{Les fonctions combinatoires}
Pour l'instant, soit $G$ un $F$-groupe réductif connexe quelconque. Soit $M$ un sous-groupe de Lévi. Fixons une forme quadratique définie positive $W^G(M)$-invariante sur $\mathfrak{a}_M$ telle que $\mathfrak{a}_M = \mathfrak{a}^G_M \oplus \mathfrak{a}_G$ est une décomposition orthogonale.

Soit $\eta \in M(F)_\text{ss}$. Posons $\underline{G} := G_\eta$ et $\underline{M} := M_\eta$. Fixons un sous-groupe de Lévi $\underline{R}$ de $\underline{G}$ qui est inclus dans $\underline{M}$. Il existe une inclusion $\mathfrak{a}_M \hookrightarrow \mathfrak{a}_{\underline{R}}$ induite par $A_M \hookrightarrow A_{\underline{R}}$. On peut prolonger la forme quadratique choisie sur $\mathfrak{a}_M$ en une forme quadratique définie positive sur $\mathfrak{a}_{\underline{R}}$, invariante par $W^{\underline{G}}(\underline{R})$, etc.

Notons $\mathfrak{a}^M_{\underline{R}}$ le complément orthogonal de $\mathfrak{a}_M$ dans $\mathfrak{a}_{\underline{R}}$. Idem, on définit l'espace $\mathfrak{a}^G_{\underline{R}}$. Tous ces espaces héritent des formes quadratiques définies positives invariantes.

Soit $\underline{L} \in \mathcal{L}^{\underline{G}}(\underline{R})$. On a une application linéaire canonique $\Sigma: \mathfrak{a}^M_{\underline{R}} \oplus \mathfrak{a}^{\underline{L}}_{\underline{R}} \to \mathfrak{a}^G_{\underline{R}}$; ces espaces sont munis de mesures de Haar grâce aux formes quadratiques choisies précédemment. Suivant Arthur, on définit
$$
  d^G_{\underline{R}}(M, \underline{L}) := \begin{cases}
    \dfrac{\text{la mesure sur } \mathfrak{a}^G_{\underline{R}}}{\Sigma_* \left( \text{la mesure sur } \mathfrak{a}^M_{\underline{R}} \oplus \mathfrak{a}^{\underline{L}}_{\underline{R}}\right)}, & \text{si } \mathfrak{a}^M_{\underline{R}} \oplus \mathfrak{a}^{\underline{L}}_{\underline{R}} = \mathfrak{a}^G_{\underline{R}}, \\
    0, & \text{sinon}.
  \end{cases}
$$

Remarquons que $\mathfrak{a}^M_{\underline{R}} \oplus \mathfrak{a}^{\underline{L}}_{\underline{R}} = \mathfrak{a}^G_{\underline{R}}$ si et seulement si $\mathfrak{a}^G_M \oplus \mathfrak{a}^G_{\underline{L}} = \mathfrak{a}^G_{\underline{R}}$. En effet, il suffit de prendre les compléments orthogonaux dans $\mathfrak{a}^G_{\underline{R}}$.

\subsection{Descente de l'intégrale orbitale pondérée endoscopique}
Conservons les notations introduites dans le \S\ref{sec:descente-donnees}.

\begin{proposition}\label{prop:r-cpt}
  Soit $\gamma \in M^\Endo_\text{G-\text{reg}}(F)$. Si $\gamma$ n'est pas compact, alors $r^{\tilde{G}}_{M^\Endo, K}(\gamma)=0$.
\end{proposition}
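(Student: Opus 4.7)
The plan is to show directly that every term in the defining sum
$$ r^{\tilde{G}}_{M^\Endo, K}(\gamma) = \sum_{\delta \in M(F)/\mathrm{conj}} \Delta(\gamma, \tilde{\delta}) \, r^{\tilde{G}}_{\tilde{M}, K}(\tilde{\delta}) $$
vanishes when $\gamma \in M^\Endo_{G-\text{reg}}(F)$ is non-compact. The argument will proceed in two short steps: first, I show that $r^{\tilde{G}}_{\tilde{M}, K}(\tilde{\delta}) = 0$ for any non-compact class $\delta \in M(F)$; second, I show that the correspondence $\mu$ preserves compactness, so that non-compactness of $\gamma$ forces every $\delta$ entering the sum to be non-compact.

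For the first step, recall that $r^{\tilde{G}}_{\tilde{M}, K}(\tilde{\delta})$ is the integral of $f_K(x^{-1}\tilde{\delta}x) v_M(x)$ over $G_\delta(F) \backslash G(F)$, and that $f_K$ is supported on $\bmu_8 \cdot K$. A non-zero value of the integrand would therefore imply $x^{-1} \delta x \in K$; since $K$ is compact, the closure of the subgroup generated by such a conjugate of $\delta$ would be compact, making $\delta$ itself compact and contradicting the hypothesis. Hence the integrand vanishes identically and $r^{\tilde{G}}_{\tilde{M}, K}(\tilde{\delta}) = 0$.

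For the second step, I would use the explicit description of $\mu$. On each $\GL(n_i)$-component the correspondence is tautological, hence trivially preserves compactness. On the $\Sp(W^\flat)$-block, if $\gamma = (\gamma', \gamma'')$ has eigenvalues $\{a'_i, (a'_i)^{-1}, 1\} \cup \{a''_j, (a''_j)^{-1}, 1\}$, then any corresponding $\delta \in M(F)$ has eigenvalues $\{a'_i, (a'_i)^{-1}, -a''_j, -(a''_j)^{-1}\}$. Since compactness of a semi-simple element in a classical group over $F$ is equivalent to all of its eigenvalues being units in $\bar{F}$, and since $|{-1}|_F = 1$, the two multisets of non-trivial eigenvalues have the same units/non-units profile; hence $\gamma$ is compact if and only if $\delta$ is. Combining the two steps, each summand in the defining expression for $r^{\tilde{G}}_{M^\Endo, K}(\gamma)$ vanishes, and the proposition follows.

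The only point requiring a moment of care is the sign change $a''_j \mapsto -a''_j$ in the correspondence on the $\SO(2m''+1)$-block, which is harmless precisely because it preserves the $F$-adic absolute value; I do not anticipate any substantial obstacle.
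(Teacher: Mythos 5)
Your proposal is correct and follows essentially the same two-step route as the paper: (i) a non-compact conjugacy class in $G(F)$ does not meet the compact subgroup $K$, so $r^{\tilde{G}}_{\tilde{M},K}(\tilde{\delta})$ vanishes for non-compact $\delta$, and (ii) the correspondence $\mu$ preserves compactness, so a non-compact $\gamma$ forces every $\delta$ in the sum to be non-compact. The paper simply asserts (ii) without the eigenvalue verification you spell out, but the substance of the argument is identical.
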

\begin{proof}
  La correspondance des classes de conjugaison préserve la compacité, et une classe de conjugaison dans $G(F)$ ne coupe pas $K$ si elle n'est pas compacte. Donc l'assertion découle de la définition de $r^{\tilde{G}}_{M^\Endo, K}(\gamma)$.
\end{proof}

Supposons maintenant $\gamma \in M^\Endo_\text{G-\text{reg}}(F)$ compact avec la décomposition de Jordan topologique
$$ \gamma=\exp(Y)\epsilon, \qquad Y \in \mathfrak{m}^\Endo_{\epsilon,\text{reg}}(F).$$

Comme $\gamma \mapsto r^{\tilde{G}}_{M^\Endo, K}(\gamma)$ est invariante par conjugaison géométrique, on peut supposer que $M^\Endo_\epsilon$ est quasi-déployé d'après \cite{Ko82}.

D'après la correspondance des classes de conjugaison géométriques régulières dans l'endoscopie non standard \S\ref{sec:non-standard} et la définition de $\overline{M^\Endo_\epsilon}$ dans \ref{def:bar}, on déduit une correspondance de classes de conjugaison géométriques entre $\mathfrak{m}^\Endo_{\epsilon, \text{reg}}(F)$ et $\overline{\mathfrak{m}^\Endo_{\epsilon, \text{reg}}}(F)$.

\begin{proposition}[cf. \cite{Wa09} 5.4]\label{prop:descente-endo-1}
  Soit $\epsilon$ comme ci-dessus. 
  \begin{enumerate}
    \item Si $M^\Endo_\epsilon$ n'est pas non ramifié, alors $r^{\tilde{G}}_{M^\Endo, K}(\gamma)=0$.
    \item Si $M^\Endo_\epsilon$ est non ramifié, alors il existe $\eta \in K^M$ qui lui correspond tel que l'on peut choisir $R \subset M_\eta$ comme dans le \S\ref{sec:nouvelles-donnees} qui est non ramifié et
    $$ r^{\tilde{G}}_{M^\Endo, K}(\gamma) = \sum_{L \in \mathcal{L}^{G_\eta}(R)} d^G_R(M,L) r^L_{\overline{M^\Endo_\epsilon}}(\bar{Y}), $$
    où $\bar{Y} \in \overline{\mathfrak{m}^\Endo_{\epsilon, \mathrm{reg}}}(F)$ correspond à $Y$.
  \end{enumerate}
\end{proposition}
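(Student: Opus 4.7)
The plan is to adapt Waldspurger's argument \cite{Wa09} 5.4 to the metaplectic setting, the main new ingredients being the splitting of $\rev$ over $K$ and the descent \cite{Li09} 5.18 of the metaplectic transfer factor. By Proposition~\ref{prop:r-cpt}, we may assume $\gamma$ is compact, so the topological Jordan decomposition gives $\gamma = \exp(Y)\epsilon$ with $Y \in \mathfrak{m}^\Endo_{\epsilon,\mathrm{reg}}(F)$. In the sum defining $r^{\tilde{G}}_{M^\Endo,K}(\gamma)$, only terms with $\delta$ compact contribute, and for such $\delta$ I would write the topological Jordan decomposition $\delta = \exp(X)\eta$, noting that the correspondence is compatible with this decomposition: $\eta \in M(F)$ corresponds to $\epsilon$, and $X \in \mathfrak{m}_\eta(F)$ corresponds to $\bar{Y}$ via the non-standard endoscopy $M_\eta \leftrightarrow \overline{M^\Endo_\epsilon}$ introduced in \S\ref{sec:nouvelles-donnees}. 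A key fact used repeatedly is that $\exp$ restricted to $K$ matches the splittings, so $f_K(\exp(X)\eta) = \mathbbm{1}_\mathfrak{k}(X)$ up to the splitting character of $\eta$, provided $\eta \in K^M$.

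For part (1), if $M^\Endo_\epsilon$ is not unramified, I claim no compact $\delta$ in the sum actually contributes. Indeed, $r^{\tilde{G}}_{\tilde{M},K}(\tilde\delta)\neq 0$ forces $\delta$ to be $G(F)$-conjugate into $K$, whence $\eta = \delta_{p'}$ (of order prime to $p$) is $M(F)$-conjugate to an element of $K^M$, in which case $M_\eta$ is unramified. Via the endoscopy and non-standard endoscopy between $M_\eta$ and $\overline{M^\Endo_\epsilon}$ from \S\ref{sec:nouvelles-donnees}, this would force $\overline{M^\Endo_\epsilon}$, hence $M^\Endo_\epsilon$, to be unramified, a contradiction. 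So the sum vanishes term by term.

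For part (2), assume $M^\Endo_\epsilon$ is unramified. I would then reorganize the sum as an outer sum over classes of $\eta \in K^M$ corresponding to $\epsilon$ (existence being provided by Kottwitz since $M^\Endo_\epsilon$ is quasi-déployé and unramified), and an inner sum over $X \in \mathfrak{m}_\eta(F)/\mathrm{conj}$ such that $\exp(X)\eta$ corresponds to $\gamma$. For the inner sum, I apply the Harish-Chandra/Arthur style descent for ordinary weighted orbital integrals (i.e.\ Waldspurger's descent formula proved at the level of $G$): there exists $R \subset M_\eta$ (the Levi of \S\ref{sec:nouvelles-donnees}, unramified since $M_\eta$ is so) such that
\[
  r^{\tilde{G}}_{\tilde{M},K}(\tilde\delta) = \sum_{L \in \mathcal{L}^{G_\eta}(R_\eta)} d^G_{R_\eta}(M,L)\, r^L_{R_\eta, K^L}(X),
\]
where $R_\eta$ is a Levi of $M_\eta$ depending on the conjugacy class of $\delta$; here the metaplectic splitting over $K$ is what ensures one stays on the Lie algebra side with the correct normalization. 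I then invoke the descent \cite{Li09} 5.18 of the metaplectic transfer factor to rewrite the product $\Delta(\gamma,\tilde\delta) r^L_{R_\eta}(X)$ as the corresponding Lie-algebra endoscopic ingredient for the pair $(R, \overline{M^\Endo_\epsilon})$ inside $G_\eta$.

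The final step is to collect terms: summing over $X$ and over classes of $\eta$ repackages the inner expressions into the endoscopic Lie-algebra integral $r^L_{\overline{M^\Endo_\epsilon}}(\bar{Y})$, the correspondence $X \leftrightarrow \bar{Y}$ being precisely the non-standard one of \ref{sec:non-standard} applied to $M_\eta$ vs.\ $\overline{M^\Endo_\epsilon}$. The main technical obstacle is bookkeeping the combinatorial factor $d^G_R(M,L)$ after the reindexing: one must verify that summing over $\eta$-classes together with the internal Arthur sum produces exactly $\sum_{L\in\mathcal{L}^{G_\eta}(R)} d^G_R(M,L)$, which in Waldspurger's argument requires careful comparison of stabilizers and a choice of $\eta$ making $R$ unramified; the other subtle point is checking that $\bmu_8$-valued transfer factors and the splitting of $\rev$ over $K$ combine coherently, which is ultimately guaranteed by the compatibility \cite{Li09} 5.18.
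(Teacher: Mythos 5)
Your overall strategy—descend via topological Jordan decomposition, use the metaplectic splitting over $K$, descend the transfer factor, apply Arthur's descent of weighted orbital integrals—matches the paper's outline for part~(2) in broad strokes. But there is a genuine error in your treatment of part~(1).

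You claim that if $M^\Endo_\epsilon$ is ramified, then no compact $\delta$ contributes, arguing that any contributing $\delta$ has $\eta$ conjugate into $K^M$, hence $M_\eta$ unramified, hence (via the endoscopy of \S\ref{sec:nouvelles-donnees}) $\overline{M^\Endo_\epsilon}$ and $M^\Endo_\epsilon$ would be unramified. This implication is false: $\overline{M^\Endo_\epsilon}$ is an \emph{endoscopic group} of a Levi $R\subset M_\eta$, and endoscopic groups of unramified groups need not be unramified—for instance $\Sp(W_\pm)$, always unramified, admits elliptic endoscopic groups containing a ramified $\SO(V'',q'')$ factor. So individual terms in the sum over $\delta$ do not vanish, and your part~(1) has no proof. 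The paper instead performs the full descent unconditionally (arriving at the formula labeled \eqref{eqn:r-descendu}, an inner sum $\sum_{X}\Delta_{\overline{M^\Endo_\epsilon},R}(\bar Y,X)\,r^L_{R,K^L}(X)$), and then invokes a variant of Kottwitz's vanishing theorem (\cite{Wa09} pp.~154--155): when the elliptic endoscopic datum $\overline{M^\Endo_\epsilon}$ of $R$ is ramified, this \emph{endoscopic} sum vanishes as a whole. The vanishing happens only after summing against the transfer factors, not term by term; this is the idea you are missing.

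Two secondary points. First, your ``outer sum over classes of $\eta\in K^M$'' does not occur: after reducing to $\delta_0\in K^M$ and taking its decomposition $\delta_0=\exp(X_0)\eta$, the paper shows (using $\Xi^{M_\eta,K^M_\eta}[\bar Y]$ and the argument of \cite{Li09}~5.23) that every relevant conjugacy class in $M(F)$ has a representative $\exp(X)\eta$ with the \emph{same} $\eta$ and varying $X$, so only $X$ is summed over; moreover $R$ is a fixed Levi of $M_\eta$ (determined, up to $M_\eta(F)$-conjugation, by the endoscopic datum), not a Levi ``$R_\eta$ depending on the conjugacy class of $\delta$.'' Second, the relevant descent of the metaplectic transfer factor is \cite{Li09}~7.23, not~5.18 (the latter is the parabolic-descent/adaptedness statement, used elsewhere).
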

\begin{proof}
  Supposons d'abord que $M^\Endo_\epsilon$ n'est pas non ramifié. Si pour chaque $\delta \in M(F)$ correspondant à $\gamma$, la classe $\mathcal{O}^G(\delta)$ ne coupe pas $K$, alors $r^{\tilde{G}}_{M^\Endo, K}(\gamma)=0$ et l'assertion est vérifiée dans ce cas.

  Supposons maintenant qu'il existe $\delta_0 \in M(F)$ correspondant à $\gamma$ tel que $\mathcal{O}^G(\delta_0) \cap K \neq \emptyset$. Montrons que $\mathcal{O}^M(\delta_0) \cap K^M \neq \emptyset$. Il existe $g \in G(F)$ tel que $g \delta_0 g^{-1} \in K$. Prenons $P=MU \in \mathcal{P}^G(M)$, la décomposition d'Iwasawa permet d'écrire $g=kum$ avec $k \in K$, $u \in U(F)$ et $m \in M(F)$. Alors $um\delta_0 m^{-1}u^{-1} \in K$. Il existe $u' \in U(F)$ tel que $um\delta_0 m^{-1} u ^{-1} = u' m\delta_0 m^{-1}$. Comme $M$ est en bonne position relativement à $K$, on a $u' \in K$ et $m\delta_0 m^{-1} \in K^M$.

  On peut donc supposer que $\delta_0 \in K^M$. Notons $\delta_0 = \exp(X_0) \eta$ la décomposition de Jordan topologique, alors $\eta \in K$ et $K_\eta := G_\eta(F) \cap K$ est un sous-groupe hyperspécial de $G_\eta$ d'après \cite{Wa09} 5.3. En particulier, $G_\eta$ est non ramifié. Désignons $\mathfrak{k}_\eta$ le sous-réseau hyperspécial dans $\mathfrak{g}_\eta(F)$ associé à $K_\eta$. Idem, on obtient $K^M_\eta \subset M_\eta(F)$, $\mathfrak{k}^M_\eta \subset \mathfrak{m}_\eta(F)$ jouissant des mêmes propriétés.

  Posons $\Xi^{M_\eta}[\bar{Y}]$ l'ensemble des classes de conjugaison dans $\mathfrak{m}_{\eta,\text{reg}}(F)$ qui correspondent à $\bar{Y}$; posons $\Xi^{M_\eta,K_\eta^M}[\bar{Y}]$ son sous-ensemble des classes coupant $K^M_\eta$. Prenons un ensemble de représentants $\dot{\Xi}^{M_\eta,K_\eta^M}[\bar{Y}]$ dans $\mathfrak{m}_\eta(F)$. Comme dans la démonstration de \cite{Li09} 5.23, l'ensemble des classes de conjugaison dans $M(F)$ qui correspondent à $\gamma$ et coupent $K^M$ a pour ensemble de représentants
  $$ \left\{ \exp(X)\eta : X \in \dot{\Xi}^{M_\eta,K_\eta^M}[\bar{Y}] \right\}.$$

  Rappelons que le Lévi $R \subset M_\eta$ choisi dans le \S\ref{sec:nouvelles-donnees} n'est unique qu'à conjugaison par $M_\eta(F)$ près. Comme $K_\eta \subset G_\eta(F)$ est encore en bonne position relativement à $M_\eta$, on peut prendre un $F$-tore déployé maximal $T_0$ de $G_\eta$ tel que $T_0 \subset M_\eta$ et $K_\eta$ correspond à un sommet hyperspécial dans l'appartement associé à $T_0$ dans l'immeuble de Bruhat-Tits de $G_\eta$. Quitte à conjuguer $R$ par $M_\eta(F)$, on peut supposer que $T_0 \subset R$. Avec ce choix, $K^R := K \cap R(F)$ est un sous-groupe hyperspécial de $R(F)$.

  On définit l'ensemble $\Xi^{R}[\bar{Y}]$ en remplaçant $M_\eta$ par $R$ dans la définition ci-dessus. L'application naturelle $\Xi^{R}[\bar{Y}] \to \Xi^{M_\eta}[\bar{Y}]$ est bijective d'après \cite{Wa09} (4). Prenons un ensemble de représentants $\dot{\Xi}^R[\bar{Y}]$ dans $\mathfrak{r}(F)$. Le bilan est
  $$ r^{\tilde{G}}_{M^\Endo,K}(\exp(Y)\epsilon) = \sum_{X \in \dot{\Xi}^R[\bar{Y}]} \Delta(\exp(Y)\epsilon, \exp(X)\eta)\; r^{\tilde{G}}_{\tilde{M},K}(\exp(X)\eta), $$
  où on regarde $\eta$ comme un élément dans $\tilde{M}$ à l'aide du scindage de $\rev$ au-dessus de $K^M$.

  La descente de facteur de transfert \cite{Li09} 7.23 affirme que
  $$ \Delta(\exp(Y)\epsilon, \exp(X)\eta) = \Delta_{\overline{M^\Endo_\epsilon}, R}(\bar{Y},X) $$
  où le terme à gauche signifie un facteur de transfert pour la donnée endoscopique elliptique pour $R$ construite dans le \S\ref{sec:nouvelles-donnees}, qui n'est pas forcément non ramifiée.

  D'autre part, la descente d'intégrales orbitales pondérées non ramifiées \cite{Wa09} 4.4 s'adapte au cas métaplectique (cf. la preuve de \cite{Li09} 8.10). Cela affirme que
  $$ r^{\tilde{G}}_{\tilde{M},K}(\exp(X)\eta) = \sum_{L \in \mathcal{L}^{G_\eta}(R)} d^G_R(M,L) r^L_{R,K^L} (X) $$
  où $K^L := K_\eta \cap L(F)$, qui est hyperspécial dans $L(F)$. Donc
  \begin{gather}\label{eqn:r-descendu}
    r^{\tilde{G}}_{M^\Endo, K}(\gamma) = \sum_{L \in \mathcal{L}^{G_\eta}(R)} d^G_R(M,L) \left( \sum_{X \in \dot{\Xi}^R[\bar{Y}]} \Delta_{\overline{M^\Endo_\epsilon}, R}(\bar{Y},X) r^L_{R,K^L}(\bar{Y})\right).
  \end{gather}

  Si $M^\Endo_\epsilon$ n'est pas non ramifié, $\overline{M^\Endo_\epsilon}$ ne l'est pas non plus, et le terme dans la parenthèse est nul d'après une variante d'un résultat de Kottwitz (voir \cite{Wa09} pp.154-155). Alors l'assertion est vérifiée dans ce cas-là.

  Il reste à traiter le cas $M^\Endo_\epsilon$ non ramifié. Fixons un sous-groupe hyperspécial $K^\Endo$ de $M^\Endo$. Dans ce cas-là, quitte à remplacer $\gamma$ par un conjugué géométrique, on peut supposer de plus que $\epsilon \in K^\Endo$ d'après \cite{Wa08} 5.3 et \cite{Ko82}. Vu \cite{Li09} 8.9, il existe toujours $\delta \in M(F)$ qui correspond à $\gamma$ avec la décomposition de Jordan $\delta=\exp(X)\eta$ tel que $\eta \in K^M$. Donc la formule \eqref{eqn:r-descendu} est valable. L'hypothèse (B) du \S\ref{sec:descente-donnees} est valable et la donnée endoscopique elliptique $(\overline{M^\Endo_\epsilon},\ldots)$ pour $R$ est donc non ramifiée. Le facteur de transfert descendu $\Delta_{\overline{M^\Endo_\epsilon}, R}$ est normalisé par rapport à $K^R$, toujours d'après \cite{Li09} 7.23. D'où
  $$ \sum_{X \in \dot{\Xi}^R[\bar{Y}]} \Delta_{\overline{M^\Endo_\epsilon}, R}(\bar{Y},X) r^L_{R,K^L}(\bar{Y}) = r^L_{\overline{M^\Endo_\epsilon},K^L}(\bar{Y}), \quad L \in \mathcal{L}^{G_\eta}(R). $$

  L'assertion en résulte en le mettant dans \eqref{eqn:r-descendu}.
\end{proof}

Supposons que $M^\Endo_\epsilon$ est non ramifié et $\eta \in K^M$ est l'élément fourni par \ref{prop:descente-endo-1} tel que $\eta$ correspond à $\epsilon$ et $M_\eta$ est non ramifié. Avec ce choix de $(\epsilon,\eta)$, posons
\begin{gather}\label{eqn:Einst}
  E^\text{inst} := \{ (L,\bar{s}) : L \in \mathcal{L}^{G_\eta}(R), \; d^G_R(M,L) \neq 0, \; \bar{s} \in \mathcal{E}_{\overline{M^\Endo_\epsilon}}(L) \}.
\end{gather}

\begin{corollary}
  Avec les hypothèses précédentes, on a
  $$ r^{\tilde{G}}_{M^\Endo, K}(\gamma) = \sum_{(L,\bar{s}) \in E^\mathrm{inst}} d^G_R(M,L) i_{\overline{M^\Endo_\epsilon}}(L,L[\bar{s}]) s^{L[\bar{s}]}_{\overline{M^\Endo_\epsilon}}(\bar{Y}) $$
  où $\bar{Y} \in \overline{\mathfrak{m}^\Endo_{\epsilon, \mathrm{reg}}}(F)$ correspond à $Y$.
\end{corollary}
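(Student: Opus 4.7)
Le plan est d'appliquer le lemme fondamental pondéré sur les algèbres de Lie \ref{prop:LF-pondere-alg} à chaque terme de la formule de descente fournie par \ref{prop:descente-endo-1}. Concrètement, on partirait de l'identité
$$ r^{\tilde{G}}_{M^\Endo, K}(\gamma) = \sum_{L \in \mathcal{L}^{G_\eta}(R)} d^G_R(M,L)\, r^L_{\overline{M^\Endo_\epsilon}, K^L}(\bar{Y}), $$
où $K^L := K_\eta \cap L(F)$ est hyperspécial dans $L(F)$; les données endoscopiques elliptiques non ramifiées $(\overline{M^\Endo_\epsilon}, \ldots)$ pour $R$ proviennent de la construction du \S\ref{sec:nouvelles-donnees} sous l'hypothèse (B), qui est garantie par le fait que $M^\Endo_\epsilon$ est non ramifié et $\eta \in K^M$.

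Avant d'appliquer \ref{prop:LF-pondere-alg} à chaque terme $r^L_{\overline{M^\Endo_\epsilon}, K^L}(\bar{Y})$, il faudrait d'abord vérifier que $\bar{Y}$ est $L$-régulier pour tout $L \in \mathcal{L}^{G_\eta}(R)$ avec $d^G_R(M,L) \neq 0$. Cela devrait découler du fait que $\gamma$ est $G$-régulier: en écrivant $\delta = \exp(X)\eta \in M(F)$ pour un élément correspondant à $\gamma$, l'élément $X$, vu comme élément de $\mathfrak{g}_\eta(F)$, y est régulier, donc régulier dans chaque $\mathfrak{l}(F)$ contenant $\mathfrak{r}$. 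Par la correspondance des classes géométriques pour l'endoscopie non standard descendue associée à $(R, \overline{M^\Endo_\epsilon})$ rappelée avant \ref{prop:descente-endo-1}, l'élément $\bar{Y}$ serait alors $L$-régulier pour un tel $L$.

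On appliquerait ensuite \ref{prop:LF-pondere-alg} à la donnée endoscopique non ramifiée pour $R$ (vu comme un Lévi de $L$) afin de développer chaque terme en
$$ r^L_{\overline{M^\Endo_\epsilon}, K^L}(\bar{Y}) = \sum_{\bar{s} \in \mathcal{E}_{\overline{M^\Endo_\epsilon}}(L)} i_{\overline{M^\Endo_\epsilon}}(L, L[\bar{s}])\, s^{L[\bar{s}]}_{\overline{M^\Endo_\epsilon}}(\bar{Y}). $$
En substituant dans la formule de descente et en regroupant les contributions par paires $(L,\bar{s})$ selon la définition \eqref{eqn:Einst} de $E^\mathrm{inst}$, on obtiendrait directement l'égalité cherchée.

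Le seul point nécessitant quelque vigilance sera la compatibilité des normalisations: les formes quadratiques invariantes sur $\mathfrak{a}_M$, $\mathfrak{a}_R$ et sur les divers $\mathfrak{a}_L$ utilisées dans la formule de descente doivent correspondre à celles utilisées dans \ref{prop:LF-pondere-alg}, et les mesures de Haar sur les commutants, normalisées via les $\mathfrak{o}_F$-schémas en groupes canoniques de Bruhat--Tits (\S\ref{sec:int-orb}), doivent se transférer correctement le long des correspondances endoscopiques. Ces compatibilités sont déjà fixées par les conventions du \S\ref{sec:int-orb} et implicites dans la preuve de \ref{prop:descente-endo-1}; il n'y a donc pas d'obstacle substantiel, le corollaire se ramenant à une combinaison directe de \ref{prop:descente-endo-1} avec \ref{prop:LF-pondere-alg}.
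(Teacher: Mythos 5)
Votre preuve est correcte et suit exactement la même démarche que celle de l'article: appliquer la formule de descente de \ref{prop:descente-endo-1}, puis développer chaque $r^L_{\overline{M^\Endo_\epsilon},K^L}(\bar{Y})$ par le lemme fondamental pondéré sur les algèbres de Lie \ref{prop:LF-pondere-alg}, et regrouper les termes selon la définition de $E^{\mathrm{inst}}$. L'article se contente d'énoncer que «cela résulte aussitôt» des deux résultats cités; vous explicitez utilement les vérifications sous-jacentes ($L$-régularité de $\bar{Y}$, compatibilité des formes quadratiques et des mesures), qui sont correctes et bien implicites dans la preuve de \ref{prop:descente-endo-1}.
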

\begin{proof}
  Cela résulte aussitôt de \ref{prop:descente-endo-1} et du lemme fondamental pondéré sur les algèbres de Lie \ref{prop:LF-pondere-alg}.
\end{proof}

\subsection{Descente des fonctions stabilisées}
Cette sous-section est parallèle à la précédente. Nous ne répétons plus les notations et hypothèses.

\begin{lemma}\label{prop:s-cpt}
  Si $\gamma \in M^\Endo_{G-\text{reg}}(F)$ n'est pas compact, alors
  $$ \sum_{s \in \mathcal{E}_{M^\Endo}(\tilde{G})} i_{M^\Endo}(\tilde{G},G[s]) \cdot s^{G[s]}_{M^\Endo}(\gamma[s]) = 0. $$
\end{lemma}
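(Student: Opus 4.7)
The plan is to reduce the statement to the vanishing of each individual term $s^{G[s]}_{M^\Endo}(\gamma[s])$. First, I would observe that by \ref{def:torsion} the element $z[s] \in Z_{M^\Endo}(F)$ satisfies $z[s]^2 = 1$, so it is compact; being central, multiplication by $z[s]$ preserves the property of being compact, and therefore $\gamma[s] = z[s]\gamma$ is compact if and only if $\gamma$ is. Hence the non-compactness hypothesis on $\gamma$ transfers to every $\gamma[s]$ with $s \in \mathcal{E}_{M^\Endo}(\tilde{G})$, and the lemma reduces to proving the general principle that, for any non-ramified connected reductive $F$-group $L$, any Levi subgroup $R$ of $L$, and any element $\gamma' \in R_{L-\mathrm{reg}}(F)$ that is not compact in $L(F)$, one has $s^L_R(\gamma') = 0$.

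For this general principle I would argue by induction on $\dim L$, using Arthur's recursive definition of $s^L_R$ from \cite{Ar02} 5.1. The input, and the analogue of \ref{prop:r-cpt}, is that the non-stabilized endoscopic integrals $r^L_{R^\Endo, K}$ all vanish on non-compact elements: this follows because the characteristic function $\mathbbm{1}_{\mathfrak{k}}$ (or $f_K$) is supported on compact elements, and the endoscopic correspondence of semisimple classes preserves compactness. Arthur's stabilization then expresses $r^L_{R^\Endo, K}$ as a finite sum of $i_{R^\Endo}(L, L[s])\, s^{L[s]}_{R^\Endo}$; inverting this identity by induction, the function $s^L_R$ is a linear combination of terms each of which involves either an endoscopic transfer $r^{L'}_{R^\Endo, K}$ or a stabilized integral $s^{L'}_{R^\Endo}$ for strictly smaller $L'$, all of which vanish on non-compact elements by the induction hypothesis together with the preservation of compactness under the correspondence of classes.

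Applying this to $L = G[s]$ and $R = M^\Endo$ for each $s \in \mathcal{E}_{M^\Endo}(\tilde{G})$, and using that $\gamma[s]$ is $G[s]$-regular (an automatic consequence of $\gamma$ being $G$-regular under the correspondence of classes of \ref{prop:torsion-correspondance}), one concludes that $s^{G[s]}_{M^\Endo}(\gamma[s]) = 0$ for every $s$, and hence the whole sum vanishes.

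The main potential obstacle is technical rather than conceptual: one must verify that Arthur's inversion of the endoscopic identity, which introduces coefficients $i_{R^\Endo}(L, L[s])$ and the non-elliptic endoscopic data of the construction in \S\ref{sec:endoscopie}, does not disturb the support property. This amounts to checking that every group $L'$ occurring in the recursion has $\dim L' \leq \dim L$—with equality occurring only for the tautological datum contributing $s^L_R$ itself—so that the induction closes. Once this bookkeeping is done, the lemma follows immediately from the compactness argument of \ref{prop:r-cpt} applied componentwise.
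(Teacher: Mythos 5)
Your proposal takes essentially the same route as the paper: both reduce to showing each term $s^{G[s]}_{M^\Endo}(\gamma[s])$ vanishes individually, noting that $z[s]$ is of finite order (hence compact) and central, so $\gamma[s]=z[s]\gamma$ is non-compact whenever $\gamma$ is. The paper then simply cites Waldspurger \cite{Wa09} 6.1 for the vanishing of the stabilized functions $s^L_R$ on non-compact elements, whereas you sketch the inductive argument (on Arthur's recursive definition, using that $r^L_{R,K}$ is supported on compacts and that the endoscopic correspondence preserves compactness) that underlies that citation.
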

\begin{proof}
  En fait, $s^{G[s]}_{M^\Endo}(\gamma[s]) = 0$ pour tout $s$ car $\gamma[s]$ n'est pas compact: cela est prouvé dans \cite{Wa09} 6.1.
\end{proof}

Supposons désormais que $\gamma=\exp(Y)\epsilon$ est compact. Comme remarqué plus haut, on peut supposer $M^\Endo_\epsilon$ quasi-déployé. Soit $s \in \mathcal{E}_{M^\Endo}(\tilde{G})$, alors $\gamma[s] = \exp(Y)\epsilon[s]$ est encore une décomposition de Jordan topologique.

Pour tout $L^\epsilon \in \mathcal{L}^{G[s]_{\epsilon[s]}}(M^\Endo_\epsilon)$, supposé muni d'une donnée de L-groupe , posons
$$ e^{G[s]}_{M^\Endo_\epsilon}(M^\Endo, L^\epsilon) := \begin{cases}
  [Z_{\widehat{M^\Endo}}^{\Gamma_F} \cap Z_{\widehat{L^\epsilon}}^{\Gamma_F} : Z_{\widehat{G[s]}}^{\Gamma_F}]^{-1} d^{G[s]}_{M^\Endo_\epsilon}(M^\Endo, L^\epsilon), & \text{si } d^{G[s]}_{M^\Endo_\epsilon}(M^\Endo, L^\epsilon) \neq 0, \\
  0, & \text{sinon}.
\end{cases}$$

Posons
\begin{gather}\label{eqn:Est}
  E^\text{st} := \{(L^\epsilon,s): s \in \mathcal{E}_{M^\Endo}(\tilde{G}), \; L^\epsilon \in \mathcal{L}^{G[s]_{\epsilon[s]}}(M^\Endo_\epsilon), \; d^{G[s]}_{M^\Endo_\epsilon}(M^\Endo, L^\epsilon) \neq 0 \}.
\end{gather}

\begin{proposition}\label{prop:descente-st-1}
  Conservons les hypothèses précédentes.
  \begin{enumerate}
    \item Si $M^\Endo_\epsilon$ n'est pas non ramifié, alors
      $$ \sum_{s \in \mathcal{E}_{M^\Endo}(\tilde{G})} i_{M^\Endo}(\tilde{G},G[s]) \cdot s^{G[s]}_{M^\Endo}(\gamma[s]) = 0. $$
    \item Si $M^\Endo_\epsilon$ est non ramifié, alors
      $$
        \sum_{s \in \mathcal{E}_{M^\Endo}(\tilde{G})} i_{M^\Endo}(\tilde{G},G[s]) \cdot s^{G[s]}_{M^\Endo}(\gamma[s]) =
        \sum_{(L^\epsilon, s) \in E^\mathrm{st}} i_{M^\Endo}(\tilde{G},G[s]) e^{G[s]}_{M^\Endo_\epsilon}(M^\Endo, L^\epsilon) \cdot s^{L^\epsilon}_{M^\Endo_\epsilon}(Y).
      $$
  \end{enumerate}
\end{proposition}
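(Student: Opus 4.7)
Mon plan est de procéder parallèlement à la démonstration de \ref{prop:descente-endo-1}, en remplaçant la descente des intégrales orbitales pondérées par celle des fonctions stabilisées due à Waldspurger \cite{Wa09}. L'idée maîtresse: pour chaque $s \in \mathcal{E}_{M^\Endo}(\tilde{G})$, je descendrais $s^{G[s]}_{M^\Endo}(\gamma[s])$ autour de l'élément semi-simple $\epsilon[s]$, puis je sommerais sur $s$ avec les coefficients $i_{M^\Endo}(\tilde{G},G[s])$.

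La première étape est d'observer que $z[s] \in Z_{M^\Endo}(F)$ est d'ordre divisant $2$, donc d'ordre premier à $p$, et qu'il est contenu dans tout sous-groupe hyperspécial de $M^\Endo(F)$. Par suite, $\epsilon[s] = z[s]\epsilon$ est semi-simple d'ordre fini premier à $p$, commute à $\exp(Y)$, et vérifie $M^\Endo_{\epsilon[s]} = M^\Endo_\epsilon$. Ainsi $\gamma[s] = \exp(Y)\epsilon[s]$ est encore une décomposition de Jordan topologique, avec le même $Y \in \mathfrak{m}^\Endo_\epsilon(F)$ indépendamment de $s$. C'est crucial pour la suite, car cela entraîne que le terme $s^{L^\epsilon}_{M^\Endo_\epsilon}(Y)$ qui apparaîtra après descente est le même pour toutes les données $s$.

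La deuxième étape consiste à appliquer à chaque $s^{G[s]}_{M^\Endo}(\gamma[s])$ la formule de descente de Waldspurger pour les fonctions stabilisées (variante stable de \cite{Wa09} 4.4). Si $M^\Endo_\epsilon$ n'est pas non ramifié, le résultat s'annule individuellement pour chaque $s$, par l'argument de Kottwitz évoqué dans \cite{Wa09} pp.\ 154--155; cela entraîne immédiatement (1). Sinon, la descente de Waldspurger fournit
$$ s^{G[s]}_{M^\Endo}(\gamma[s]) = \sum_{L^\epsilon \in \mathcal{L}^{G[s]_{\epsilon[s]}}(M^\Endo_\epsilon)} e^{G[s]}_{M^\Endo_\epsilon}(M^\Endo, L^\epsilon) \cdot s^{L^\epsilon}_{M^\Endo_\epsilon}(Y), $$
avec exactement le coefficient $e^{G[s]}_{M^\Endo_\epsilon}(M^\Endo, L^\epsilon)$ défini dans le texte. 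Le facteur central $[Z_{\widehat{M^\Endo}}^{\Gamma_F} \cap Z_{\widehat{L^\epsilon}}^{\Gamma_F} : Z_{\widehat{G[s]}}^{\Gamma_F}]^{-1}$ sort de la comparaison des normalisations des intégrales stables entre $G[s]$ et son centralisateur $G[s]_{\epsilon[s]}$, où l'on utilise que $G[s]$ est quasi-déployé et non ramifié par hypothèse.

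Pour achever (2), je sommerais cette égalité sur $s \in \mathcal{E}_{M^\Endo}(\tilde{G})$ pondérée par $i_{M^\Endo}(\tilde{G},G[s])$ et réindexerais selon $E^\text{st}$ défini en \eqref{eqn:Est}; la condition $d^{G[s]}_{M^\Endo_\epsilon}(M^\Endo,L^\epsilon) \neq 0$ y intervient exactement comme condition de non-annulation des termes. L'obstacle principal n'est donc pas conceptuel mais une vérification: il faut importer fidèlement la descente stable de \cite{Wa09} dans notre cadre, en s'assurant que les conventions de mesures de Haar et de normalisation des intégrales stables fixées au \S\ref{sec:int-orb} coïncident avec celles de Waldspurger, et que la formule s'applique bien à la fonction caractéristique d'un hyperspécial quelconque de $G[s](F)$ pour tout $s$, y compris les coefficients centraux dans $e^{G[s]}_{M^\Endo_\epsilon}$.
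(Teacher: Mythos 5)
Votre démarche coïncide pour l'essentiel avec celle du texte, qui se réduit à une seule phrase : appliquer la formule de descente des fonctions stabilisées de Waldspurger (\cite{Wa09} 6.4 — et non une « variante stable de 4.4 » comme vous l'écrivez, 6.4 étant l'énoncé qui traite déjà directement le cas stable avec le coefficient $e^{G[s]}_{M^\Endo_\epsilon}$ et l'annulation quand $M^\Endo_\epsilon$ n'est pas non ramifié) à $M^\Endo \hookrightarrow G[s]$ en $\exp(Y)\epsilon[s]$ pour chaque $s$, puis noter que $M^\Endo_{\epsilon[s]} = M^\Endo_\epsilon$. Vos observations sur $z[s]$ et l'indépendance de $Y$ par rapport à $s$ sont exactement les vérifications implicites dans cette phrase, et votre réindexation par $E^\text{st}$ est la lecture correcte de l'énoncé; seule la référence précise est à corriger.
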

\begin{proof}
  Il suffit d'appliquer \cite{Wa09} 6.4 aux groupes $M^\Endo \hookrightarrow G[s]$, à l'élément $\exp(Y)\epsilon[s]$, pour tout $s$, et noter que $M^\Endo_\epsilon = M^\Endo_{\epsilon[s]}$.
\end{proof}

\subsection{Un ensemble d'indices}\label{sec:E}
Plaçons-nous sous l'hypothèse (B) du \S\ref{sec:descente-donnees} pour $\eta$ et $\epsilon$. Soit $s = s_0 t \in s_0 Z_{\dualmeta{M}}^\Gred/Z_{\dualmeta{G}}^\Gred$. Prenons $\bar{s} = \bar{s}_0 \tau(t)$ comme dans \ref{prop:compatibilite-general}. Soit $L \in \mathcal{L}^{G_\eta}(R)$. Par abus de notation, on désigne l'image de $\bar{s}$ dans $\bar{s}_0 Z_{\hat{R}}^{\Gamma_F}/Z_{\bar{L}}^{\Gamma_F}$ par le même symbole $\bar{s}$.

Soit $L \in \mathcal{L}^{G_\eta}(R)$, la construction d'Arthur donne un groupe endoscopique $L[\bar{s}]$ de $L$; si $L=R$ alors $L[\bar{s}]=\overline{M^\Endo_\epsilon}$. Supposons fixé un plongement $\overline{M^\Endo_\epsilon} \hookrightarrow G_\eta[\bar{s}]$, alors la construction d'Arthur est compatible aux sous-groupes de Lévi au sens que l'on peut regarder $L \mapsto L[\bar{s}]$ comme une application $\mathcal{L}^{G_\eta}(R) \to \mathcal{L}^{G_\eta[\bar{s}]}(\overline{M^\Endo_\epsilon})$: le dual de $L[\bar{s}]$ s'identifie au commutant dans $\widehat{G_\eta[\bar{s}]}$ de l'image de $Z_{\hat{L}}^{\Gamma_F, 0}$ dans $Z_{\widehat{\overline{M^\Endo_\epsilon}}}^{\Gamma_F, 0}$. D'après \ref{prop:compatibilite-general}, on obtient
$$ \mathcal{L}^{G_\eta}(R) \to \mathcal{L}^{\overline{G[s]_{\epsilon[s]}}}(\overline{M^\Endo_\epsilon}). $$

Cette application admet une section canonique, donc est surjective: soit $\overline{L^\epsilon} \in \mathcal{L}^{\overline{G[s]_{\epsilon[s]}}}(\overline{M^\Endo_\epsilon})$, on définit $L \in \mathcal{L}^{G_\eta}(R)$ tel que $\hat{L}$ est le commutant dans $\widehat{G_\eta}$ de l'image de  $Z_{\widehat{\overline{L^\epsilon}}}^{\Gamma_F,0}$ dans $Z_{\hat{R}}^{\Gamma_F, 0}$ (cf. \cite{Wa09} (7)). Par construction, $\overline{L^\epsilon}$ est un groupe endoscopique elliptique de $L$, d'où $\mathfrak{a}_L = \mathfrak{a}_{\overline{L^\epsilon}}$ pour cette section $\overline{L^\epsilon} \mapsto L$.

Signalons que $\overline{G[s]_{\epsilon[s]}}$ est un groupe du type obtenu par la construction \ref{def:bar}. Idem pour ses sous-groupes de Lévi. En inversant cette construction-là, on a une bijection canonique
$$ \mathcal{L}^{\overline{G[s]_{\epsilon[s]}}}(\overline{M^\Endo_\epsilon}) \rightiso \mathcal{L}^{G[s]_{\epsilon[s]}}(M^\Endo_\epsilon). $$

Le composé est noté
\begin{gather}\label{eqn:pousser-L}
\xymatrix{
  \mathcal{L}^{G_\eta}(R) \ar[r] & \mathcal{L}^{\overline{G[s]_{\epsilon[s]}}}(\overline{M^\Endo_\epsilon}) \ar[r]^{\sim} & \mathcal{L}^{G[s]_{\epsilon[s]}}(M^\Endo_\epsilon) \\
  L \ar@{|->}[r] & L[\bar{s}] \ar@{|->}[r] & L^\epsilon .
}\end{gather}
qui admet la section $L^\epsilon \mapsto \overline{L^\epsilon} \mapsto L$ décrite précédemment.

On a le diagramme suivant, dont toute flèche est injective et $\Gamma_F$-équivariante.
\begin{gather}\label{eqn:diagramme-Z}
\xymatrix@1@=1pc{
  & Z_{\dualmeta{G}}^\Gred \ar[ld] \ar'[d] [dd] \ar[rrrr] & & & & Z_{\widehat{G[s]}} \ar[dd] \ar[ld] \\
  Z_{\dualmeta{M}}^\Gred \ar[dd] \ar[rrrr] & & & & Z_{\widehat{M^\Endo}} \ar[dd] & \\
  & Z_{\widehat{G_\eta}} \ar[rr] \ar[ld] & & Z_{\widehat{\overline{G[s]_{\epsilon[s]}}}} \ar'[r] [rr] \ar[ld] & & Z_{\widehat{G[s]_{\epsilon[s]}}} \ar[ld] \\
  Z_{\widehat{M_\eta}} \ar[rr] \ar[dd] & & Z_{\widehat{\overline{M^\Endo_\epsilon}}} \ar[rr] \ar@{=}[dd] & & Z_{\widehat{M^\Endo_\epsilon}} & & & \\
  & & & & & & \\
  Z_{\hat{R}} \ar[rr] & & Z_{\widehat{\overline{M^\Endo_\epsilon}}} & & & & 
}\end{gather}

Montrons qu'il est commutatif. En effet, la commutativité est claire sauf pour les deux grandes faces. Pour prouver que $Z_{\dualmeta{M}}^\Gred \to Z_{\widehat{M_\eta}} \to Z_{\widehat{\overline{M^\Endo_\epsilon}}} \to Z_{\widehat{M^\Endo_\epsilon}}$ coïncide avec $Z_{\dualmeta{M}}^\Gred \to Z_{\widehat{M^\Endo}} \to Z_{\widehat{M^\Endo_\epsilon}}$, il suffit de noter que le facteur $\Sp(2m)$ de $M$ ne contribue pas à $Z_{\dualmeta{M}}^\Gred$; donc on se ramène au cas $M = \prod_{i \in I} \GL(n_i)$, pour lequel l'assertion devient évidente. Idem pour la grande face contenant $Z_{\dualmeta{G}}^\Gred$. Désormais, regardons tous ces centres comme des sous-groupes de $Z_{\widehat{\overline{M^\Endo_\epsilon}}}$.

On a un diagramme commutatif similaire
\begin{gather}\label{eqn:diagramme-a}
\xymatrix@1@=1pc{
  & \mathfrak{a}_G \ar[ld] \ar'[d] [dd] \ar[rrrr] & & & & \mathfrak{a}_{G[s]} \ar[dd] \ar[ld] \\
  \mathfrak{a}_M \ar[dd] \ar[rrrr]^{\sim} & & & & \mathfrak{a}_{M^\Endo} \ar[dd] & \\
  & \mathfrak{a}_{G_\eta} \ar[rr] \ar[ld] & & \mathfrak{a}_{\overline{G[s]_{\epsilon[s]}}} \ar'[r]^{\quad\sim} [rr] \ar[ld] & & \mathfrak{a}_{G[s]_{\epsilon[s]}} \ar[ld] \\
  \mathfrak{a}_{M_\eta} \ar[rr] \ar[dd] & & \mathfrak{a}_{\overline{M^\Endo_\epsilon}} \ar[rr]^{\sim} \ar@{=}[dd] & & \mathfrak{a}_{M^\Endo_\epsilon} & & & \\
  & & & & & & \\
  \mathfrak{a}_R \ar[rr]^{\sim} & & \mathfrak{a}_{\overline{M^\Endo_\epsilon}} & & & & 
}\end{gather}

Rappelons qu'une forme quadratique définie positive $W^G(M)$-invariante sur $\mathfrak{a}_M$ est fixée au début; on la transfère vers $\mathfrak{a}_{M^\Endo}$, puis on le prolonge en une forme sur $\mathfrak{a}_{M^\Endo_\epsilon}$ vérifiant des propriétés similaires. Ainsi, on peut supposer que les formes quadratiques sur chaque espace ci-dessus s'obtiennent par ces flèches. Désormais, regardons tous ces espaces comme sous-espaces de $\mathfrak{a}_{M^\Endo_\epsilon}$.

Pour tout $L \in \mathcal{L}^{G_\eta}(R)$, désignons par $\tau_L$ le composé
\begin{gather}\label{eqn:tau_L}
  \tau_L : \; Z_{\dualmeta{M}}^\Gred/Z_{\dualmeta{G}}^\Gred \stackrel{\tau}{\longrightarrow} Z_{\widehat{M_\eta}}^{\Gamma_F}/Z_{\widehat{G_\eta}}^{\Gamma_F} \longrightarrow Z_{\hat{R}}^{\Gamma_F}/Z_{\hat{L}}^{\Gamma_F}.
\end{gather}

Posons $E^{\natural}$ l'ensemble des paires $(t,L) \in 
\left( Z_{\dualmeta{M}}^\Gred/Z_{\dualmeta{G}}^\Gred \right) \times \mathcal{L}^{G_\eta}(R)$ vérifiant les conditions suivantes:
\begin{description}
  \item[(E1)] $s := s_0 t \in \mathcal{E}_{M^\Endo}(\tilde{G})$;
  \item[(E2)] $\bar{s} := \bar{s}_0 \tau_L(t) \in \mathcal{E}_{\overline{M^\Endo_\epsilon}}(L)$;
  \item[(E3)] $d^G_R(M,L) \neq 0$;
  \item[(E4)] $d^{G[s]}_{M^\Endo_\epsilon}(M^\Endo, L^\epsilon) \neq 0$.
\end{description}

Avec ces notations, on définit deux applications
\begin{align*}
  e^{\text{st}}: & E^{\natural} \longrightarrow E^\text{st}, \\
  & (t,L) \longmapsto (L^\epsilon, s); \\
  e^{\text{inst}}: & E^{\natural} \longrightarrow E^\text{inst}, \\
  & (t,L) \longmapsto (L, \bar{s}). \\
\end{align*}

\begin{lemma}\label{prop:est}
  L'application $e^\mathrm{st}$ est bijective.
\end{lemma}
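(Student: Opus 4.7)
The plan is to construct a two-sided inverse $\phi: E^{\text{st}} \to E^\natural$ to $e^{\text{st}}$. Given $(L^\epsilon, s) \in E^{\text{st}}$, set $t := s_0^{-1} s \in Z_{\dualmeta{M}}^\Gred/Z_{\dualmeta{G}}^\Gred$ and let $L \in \mathcal{L}^{G_\eta}(R)$ be the image of $L^\epsilon$ under the canonical section of \eqref{eqn:pousser-L}, i.e.\ the unique Lévi such that $\hat{L}$ is the commutant dans $\widehat{G_\eta}$ de l'image de $Z_{\widehat{\overline{L^\epsilon}}}^{\Gamma_F,0}$ in $Z_{\hat{R}}^{\Gamma_F,0}$, so that $\overline{L^\epsilon}$ devient endoscopique elliptique de $L$. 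Declare $\phi(L^\epsilon, s) := (t, L)$.

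I would first check that $\phi(L^\epsilon, s) \in E^\natural$. Condition (E1) is immediate since $s \in \mathcal{E}_{M^\Endo}(\tilde{G})$ by assumption. For (E2), \ref{prop:compatibilite-general} gives $L[\bar{s}] \simeq \overline{L^\epsilon}$, and by the very choice of the section $\overline{L^\epsilon}$ is elliptic in $L$; this is the defining condition of $\mathcal{E}_{\overline{M^\Endo_\epsilon}}(L)$. Condition (E4) holds by hypothesis, so it remains to deduce (E3). For this I use diagram \eqref{eqn:diagramme-a}: the choice of forms makes the horizontal maps isometric, and the ellipticities $\mathfrak{a}_G = \mathfrak{a}_{G[s]}$, $\mathfrak{a}_M = \mathfrak{a}_{M^\Endo}$, $\mathfrak{a}_R = \mathfrak{a}_{M^\Endo_\epsilon}$ (via $\mathfrak{a}_{\overline{M^\Endo_\epsilon}}$) and $\mathfrak{a}_L = \mathfrak{a}_{L^\epsilon}$ (using (E2) and the bar-operation) identify the orthogonal decompositions $\mathfrak{a}^M_R \oplus \mathfrak{a}^L_R \hookrightarrow \mathfrak{a}^G_R$ and $\mathfrak{a}^{M^\Endo}_{M^\Endo_\epsilon} \oplus \mathfrak{a}^{L^\epsilon}_{M^\Endo_\epsilon} \hookrightarrow \mathfrak{a}^{G[s]}_{M^\Endo_\epsilon}$; hence (E3) $\Leftrightarrow$ (E4).

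Next, I would verify that the two compositions are identities. For $e^{\text{st}} \circ \phi = \identity$: we recover $s = s_0 t$ tautologically, while the canonical section is, by definition, a right inverse to \eqref{eqn:pousser-L}, so $L^\epsilon$ is recovered. For $\phi \circ e^{\text{st}} = \identity$: given $(t, L) \in E^\natural$, we recover $t$ trivially, and it suffices to show that $L$ is the canonical section of its image $L^\epsilon$. Here (E2) is the crucial input — it says precisely that $L[\bar{s}] = \overline{L^\epsilon}$ is elliptic in $L$, which translates into $\mathfrak{a}_L = \mathfrak{a}_{\overline{L^\epsilon}}$, and this equality of cocharacter spaces is exactly what characterizes the canonical section (since $\hat{L}$ is determined as the commutant of a torus determined by $L^\epsilon$).

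The main obstacle will be bookkeeping: making diagram \eqref{eqn:diagramme-a} genuinely commutative so that all the identifications of $\mathfrak{a}$-spaces used in the equivalence (E3) $\Leftrightarrow$ (E4) match up coherently with those used in identifying $L[\bar{s}]$ with $\overline{L^\epsilon}$. Once \ref{prop:compatibilite-general} is invoked at the appropriate spot, the rest of the argument is essentially formal manipulation of Arthur's section construction.
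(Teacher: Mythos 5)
Your proof is correct and follows essentially the same route as the paper's: the paper establishes injectivity by observing that (E2) forces $\mathfrak{a}_L = \mathfrak{a}_{L[\bar s]} = \mathfrak{a}_{L^\epsilon}$ and hence determines $L$, then establishes surjectivity by producing $(t,L)$ from the canonical section of \eqref{eqn:pousser-L} and checking (E1)--(E4) via the same orthogonal-decomposition diagram you use; you package this as a two-sided inverse, which is a cosmetic reorganization. One small correction: the identification $L[\bar s]\simeq \overline{L^\epsilon}$ at the level of Lévi subgroups is not what \ref{prop:compatibilite-general} asserts (that lemma gives $G_\eta[\bar s]\simeq\overline{G[s]_{\epsilon[s]}}$ at the top); it follows instead from the compatibility of Arthur's construction with Lévi subgroups and the definition of the canonical section in \S\ref{sec:E}, which is exactly where the paper draws it from.
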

\begin{proof}
  Soit $(L^\epsilon,s)$ dans l'image de $e^\text{st}$. Par (E2), on a $\mathfrak{a}_L=\mathfrak{a}_{L[\bar{s}]}=\mathfrak{a}_{L^\epsilon}$, ce qui détermine le sous-espace $\mathfrak{a}_L$, donc détermine $L \in \mathcal{L}^{G_\eta}(R)$. D'autre part, l'égalité $s=s_0 t$ détermine $t$. D'où l'injectivité.

  Soit $(L^\epsilon,s) \in E^\text{st}$. Notons $t$ l'élément tel que $s=s_0 t$ et $L$ l'image de $L^\epsilon$ par la section de la suite \eqref{eqn:pousser-L} associée à $s$. Montrons que $(t,L) \in E^{\natural}$. En effet, la définition de $E^\text{st}$ entraîne que
  \begin{gather*}
    s \in \mathcal{E}_{M^\Endo}(\tilde{G}), \\
    \mathfrak{a}^{M^\Endo}_{M^\Endo_\epsilon} \oplus \mathfrak{a}^{L^\epsilon}_{M^\Endo_\epsilon} = \mathfrak{a}^{G[s]}_{M^\Endo_\epsilon}.
  \end{gather*}
  qui ne sont que (E1) et (E4), respectivement.

  Comme $\mathfrak{a}_L = \mathfrak{a}_{L^\epsilon}$ pour la section $L^\epsilon \mapsto L$ de \eqref{eqn:pousser-L}, on voit que (E2) est vérifié. Enfin, \eqref{eqn:diagramme-a} donne un diagramme commutatif
  $$\xymatrix@1@C=1.5pc{
    \mathfrak{a}^G_M \ar@[=][d] & + & \mathfrak{a}^G_L \ar[d] \ar[r] & \mathfrak{a}^G_R \ar[d] \\
    \mathfrak{a}^{G[s]}_{M^\Endo} & + & \mathfrak{a}^{G[s]}_{L^\epsilon} \ar[r] & \mathfrak{a}^{G[s]}_{M^\Endo_\epsilon}.
  }$$
  Les flèches verticales sont des isomorphismes par ellipticité. (E4) équivaut à ce que la somme de la ligne inférieure est directe et la flèche horizontale inférieure est un isomorphisme, donc il en est de même pour la ligne supérieure, ce qui équivaut à (E3). D'où la surjectivité de $e^\text{st}$.
\end{proof}

\begin{lemma}\label{prop:einst}
  L'application $e^\mathrm{inst}$ est surjective, chaque fibre a $[Z_{\hat{L}}^{\Gamma_F} \cap Z_{\dualmeta{M}}^\Gred : Z_{\dualmeta{G}}^\Gred]$ éléments.
\end{lemma}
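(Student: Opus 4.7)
Le plan est de calculer $\ker \tau_L$ pour obtenir la cardinalité des fibres, d'établir la surjectivité par construction explicite, et enfin de vérifier que (E4) et la préservation de (E1) par le noyau résultent automatiquement.

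Commençons par le calcul du noyau. Dans le diagramme commutatif \eqref{eqn:diagramme-Z}, tous les groupes centraux pertinents se plongent dans $Z_{\widehat{\overline{M^\Endo_\epsilon}}}$, et l'application $\tau_L$ de \eqref{eqn:tau_L} s'écrit comme le composé
$$ Z_{\dualmeta{M}}^\Gred/Z_{\dualmeta{G}}^\Gred \longrightarrow Z_{\widehat{M_\eta}}^{\Gamma_F}/Z_{\widehat{G_\eta}}^{\Gamma_F} \longrightarrow Z_{\hat{R}}^{\Gamma_F}/Z_{\hat{L}}^{\Gamma_F}. $$
Une chasse élémentaire dans le diagramme identifie $\ker \tau_L$ à $(Z_{\dualmeta{M}}^\Gred \cap Z_{\hat{L}}^{\Gamma_F})/Z_{\dualmeta{G}}^\Gred$, de cardinal $[Z_{\hat{L}}^{\Gamma_F} \cap Z_{\dualmeta{M}}^\Gred : Z_{\dualmeta{G}}^\Gred]$.

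Pour la surjectivité, soit $(L, \bar{s}) \in E^\mathrm{inst}$. On reprend l'analyse composante par composante de la preuve de \ref{prop:compatibilite-bar}, en décomposant $\bar{s} \in \hat{T}$ selon les facteurs de $\hat{R}$. La condition d'ellipticité (E2) sur $L$, jointe à la forme explicite de $\bar{s}_0$ donnée par \eqref{eqn:s_0^ij} et à la combinatoire des plongements des $\GL_{F_{ij}}(n_{ij})$ dans les facteurs unitaires et symplectiques de $G_\eta$, force les composantes $\bar{s}^{ij}/\bar{s}_0^{ij}$ à appartenir à $\{\pm 1\}$ avec une valeur ne dépendant que de $i \in I$. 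On définit alors $t = ((t_i)_{i \in I}, 1) \in Z_{\dualmeta{M}}^\Gred/Z_{\dualmeta{G}}^\Gred$ avec $t_i$ égal à cette valeur commune. Il en résulte $s_0 t \in \mathcal{E}_{M^\Endo}(\tilde{G})$ par \ref{prop:finitude-s}, soit (E1), et $\bar{s}_0 \tau_L(t) = \bar{s}$ par \ref{prop:compatibilite-general}.

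Vérifions (E4) et la cardinalité des fibres. Sous (E2), les identifications de \eqref{eqn:diagramme-a} donnent $\mathfrak{a}_L = \mathfrak{a}_{L^\epsilon}$, $\mathfrak{a}_M = \mathfrak{a}_{M^\Endo}$, $\mathfrak{a}_R = \mathfrak{a}_{M^\Endo_\epsilon}$, et (E1) donne $\mathfrak{a}_G = \mathfrak{a}_{G[s]}$. Un comptage dimensionnel direct dans \eqref{eqn:diagramme-a} montre alors que les conditions de directité (E3) et (E4) sont équivalentes, d'où (E4). Pour le cardinal: tout $k \in Z_{\dualmeta{M}}^\Gred \cap Z_{\hat{L}}^{\Gamma_F}$ a, par la même analyse composante par composante, ses composantes aux facteurs $\C^\times$ de $Z_{\dualmeta{M}}^\Gred$ forcément dans $\{\pm 1\}$, donc $tk$ satisfait encore (E1). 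La fibre est donc un torseur sous $\ker \tau_L$, du cardinal annoncé. L'étape délicate est précisément cette analyse composante par composante: il faut exploiter le choix \eqref{eqn:s_0^ij} des $\bar{s}_0^{ij}$ et la combinatoire des plongements $\GL_{F_{ij}}(n_{ij})$ dans les facteurs unitaire ou symplectique de $G_\eta$ (détaillée dans \ref{prop:compatibilite-bar}) pour garantir la cohérence des signes à travers les indices $j \in J_i$, ce qui permet à la définition de $t$ d'être bien posée.
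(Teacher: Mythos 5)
Votre stratégie générale (calcul du noyau, surjectivité, vérification de (E4)) suit bien le schéma du papier, et le calcul du noyau ainsi que l'équivalence (E3) $\Leftrightarrow$ (E4) coïncident avec l'argument du texte. Mais il y a un écart réel sur deux points qui sont justement les étapes non triviales.

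Pour la surjectivité de $e^\mathrm{inst}$, vous remplacez l'argument du papier --- qui montre d'abord que $\tau_L^0 : Z_{\dualmeta{M}}^\Gred/Z_{\dualmeta{G}}^\Gred \to Z_{\hat{R}}^{\Gamma_F,0}/Z_{\hat{L}}^{\Gamma_F,0}$ est surjectif par dualité à partir de $d^G_R(M,L) \neq 0$, puis invoque \cite{Ar99} Lemma 1.1 pour la surjectivité de $Z_{\hat{R}}^{\Gamma_F,0}/Z_{\hat{L}}^{\Gamma_F,0} \to Z_{\hat{R}}^{\Gamma_F}/Z_{\hat{L}}^{\Gamma_F}$ --- par une construction explicite ``composante par composante''. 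Cette construction repose sur l'affirmation que les $\bar{s}^{ij}/\bar{s}_0^{ij}$ sont forcés d'appartenir à $\{\pm 1\}$ avec une valeur ne dépendant que de $i \in I$. Or $\bar{s}$ n'est défini que modulo $Z_{\hat{L}}^{\Gamma_F}$: lorsque $L$ absorbe des facteurs $\GL_{F_{ij}}$ dans un facteur $\GL_E$ plus gros dont le centre est un $\C^\times$ entier, la composante $\bar{s}^{ij}/\bar{s}_0^{ij}$ n'a de sens que modulo ce $\C^\times$, et la condition (E2) n'impose aucune contrainte de signe sur elle. L'affirmation ``valeur commune pour tous les $j \in J_i$'' n'est pas justifiée non plus, car les différents $\GL_{F_{ij}}$ pour $j \in J_i$ se répartissent dans des facteurs distincts (unitaire, $\Sp(\overline{\mathcal{V}'_\pm})$, $\SO(\mathcal{V}'_\mp)$) de $G_\eta$.

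Pour (E1), vous l'obtenez par \ref{prop:finitude-s} pour votre $t$ explicite, puis étendez aux autres préimages en prétendant que tout $k \in Z_{\dualmeta{M}}^\Gred \cap Z_{\hat{L}}^{\Gamma_F}$ a des composantes dans $\{\pm 1\}$. Cela n'est pas vrai en général et n'est en tout cas pas démontré. Le papier évite complètement ce problème: il montre que \emph{tout} $t$ avec $\tau_L(t) = \bar{t}$ vérifie (E1), en prouvant $\mathfrak{a}^G_{G[s]} \subset \mathfrak{a}^G_M \cap \mathfrak{a}^G_L \subset \mathfrak{a}^G_R$ puis $\mathfrak{a}^G_{G[s]} = \{0\}$ via (E3). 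C'est exactement l'ellipticité de $s$, donc (E1); aucune description explicite de $t$ n'est requise. Cette démarche donne directement que la fibre est un torseur sous $\ker \tau_L$, sans avoir besoin de savoir à quoi ressemble concrètement ce noyau.
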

\begin{proof}
  Soit $(L,\bar{s}) \in E^\text{inst}$. On a $\bar{s} \in \bar{s}_0 Z_{\hat{R}}^{\Gamma_F}/Z_{\hat{L}}^{\Gamma_F}$ car $Z_{\widehat{M_\eta}}^{\Gamma_F} \subset Z_{\hat{R}}^{\Gamma_F}$. On a un homomorphisme canonique
  $$ \tau_L^0 : Z_{\dualmeta{M}}^\Gred/Z_{\dualmeta{G}}^\Gred \to Z_{\hat{R}}^{\Gamma_F,0}/Z_{\hat{L}}^{\Gamma_F,0}. $$

  Comme $\mathfrak{a}^M_R \oplus \mathfrak{a}^L_R = \mathfrak{a}^G_R$, ou ce qui revient au même, $\mathfrak{a}^G_M \oplus \mathfrak{a}^G_L = \mathfrak{a}^G_R$, on déduit par dualité que
  $Z_{\hat{R}}^{\Gamma_F,0} = Z_{\dualmeta{M}}^\Gred Z_{\hat{L}}^{\Gamma_F,0}$, d'où la surjectivité de $\tau_L^0$. Signalons aussi que $\tau_L$ est le composé de $\tau_L^0$ et $Z_{\hat{R}}^{\Gamma_F,0}/Z_{\hat{L}}^{\Gamma_F,0} \to Z_{\hat{R}}^{\Gamma_F}/Z_{\hat{L}}^{\Gamma_F}$.

  D'après \cite{Ar99} Lemma 1.1, l'homomorphisme $Z_{\hat{R}}^{\Gamma_F,0}/Z_{\hat{L}}^{\Gamma_F,0} \to Z_{\hat{R}}^{\Gamma_F}/Z_{\hat{L}}^{\Gamma_F}$ est surjectif. Écrivons $\bar{s}=\bar{s}_0 \bar{t}$ où $\bar{t} \in Z_{\hat{R}}^{\Gamma_F}/Z_{\hat{L}}^{\Gamma_F}$, alors il existe $t \in Z_{\dualmeta{M}}^\Gred/Z_{\dualmeta{G}}^\Gred$ tel que $\bar{t} = \tau_L(t)$. Le nombre de choix de $t$ est égal à $|\Ker(\tau_L)|=[Z_{\hat{L}}^{\Gamma_F} \cap Z_{\dualmeta{M}}^\Gred : Z_{\dualmeta{G}}^\Gred ]$. Il reste à montrer que $(t,L) \in E^{\natural}$.

  Comme $(L,\bar{s}) \in E^\text{inst}$, les conditions (E2) et (E3) sont automatiquement satisfaites. Posons $s=s_0 t$. Cet élément définit une donnée endoscopique pour $\tilde{G}$, éventuellement non elliptique. Contemplons le diagramme \eqref{eqn:diagramme-a}. Montrons que
  \begin{gather}\label{eqn:3-cap}
    \mathfrak{a}^G_{G[s]} \subset \mathfrak{a}^G_M \cap \mathfrak{a}^G_L .
  \end{gather}

  Pour prouver $\mathfrak{a}^G_{G[s]} \subset \mathfrak{a}^G_M$, on utilise $\mathfrak{a}_{G[s]} \subset \mathfrak{a}_{M^\Endo} = \mathfrak{a}_M$. Pour prouver $\mathfrak{a}^G_{G[s]} \subset \mathfrak{a}^G_L$, rappelons que $\mathfrak{a}_L = \mathfrak{a}_{L[\bar{s}]}$ par (E2); or $L[\bar{s}]$ est un Lévi de $\overline{G[s]_{\epsilon[s]}}$ et $\mathfrak{a}_{\overline{G[s]_{\epsilon[s]}}} = \mathfrak{a}_{G[s]_{\epsilon[s]}} \supset \mathfrak{a}_{G[s]}$, d'où l'inclusion cherchée.

  On a aussi $\mathfrak{a}^G_{G[s]} \subset \mathfrak{a}^G_R$. Vu (E3), on déduit $\mathfrak{a}^G_{G[s]}=\{0\}$ de \eqref{eqn:3-cap}, donc (E1) est vérifié. En utilisant l'argument dans la dernière étape de la preuve de \ref{prop:est}, on en déduit (E4).
\end{proof}

\section{Comparaison des coefficients}\label{sec:comparaison}
\subsection{Réduction}
Soit $\gamma \in M^\Endo_{G-\text{reg}}(F)$.

\begin{lemma}\label{prop:LF-pondere-trivial}
  On a $r^{\tilde{G}}_{M^\Endo,K}(\gamma) = \sum_{s \in \mathcal{E}_{M^\Endo}(\tilde{G})} i_{M^\Endo}(\tilde{G},G[s]) s^{G[s]}_{M^\Endo}(\gamma[s]) = 0$ sauf si, à conjugaison géométrique près, $\gamma$ vérifie les conditions suivantes:
  \begin{itemize}
    \item $\gamma$ est compact avec la décomposition de Jordan topologique $\gamma=\exp(Y)\epsilon$,
    \item $M^\Endo_\epsilon$ est non ramifié.
  \end{itemize}

  En particulier, \ref{prop:LF-pondere} est vérifié si $\gamma$ ne vérifie pas les conditions ci-dessus.
\end{lemma}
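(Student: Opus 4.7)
Le plan est essentiellement de combiner les résultats établis dans les deux sous-sections précédentes. L'énoncé ne fait qu'enregistrer le fait que, en dehors du cadre où la descente de Harish-Chandra est effective (i.e.\ compacité et non ramification), les deux membres de l'égalité de \ref{prop:LF-pondere} s'annulent identiquement, de sorte que la démonstration peut se restreindre au cas décrit.

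Première étape: si $\gamma$ n'est pas compact, on conclut directement par \ref{prop:r-cpt} pour le côté gauche et par \ref{prop:s-cpt} pour le côté droit. Les deux termes valent zéro et l'égalité est triviale.

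Deuxième étape: supposons $\gamma$ compact et écrivons la décomposition de Jordan topologique $\gamma = \exp(Y)\epsilon$. Les deux membres ne dépendent que de la classe de conjugaison stable de $\gamma$ (à gauche c'est la définition de $r^{\tilde{G}}_{M^\Endo,K}$ via la somme sur $\delta \in M(F)/\mathrm{conj}$ et la $\bmu_8$-équivariance des facteurs de transfert; à droite, c'est une propriété connue des fonctions stabilisées, rappelée dans le \S\ref{sec:int-orb}). D'après \cite{Ko82}, toute classe de conjugaison stable semi-simple de $M^\Endo$ contient un élément dont le commutant connexe est quasi-déployé. Quitte à conjuguer géométriquement $\gamma$ et à translater $Y$, $\epsilon$ en conséquence, on peut donc supposer $M^\Endo_\epsilon$ quasi-déployé.

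Troisième étape: si, sous ces hypothèses, $M^\Endo_\epsilon$ n'est pas non ramifié, on invoque \ref{prop:descente-endo-1}(1) pour obtenir $r^{\tilde{G}}_{M^\Endo,K}(\gamma) = 0$, puis \ref{prop:descente-st-1}(1), appliqué aux groupes $M^\Endo \hookrightarrow G[s]$ et à l'élément $\gamma[s] = \exp(Y)\epsilon[s]$ pour chaque $s \in \mathcal{E}_{M^\Endo}(\tilde{G})$ (en remarquant que $M^\Endo_{\epsilon[s]} = M^\Endo_\epsilon$, donc la non-ramification échoue simultanément pour toutes les translations), pour obtenir l'annulation du membre de droite. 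La dernière phrase de l'énoncé, à savoir que \ref{prop:LF-pondere} est vérifié dans les cas exclus, en résulte immédiatement. Il n'y a pas d'obstacle: toute la substance est déjà encapsulée dans \ref{prop:r-cpt}, \ref{prop:s-cpt}, \ref{prop:descente-endo-1} et \ref{prop:descente-st-1}.
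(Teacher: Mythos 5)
Votre démonstration est correcte et suit exactement la même stratégie que celle du texte, qui se contente de citer \ref{prop:r-cpt}, \ref{prop:s-cpt}, \ref{prop:descente-endo-1} et \ref{prop:descente-st-1} sans détail; vous explicitez simplement la réduction au cas quasi-déployé via \cite{Ko82} et la remarque $M^\Endo_{\epsilon[s]}=M^\Endo_\epsilon$, qui sont déjà faites dans le texte juste avant ou dans les preuves des lemmes cités.
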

\begin{proof}
  Cela résulte de \ref{prop:r-cpt}, \ref{prop:s-cpt}, \ref{prop:descente-endo-1} et \ref{prop:descente-st-1}.
\end{proof}

Désormais, supposons que $\gamma$ est compact avec la décomposition de Jordan topologique $\gamma=\exp(Y)\epsilon$ telle que $M^\Endo_\epsilon$ est non ramifié. D'après \ref{prop:descente-endo-1}, on peut choisir $\eta \in K^M$ correspondant à $\epsilon$ tel que $M_\eta$ est non ramifié. On s'est ainsi ramené à la situation dans le \S\ref{sec:E}. Conservons le formalisme-là et posons, pour $(t,L) \in E^{\natural}$,
\begin{align*}
  (L,\bar{s}) & := e^\text{inst}(t,L), \\
  (L^\epsilon,s) & := e^\text{st}(t,L), \\
  c^\text{inst}(t,L) & := d^G_R(M,L) i_{\overline{M^\Endo_\epsilon}}(L,L[\bar{s}]) [Z_{\hat{L}}^{\Gamma_F} \cap Z_{\dualmeta{M}}^\Gred : Z_{\dualmeta{G}}^\Gred]^{-1}, \\
  c^\text{st}(t,L) & := e^{G[s]}_{M^\Endo_\epsilon}(M^\Endo, L^\epsilon) i_{M^\Endo}(\tilde{G},G[s]) \\
  & = d^{G[s]}_{M^\Endo_\epsilon}(M^\Endo,L^\epsilon) [Z_{\widehat{M^\Endo}}^{\Gamma_F} \cap Z_{\widehat{L^\epsilon}}^{\Gamma_F} : Z_{\widehat{G[s]}}^{\Gamma_F}]^{-1} i_{M^\Endo}(\tilde{G},G[s]).
\end{align*}
Indiquons que les intersections des groupes complexes sont prises à l'aide du diagramme \eqref{eqn:diagramme-Z}.

\begin{lemma}\label{prop:developpement-E}
  Supposons vérifié le lemme fondamental pondéré non standard \ref{prop:LF-nonstandard}. Soit $\gamma=\exp(Y)\epsilon$ et $\eta$ comme ci-dessus, où $Y \in \mathfrak{m}^\Endo_\epsilon(F)$. Alors
  \begin{gather*}
    r^{\tilde{G}}_{M^\Endo,K}(\gamma) = \sum_{(t,L) \in E^\natural} c^\mathrm{inst}(t,L) c^{L[\bar{s}],L^\epsilon}_{\overline{M^\Endo_\epsilon}, M^\Endo_\epsilon} \; s^{L^\epsilon}_{M^\Endo_\epsilon}(Y), \\
    \sum_{s \in \mathcal{E}_{M^\Endo}(\tilde{G})} i_{M^\Endo}(\tilde{G},G[s]) s^{G[s]}_{M^\Endo}(\gamma[s]) = \sum_{(t,L) \in E^\natural} c^\mathrm{st}(t,L) \; s^{L^\epsilon}_{M^\Endo_\epsilon}(Y).
  \end{gather*}
\end{lemma}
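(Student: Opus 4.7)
Le plan est de combiner les identités de descente déjà obtenues avec la paramétrisation $E^\natural$ du \S\ref{sec:E}, puis d'invoquer le lemme fondamental pondéré non standard pour relier les intégrales stabilisées sur $\overline{M^\Endo_\epsilon}$ et sur $M^\Endo_\epsilon$.

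Pour le côté stable, je partirais de l'assertion (2) de \ref{prop:descente-st-1}, applicable puisque $M^\Endo_\epsilon$ est supposé non ramifié, qui exprime $\sum_s i_{M^\Endo}(\tilde{G},G[s]) s^{G[s]}_{M^\Endo}(\gamma[s])$ comme une somme sur $E^\text{st}$ de terme général $i_{M^\Endo}(\tilde{G},G[s]) e^{G[s]}_{M^\Endo_\epsilon}(M^\Endo, L^\epsilon) s^{L^\epsilon}_{M^\Endo_\epsilon}(Y)$. Il suffit ensuite de réindexer via la bijection $e^\text{st}: E^\natural \rightiso E^\text{st}$ de \ref{prop:est}: le produit des coefficients coïncide exactement avec $c^\text{st}(t,L)$ par définition, sans aucune manipulation supplémentaire. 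Cette partie est purement formelle.

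Pour le côté instable, je partirais du corollaire suivant \ref{prop:descente-endo-1}:
\[
r^{\tilde{G}}_{M^\Endo,K}(\gamma) = \sum_{(L,\bar{s}) \in E^\text{inst}} d^G_R(M,L) \, i_{\overline{M^\Endo_\epsilon}}(L, L[\bar{s}]) \, s^{L[\bar{s}]}_{\overline{M^\Endo_\epsilon}}(\bar{Y}).
\]
Grâce à \ref{prop:compatibilite-general} et à la construction \eqref{eqn:pousser-L}, le Lévi $L[\bar{s}]$ s'identifie à $\overline{L^\epsilon}$; la paire $(L[\bar{s}], L^\epsilon)$ munie des sous-groupes de Lévi communs $(\overline{M^\Endo_\epsilon}, M^\Endo_\epsilon)$ s'inscrit alors précisément dans le cadre de l'exemple à la fin du \S\ref{sec:non-standard} (modulo des facteurs $\GL$ tautologiques). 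Le lemme fondamental pondéré non standard \ref{prop:LF-nonstandard}, par l'intermédiaire de sa version quotient \ref{prop:LF-nonstandard-SO}, fournit alors $s^{L[\bar{s}]}_{\overline{M^\Endo_\epsilon}}(\bar{Y}) = c^{L[\bar{s}],L^\epsilon}_{\overline{M^\Endo_\epsilon}, M^\Endo_\epsilon} \, s^{L^\epsilon}_{M^\Endo_\epsilon}(Y)$, puisque $\bar{Y}$ et $Y$ se correspondent par valeurs propres par construction. Pour conclure, je réindexerais la somme via la surjection $e^\text{inst}: E^\natural \to E^\text{inst}$ de \ref{prop:einst}; ses fibres ayant toutes pour cardinal $[Z_{\hat{L}}^{\Gamma_F} \cap Z_{\dualmeta{M}}^\Gred : Z_{\dualmeta{G}}^\Gred]$, la division par ce facteur produit exactement le coefficient $c^\text{inst}(t,L)$ voulu.

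La seule vérification non triviale est l'applicabilité du lemme fondamental pondéré non standard à l'étape précédente: il faut s'assurer que l'identification $L[\bar{s}] = \overline{L^\epsilon}$ apparie correctement les facteurs $\Sp(2a)$ et $\SO(2a+1)$ comme dans l'exemple, que l'isomorphisme induit $\mathfrak{a}_{L[\bar{s}]} \simeq \mathfrak{a}_{L^\epsilon}$ est compatible avec la \ref{rem:ns-a} (i.e. donné par $j_* = \identity$ sur les parties pertinentes), et que les formes quadratiques invariantes fixées des deux côtés se correspondent via \eqref{eqn:diagramme-a}. Ces compatibilités étant de nature essentiellement combinatoire et découlant de la descente parabolique des paramètres déjà établie dans \cite{Li09}, les deux identités se ramènent alors à des reindexations pures de sommes finies.
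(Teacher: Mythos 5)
Your proposal is correct and follows essentially the same route as the paper: combine the descent identities from \ref{prop:descente-endo-1} (and its corollary) and \ref{prop:descente-st-1} with the reindexations $e^\mathrm{st}$ (bijective, \ref{prop:est}) and $e^\mathrm{inst}$ (surjective with controlled fibers, \ref{prop:einst}), then apply \ref{prop:LF-nonstandard-SO} to pass from $s^{L[\bar{s}]}_{\overline{M^\Endo_\epsilon}}(\bar{Y})$ to $c^{L[\bar{s}],L^\epsilon}_{\overline{M^\Endo_\epsilon},M^\Endo_\epsilon}\,s^{L^\epsilon}_{M^\Endo_\epsilon}(Y)$. You correctly single out the one non-formal verification (that the identification $L[\bar{s}]=\overline{L^\epsilon}$ yields a non-standard triple as in \S\ref{sec:non-standard} with compatible $\mathfrak{a}$-isomorphism and quadratic forms); the paper settles this by comparing \ref{rem:ns-a} with \ref{def:bar} directly, a slightly more targeted check than your appeal to parabolic descent of parameters in \cite{Li09}.
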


Ici nous utilisons la convention \eqref{eqn:c-SO} pour le coefficient $c^{L[\bar{s}],L^\epsilon}_{\overline{M^\Endo_\epsilon}, M^\Endo_\epsilon}$, qui sera justifiée dans la preuve.

\begin{proof}
  On combine \ref{prop:descente-endo-1}, \ref{prop:descente-st-1}, \ref{prop:est}, \ref{prop:einst} et le fait, noté dans \cite{Li09} \S 8, que $L[\bar{s}]_\text{SC}$ et $L^\epsilon_{\text{SC}}$ font partie d'un triplet non standard. Ce triplet est un produit direct de triplets tautologiques (avec $j_* = \identity$) ou des triplets de type $(\Sp(2a), \Spin(2a+1), j_*)$, où $j_*$ est choisi comme dans le \S\ref{sec:non-standard}, par lequel $(\overline{M^\Endo_\epsilon})_\text{sc}$ correspond à $(M^\Endo_\epsilon)_\text{sc}$.

  De plus, l'application $\mathfrak{a}^{L[\bar{s}]}_{\overline{M^\Endo_\epsilon}} \rightiso \mathfrak{a}^{L^\epsilon}_{M^\Endo_\epsilon}$ induit par $j_*$ coïncide avec celle obtenue par \eqref{eqn:diagramme-a} (ou plutôt \eqref{eqn:diagramme-La}). En effet, il suffit de comparer \ref{rem:ns-a} et \ref{def:bar}. Cela permet de conclure en appliquant le lemme fondamental pondéré non standard \ref{prop:LF-nonstandard-SO}.
\end{proof}

\begin{lemma}\label{prop:cinstcst}
  Pour tout $(t,L) \in E^\natural$, on a l'égalité
  $$ c^\mathrm{inst}(t,L) c^\mathrm{st}(t,L)^{-1} = [Z_{\widehat{L^\epsilon}}^{\Gamma_F} \cap Z_{\widehat{M^\Endo_\epsilon}}^{\Gamma_F,0} : Z_{\widehat{L[\bar{s}]}}^{\Gamma_F} \cap Z_{\widehat{M^\Endo_\epsilon}}^{\Gamma_F,0}]. $$
\end{lemma}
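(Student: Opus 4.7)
La démonstration se décompose naturellement en deux parties: d'abord l'égalité des facteurs ``combinatoires'' $d^{G}_R(M,L) = d^{G[s]}_{M^\Endo_\epsilon}(M^\Endo, L^\epsilon)$, puis la comparaison purement groupe-théorique des indices restants.

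\textbf{Étape 1: égalité des facteurs $d$.} Je pars du diagramme commutatif \eqref{eqn:diagramme-a}. Sous les conditions d'ellipticité (E1) et (E2) de l'ensemble $E^\natural$, ainsi que l'ellipticité tautologique de $\overline{M^\Endo_\epsilon}$ comme groupe endoscopique de $R$, toutes les flèches obliques droites du diagramme sont des isomorphismes. Autrement dit, $\mathfrak{a}_G \rightiso \mathfrak{a}_{G[s]}$, $\mathfrak{a}_M \rightiso \mathfrak{a}_{M^\Endo}$, $\mathfrak{a}_L \rightiso \mathfrak{a}_{L^\epsilon}$, et $\mathfrak{a}_R \rightiso \mathfrak{a}_{\overline{M^\Endo_\epsilon}}=\mathfrak{a}_{M^\Endo_\epsilon}$. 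Par quotient, on obtient des isomorphismes euclidiens $\mathfrak{a}^G_R \rightiso \mathfrak{a}^{G[s]}_{M^\Endo_\epsilon}$, $\mathfrak{a}^M_R \rightiso \mathfrak{a}^{M^\Endo}_{M^\Endo_\epsilon}$ et $\mathfrak{a}^L_R \rightiso \mathfrak{a}^{L^\epsilon}_{M^\Endo_\epsilon}$, compatibles avec les formes quadratiques fixées (transportées par construction). La définition même de $d^{\bullet}_\bullet(\cdot,\cdot)$ ne dépendant que du triplet d'espaces euclidiens $(\mathfrak{a}^M_R, \mathfrak{a}^L_R, \mathfrak{a}^G_R)$, l'égalité en résulte immédiatement.

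\textbf{Étape 2: réduction au calcul d'indices.} Après annulation des facteurs $d$, ce qui reste à établir est
$$ \frac{i_{\overline{M^\Endo_\epsilon}}(L,L[\bar{s}])}{i_{M^\Endo}(\tilde{G},G[s])} \cdot \frac{[Z_{\widehat{M^\Endo}}^{\Gamma_F} \cap Z_{\widehat{L^\epsilon}}^{\Gamma_F} : Z_{\widehat{G[s]}}^{\Gamma_F}]}{[Z_{\hat{L}}^{\Gamma_F} \cap Z_{\dualmeta{M}}^\Gred : Z_{\dualmeta{G}}^\Gred]} = [Z_{\widehat{L^\epsilon}}^{\Gamma_F} \cap Z_{\widehat{M^\Endo_\epsilon}}^{\Gamma_F,0} : Z_{\widehat{L[\bar{s}]}}^{\Gamma_F} \cap Z_{\widehat{M^\Endo_\epsilon}}^{\Gamma_F,0}]. $$
L'idée est de voir toutes les sous-groupes en jeu comme sous-groupes d'un même groupe abélien ambiant: vu le diagramme \eqref{eqn:diagramme-Z}, on plonge tout dans $Z_{\widehat{M^\Endo_\epsilon}}^{\Gamma_F}$, les composantes neutres se retrouvant dans $Z_{\widehat{M^\Endo_\epsilon}}^{\Gamma_F,0} = Z_{\widehat{\overline{M^\Endo_\epsilon}}}^{\Gamma_F,0}$ (cf. \ref{def:bar}). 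On explicite alors
$$i_{\overline{M^\Endo_\epsilon}}(L,L[\bar{s}]) = \frac{[Z_{\widehat{\overline{M^\Endo_\epsilon}}}^{\Gamma_F} : Z_{\hat{R}}^{\Gamma_F}]}{[Z_{\widehat{L[\bar{s}]}}^{\Gamma_F} : Z_{\hat{L}}^{\Gamma_F}]}, \qquad i_{M^\Endo}(\tilde{G},G[s]) = \frac{[Z_{\widehat{M^\Endo}}^{\Gamma_F} : Z_{\dualmeta{M}}^\Gred]}{[Z_{\widehat{G[s]}}^{\Gamma_F} : Z_{\dualmeta{G}}^\Gred]},$$
et l'on exploite la multiplicativité des indices et la formule élémentaire $[A:A\cap B]=[AB:B]$, combinées à la séparation composante neutre/quotient fini, pour regrouper les facteurs.

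\textbf{Étape 3: identification finale.} Le cœur du calcul consiste à établir, carré commutatif par carré commutatif de \eqref{eqn:diagramme-Z}, l'identité
$$[Z_{\widehat{L^\epsilon}}^{\Gamma_F} \cap Z_{\widehat{M^\Endo_\epsilon}}^{\Gamma_F,0} : Z_{\widehat{L[\bar{s}]}}^{\Gamma_F} \cap Z_{\widehat{M^\Endo_\epsilon}}^{\Gamma_F,0}] = \frac{[Z_{\widehat{L^\epsilon}}^{\Gamma_F} : Z_{\widehat{L[\bar{s}]}}^{\Gamma_F}]}{[\cdots]}$$
où le dénominateur $[\cdots]$ combine les contributions ``non connexes''. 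En combinant les descriptions explicites des facteurs $\U$, $\Sp$, $\SO$ fournies par la construction \ref{def:bar} et l'invariance sous $\tau_L$ (cf. \eqref{eqn:tau_L}) de la flèche $Z_{\dualmeta{M}}^\Gred/Z_{\dualmeta{G}}^\Gred \to Z_{\hat{R}}^{\Gamma_F,0}/Z_{\hat{L}}^{\Gamma_F,0}$ (surjective, vue dans la preuve de \ref{prop:einst}), on ramène tout à une égalité d'indices dans le groupe abélien de type fini $Z_{\widehat{\overline{M^\Endo_\epsilon}}}^{\Gamma_F}/Z_{\widehat{\overline{M^\Endo_\epsilon}}}^{\Gamma_F,0}$. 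L'application répétée de la suite exacte reliant composante neutre et groupe des composantes, à l'instar du Lemma 1.1 d'Arthur \cite{Ar99}, conclut.

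\textbf{Obstacle principal.} La difficulté tient essentiellement à la comptabilité: il faut suivre simultanément les huit centres apparaissant dans \eqref{eqn:diagramme-Z}, leurs composantes neutres et leurs quotients finis, en distinguant à chaque étape ce qui provient du revêtement métaplectique (centres $Z^\Gred$) et ce qui provient de l'endoscopie standard. L'argument est rendu viable par le choix astucieux \eqref{eqn:s_0^ij} effectué au \S\ref{sec:nouvelles-donnees}, qui assure la cohérence des deux voies de descente comparées par \ref{prop:compatibilite-general}.
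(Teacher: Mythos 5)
Your Step 1 is exactly the paper's Lemme \ref{prop:dd} and is argued identically via \eqref{eqn:diagramme-La}, so that part is fine. Step 2 correctly unwinds the definitions and records what remains to be proved; so far so good.

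The problem is Step 3, which is the heart of the matter and is only sketched. You write a displayed identity with a placeholder ``$[\cdots]$'' in the denominator that you never specify, and then assert that repeated use of the exact sequence from \cite{Ar99} Lemma~1.1 concludes. That is precisely where the nontrivial bookkeeping lives, and the proposal doesn't carry it out. In particular, the invocation of $\tau_L$-invariance and of the surjectivity of $\tau_L^0$ (from the proof of \ref{prop:einst}) is a red herring: those facts are needed to show $e^{\mathrm{inst}}$ is surjective, not to compare the coefficients $c^{\mathrm{inst}}$ and $c^{\mathrm{st}}$.

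The organizational device that makes the computation tractable in the paper is to normalize each of $c^{\mathrm{inst}}$ and $c^{\mathrm{st}}$ by the corresponding $d$-factor \emph{separately} and to observe (Lemmes \ref{prop:cinst} and \ref{prop:cst}) that both quotients reduce to inverse indices computed \emph{relative to the same reference group} $Z_{\dualmeta{G}}^\Gred$, with the same ``ceiling'' $Z_{\widehat{M^\Endo}}^{\Gamma_F,0}=Z_{\dualmeta{M}}^\Gred$:
$$ \frac{c^{\mathrm{inst}}(t,L)}{d^G_R(M,L)} = [\,Z_{\widehat{L[\bar{s}]}}^{\Gamma_F}\cap Z_{\widehat{M^\Endo}}^{\Gamma_F,0} : Z_{\dualmeta{G}}^\Gred\,]^{-1},
\qquad
\frac{c^{\mathrm{st}}(t,L)}{d^{G[s]}_{M^\Endo_\epsilon}(M^\Endo,L^\epsilon)} = [\,Z_{\widehat{L^\epsilon}}^{\Gamma_F}\cap Z_{\widehat{M^\Endo}}^{\Gamma_F,0} : Z_{\dualmeta{G}}^\Gred\,]^{-1}. $$
Dividing gives the index relative to $Z_{\widehat{M^\Endo}}^{\Gamma_F,0}$, and a final step (using $A\cap BC=(A\cap B)C$ for $C:=Z_{\widehat{L[\bar{s}]}}^{\Gamma_F,0}=Z_{\widehat{L^\epsilon}}^{\Gamma_F,0}$ together with $Z_{\widehat{M^\Endo_\epsilon}}^{\Gamma_F,0}=Z_{\widehat{M^\Endo}}^{\Gamma_F,0}C$ from (E4)) converts the reference from $Z_{\widehat{M^\Endo}}^{\Gamma_F,0}$ to $Z_{\widehat{M^\Endo_\epsilon}}^{\Gamma_F,0}$ as in the statement. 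None of this appears in your Step 3; without identifying this common normalization and the conversion from $Z_{\widehat{M^\Endo}}^{\Gamma_F,0}$ to $Z_{\widehat{M^\Endo_\epsilon}}^{\Gamma_F,0}$, the sketch does not constitute a proof.
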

On démontrera ce résultat combinatoire dans le \S\ref{sec:yoga-centres}.  En admettant \ref{prop:cinstcst} pour l'instant, montrons notre théorème principal.
\begin{proof}[Démonstration de \ref{prop:LF-pondere}]
  D'après \ref{prop:LF-pondere-trivial} et \ref{prop:developpement-E}, il suffit de fixer $(\epsilon,\eta)$ comme ce que l'on a fait dans cette section, et prouver que
  \begin{gather}
    c^\text{inst}(t,L)^{-1} c^\text{st}(t,L) = c^{L[\bar{s}],L^\epsilon}_{\overline{M^\Endo_\epsilon}, M^\Endo_\epsilon}
  \end{gather}
  pour tout $(t,L) \in E^\natural$. D'après \ref{prop:cinstcst}, 
  $$ c^\mathrm{inst}(t,L)^{-1} c^\mathrm{st}(t,L) = [Z_{\widehat{L^\epsilon}}^{\Gamma_F} \cap Z_{\widehat{M^\Endo_\epsilon}}^{\Gamma_F,0} : Z_{\widehat{L[\bar{s}]}}^{\Gamma_F} \cap Z_{\widehat{M^\Endo_\epsilon}}^{\Gamma_F,0}]^{-1}.  $$

  Rappelons que $L[\bar{s}] \in \mathcal{L}^{\overline{G[s]_{\epsilon[s]}}}(\overline{M^\Endo_\epsilon})$ s'écrit sous la forme
  $$\xymatrix{
    L[\bar{s}] \ar@{=}[r] & L_0 & \times & \Sp(2a) & \times & \Sp(2b) \\
    \overline{M^\Endo_\epsilon} \ar@{^{(}->}[u]^{\text{Lévi}} \ar@{=}[r] & M_0 \ar@{^{(}->}[u] & \times & \prod \GL(\cdot) \times \Sp(2a^\flat) \ar@{^{(}->}[u] & \times & \prod \GL(\cdot) \times \Sp(2b^\flat) \ar@{^{(}->}[u]_{\text{Lévi}}
  }$$
  où $a,b,a^\flat,b^\flat \in \Z_{\geq 0}$, et $L_0$, $M_0$ n'ont aucun facteur direct de type $\SO$ impair déployé, cf. la description de $\overline{M^\Endo_\epsilon}$. Selon la définition \ref{def:bar}, on en déduit
  $$\xymatrix{
    L^\epsilon \ar@{=}[r] & L_0 & \times & \SO(2a+1) & \times & \SO(2b+1) \\
    M^\Endo_\epsilon \ar@{^{(}->}[u]^{\text{Lévi}} \ar@{=}[r] & M_0 \ar@{^{(}->}[u] & \times & \prod \GL(\cdot) \times \SO(2a^\flat + 1) \ar@{^{(}->}[u] & \times & \prod \GL(\cdot) \times \SO(2b^\flat + 1) \ar@{^{(}->}[u]_{\text{Lévi}} .
  }$$

  D'où
  \begin{multline*}
    [Z_{\widehat{L^\epsilon}}^{\Gamma_F} \cap Z_{\widehat{M^\Endo_\epsilon}}^{\Gamma_F,0} : Z_{\widehat{L[\bar{s}]}}^{\Gamma_F} \cap Z_{\widehat{M^\Endo_\epsilon}}^{\Gamma_F,0}]^{-1} = \\
    = \left| Z_{\Sp(2a,\C)} \cap (\prod Z_{\GL(\cdot,\C)} \times \{1\}) \right|^{-1} \cdot \left|Z_{\Sp(2b,\C)} \cap (\prod Z_{\GL(\cdot,\C)} \times \{1\}) \right|^{-1}.
  \end{multline*}

  Ces expressions s'évaluent sans difficulté:
  \begin{gather}\label{eqn:Z-calcul}
    \left| Z_{\Sp(2a,\C)} \cap (\prod Z_{\GL(\cdot,\C)} \times \{1\}) \right|^{-1}  = \begin{cases}
    \frac{1}{2}, & \text{si } a > 0, a^\flat=0, \\
    1, & \text{sinon}.
  \end{cases}\end{gather}

  Idem pour $b, b^\flat$ au lieu de $a, a^\flat$. D'autre part, $c^{L[\bar{s}],L^\epsilon}_{\overline{M^\Endo_\epsilon}, M^\Endo_\epsilon}$ est égal au produit $c^{\overline{G_a}, G_a}_{\overline{M_a}, M_a} \cdot c^{\overline{G_b}, G_b}_{\overline{M_b}, M_b}$, où $G_a = \SO(2a+1)$, $M_a = \prod\GL(\cdot) \times \SO(2a^\flat+1) \subset G_a$, et $\overline{G_a}$, $\overline{M_a}$ sont définies selon \ref{def:bar}. Idem pour $b,b^\flat$ au lieu de $a,a^\flat$. Vu \ref{prop:coef-nonstandard}, $c^{\overline{G_a}, G_a}_{\overline{M_a}, M_a}$ admet la même description ci-dessus que \eqref{eqn:Z-calcul}, et idem pour $c^{\overline{G_b}, G_b}_{\overline{M_b}, M_b}$. On conclut que
  $$ [Z_{\widehat{L^\epsilon}}^{\Gamma_F} \cap Z_{\widehat{M^\Endo_\epsilon}}^{\Gamma_F,0} : Z_{\widehat{L[\bar{s}]}}^{\Gamma_F} \cap Z_{\widehat{M^\Endo_\epsilon}}^{\Gamma_F,0}]^{-1} = c^{L[\bar{s}],L^\epsilon}_{\overline{M^\Endo_\epsilon}, M^\Endo_\epsilon}, $$
  ce qui fallait démontrer.
\end{proof}

\subsection{Yoga de centres}\label{sec:yoga-centres}
Nous nous proposons d'établir \ref{prop:cinstcst}. Fixons $(t,L) \in E^\natural$ et conservons les notations précédentes. On a les variantes suivantes des diagrammes \eqref{eqn:diagramme-Z}, \eqref{eqn:diagramme-a}:

\begin{gather}\label{eqn:diagramme-LZ}
\xymatrix@1@=1pc{
  & Z_{\dualmeta{G}}^\Gred \ar[ld] \ar'[d] [dd] \ar[rrrr] & & & & Z_{\widehat{G[s]}} \ar[dd] \ar[ld] \\
  Z_{\dualmeta{M}}^\Gred \ar[dd] \ar[rrrr] & & & & Z_{\widehat{M^\Endo}} \ar[dd] & \\
  & Z_{\widehat{L}} \ar[rr] \ar[ld] & & Z_{\widehat{L[\bar{s}]}} \ar'[r] [rr] \ar[ld] & & Z_{\widehat{L^\epsilon}} \ar[ld] \\
  Z_{\widehat{R}} \ar[rr] & & Z_{\widehat{\overline{M^\Endo_\epsilon}}} \ar[rr] & & Z_{\widehat{M^\Endo_\epsilon}} & & &
}\end{gather}
dont toute flèche est injective et $\Gamma_F$-équivariante, et

\begin{gather}\label{eqn:diagramme-La}
\xymatrix@1@=1pc{
  & \mathfrak{a}_G \ar[ld] \ar'[d] [dd] \ar[rrrr] & & & & \mathfrak{a}_{G[s]} \ar[dd] \ar[ld] \\
  \mathfrak{a}_M \ar[dd] \ar[rrrr] & & & & \mathfrak{a}_{M^\Endo} \ar[dd] & \\
  & \mathfrak{a}_L \ar[rr] \ar[ld] & & \mathfrak{a}_{L[s]} \ar'[r] [rr] \ar[ld] & & \mathfrak{a}_{L^\epsilon} \ar[ld] \\
  \mathfrak{a}_R \ar[rr] & & \mathfrak{a}_{\overline{M^\Endo_\epsilon}} \ar[rr] & & \mathfrak{a}_{M^\Endo_\epsilon} & & &
}\end{gather}
dont toute flèche est injective et toute flèche horizontale est un isomorphisme, car les données endoscopiques en vue sont toutes elliptiques.

\begin{lemma}\label{prop:dd}
  On a l'égalité
  $$ d^G_R(M,L) = d^{G[s]}_{M^\Endo_\epsilon}(M^\Endo, L^\epsilon). $$
\end{lemma}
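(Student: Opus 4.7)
Le plan est d'exploiter le diagramme \eqref{eqn:diagramme-La}, dont toutes les flèches horizontales sont des isomorphismes (par ellipticité des diverses données endoscopiques en jeu), pour identifier les applications $\Sigma$ intervenant dans les deux définitions de $d$.

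D'abord, rappelons la mise en place effectuée dans le \S\ref{sec:E}: on a fixé une forme quadratique définie positive $W^G(M)$-invariante sur $\mathfrak{a}_M$, on l'a transportée vers $\mathfrak{a}_{M^\Endo}$, puis prolongée à $\mathfrak{a}_{M^\Endo_\epsilon}$; toutes les formes quadratiques sur les espaces apparaissant dans \eqref{eqn:diagramme-La} sont obtenues par ces flèches. En conséquence, chaque flèche horizontale de \eqref{eqn:diagramme-La} est non seulement un isomorphisme linéaire, mais une isométrie pour les formes quadratiques retenues. De même, la commutativité de \eqref{eqn:diagramme-La} entraîne la compatibilité de ces isométries avec les inclusions verticales.

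Ensuite, en prenant les compléments orthogonaux, on en déduit des isométries
\begin{align*}
  \mathfrak{a}^M_R & \rightiso \mathfrak{a}^{M^\Endo}_{\overline{M^\Endo_\epsilon}} \rightiso \mathfrak{a}^{M^\Endo}_{M^\Endo_\epsilon}, \\
  \mathfrak{a}^L_R & \rightiso \mathfrak{a}^{L[\bar{s}]}_{\overline{M^\Endo_\epsilon}} \rightiso \mathfrak{a}^{L^\epsilon}_{M^\Endo_\epsilon}, \\
  \mathfrak{a}^G_R & \rightiso \mathfrak{a}^{G[s]}_{\overline{M^\Endo_\epsilon}} \rightiso \mathfrak{a}^{G[s]}_{M^\Endo_\epsilon}.
\end{align*}

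Ces isométries étant fonctorielles, elles transforment l'application somme $\Sigma: \mathfrak{a}^M_R \oplus \mathfrak{a}^L_R \to \mathfrak{a}^G_R$ en l'application somme $\Sigma: \mathfrak{a}^{M^\Endo}_{M^\Endo_\epsilon} \oplus \mathfrak{a}^{L^\epsilon}_{M^\Endo_\epsilon} \to \mathfrak{a}^{G[s]}_{M^\Endo_\epsilon}$; en particulier la première est un isomorphisme si et seulement si la seconde l'est. Comme les mesures utilisées dans la définition de $d$ sont celles associées aux formes quadratiques, et comme les flèches en question sont isométriques, le rapport des mesures est le même des deux côtés. On obtient ainsi l'égalité cherchée, sans que le point réellement délicat ne soit ailleurs que dans la vérification de compatibilité des formes quadratiques — vérification qui a déjà été faite lors de la construction de $E^\natural$.
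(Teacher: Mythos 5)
Votre démonstration est correcte et suit essentiellement la même voie que celle de l'article: elle repose sur le diagramme \eqref{eqn:diagramme-La}, dont les flèches horizontales sont des isométries pour les formes quadratiques choisies, ce qui identifie les deux applications $\Sigma$ et donc les rapports de mesures. La seule différence, purement cosmétique, est que vous travaillez directement avec la décomposition $\mathfrak{a}^M_R \oplus \mathfrak{a}^L_R \to \mathfrak{a}^G_R$ de la définition, alors que l'article passe à la décomposition équivalente $\mathfrak{a}^G_M \oplus \mathfrak{a}^G_L \to \mathfrak{a}^G_R$ (en prenant les compléments orthogonaux, comme rappelé au début du \S\ref{sec:descente-int}).
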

\begin{proof}
  Contemplons \eqref{eqn:diagramme-La}. La définition de $E^\natural$ fournit le diagramme commutatif
  $$\xymatrix@1@C=1.5pc{
    \mathfrak{a}^G_M \ar@{=}[d] & \oplus & \mathfrak{a}^G_L \ar[r]^{\sim} \ar@{=}[d] & \mathfrak{a}^G_R \ar@{=}[d] \\
    \mathfrak{a}^{G[s]}_{M^\Endo} & \oplus & \mathfrak{a}^{G[s]}_{L^\epsilon} \ar[r]^{\sim} & \mathfrak{a}^{G[s]}_{M^\Endo_\epsilon}.
  }$$
  Les égalités verticales préservent les formes quadratiques choisies (rappelons \eqref{eqn:diagramme-a} et la remarque qui le suit). Les facteurs $d^*_*(\cdot,\cdot)$ sont les rapports des mesures relativement aux flèches horizontales, donc sont égaux.
\end{proof}

Les preuves suivantes reposeront sur \eqref{eqn:diagramme-LZ} et deux faits.
\begin{enumerate}
  \item Soient $H$ un $F$-groupe réductif connexe et $S$ un sous-groupe de Lévi de $H$. Supposons qu'ils sont munis de données de L-groupes compatibles. Alors
  $$ Z_{\hat{S}}^{\Gamma_F} = Z_{\hat{H}}^{\Gamma_F} Z_{\hat{S}}^{\Gamma_F,0}. $$
  \item Soient $a, A, B$ des sous-groupes dans un groupe commutatif tels que $a \subset A$. Alors
  $$ A \cap (Ba) = (A \cap B)a. $$
\end{enumerate}
Le premier est \cite{Ar99} Lemma 1.1 et le deuxième est élémentaire.

\begin{lemma}\label{prop:cinst}
  On a l'égalité
  \begin{gather*}
    \frac{c^\mathrm{inst}(t,L)}{d^G_R(M,L)} = [Z_{\widehat{L[\bar{s}]}}^{\Gamma_F} \cap Z_{\widehat{M^\Endo}}^{\Gamma_F,0} : Z_{\dualmeta{G}}^\Gred]^{-1} .
  \end{gather*}
\end{lemma}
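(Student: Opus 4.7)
L'assertion est une identité purement combinatoire portant sur des indices de sous-groupes commensurables. En substituant la définition de $i_{\overline{M^\Endo_\epsilon}}(L,L[\bar s])$ et celle de $c^\mathrm{inst}(t,L)$, l'égalité cherchée équivaut à
\[
[Z_{\widehat{L[\bar s]}}^{\Gamma_F} \cap Z_{\widehat{M^\Endo}}^{\Gamma_F,0} : Z_{\dualmeta G}^\Gred] \cdot [Z_{\widehat{\overline{M^\Endo_\epsilon}}}^{\Gamma_F} : Z_{\hat R}^{\Gamma_F}] = [Z_{\widehat{L[\bar s]}}^{\Gamma_F} : Z_{\hat L}^{\Gamma_F}] \cdot [Z_{\hat L}^{\Gamma_F} \cap Z_{\dualmeta M}^\Gred : Z_{\dualmeta G}^\Gred].
\]
Tous ces groupes se plongent canoniquement dans $Z_{\widehat{\overline{M^\Endo_\epsilon}}}^{\Gamma_F}$ via \eqref{eqn:diagramme-LZ}; abrégeons-les en $A,B,C,D,F,G$ respectivement (dans l'ordre des définitions), et notons $\cdot^0$ la composante neutre.

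Le calcul procède en trois observations. \emph{Premièrement}, $F = \prod_i \C^\times \times \{1\}$ est connexe, donc $F = F^0$; par ailleurs, l'ellipticité de $s_0 \in \mathcal{E}_\mathrm{ell}(\tilde M)$ (voir \ref{prop:endo-ell}) donne $Z_{\widehat{M^\Endo}}^{\Gamma_F,0} = F$, d'où $A \cap Z_{\widehat{M^\Endo}}^{\Gamma_F,0} = A \cap F$. \emph{Deuxièmement}, l'ellipticité de la donnée $\overline{M^\Endo_\epsilon}/R$ fournit $D^0 = C^0$; le lemme d'Arthur (fait 1) appliqué aux paires de Lévi $R \subset L$ et $\overline{M^\Endo_\epsilon} \subset L[\bar s]$ donne $C = B C^0$ et $D = A C^0$. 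Un calcul direct avec la loi modulaire (fait 2) fournit alors
\[
[D:C] = [A:B]\cdot[A \cap C^0 : B \cap C^0]^{-1}.
\]
\emph{Troisièmement}, la condition (E3), soit $\mathfrak a^G_M \oplus \mathfrak a^G_L = \mathfrak a^G_R$, se dualise via \eqref{eqn:diagramme-La} et l'identification métaplectique $\mathfrak a_M \leftrightarrow Z_{\dualmeta M}^\Gred$ en l'égalité $C^0 = B^0 \cdot F$ dans $D$. Comme $B^0 \subset B \subset A$, la loi modulaire fournit $A \cap C^0 = B^0(A \cap F)$ et $B \cap C^0 = B^0(B \cap F)$; en passant au quotient par $B^0$ et en utilisant l'inclusion $F \cap B^0 \subset B \cap F \subset A \cap F$, on obtient
\[
[A \cap C^0 : B \cap C^0] = [A \cap F : B \cap F].
\]

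En combinant ces trois observations, l'identité cherchée se réduit à
\[
[A \cap F : G] = [A \cap F : B \cap F] \cdot [B \cap F : G],
\]
qui n'est que la multiplicativité des indices dans la chaîne $G \subset B \cap F \subset A \cap F$.

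L'obstacle principal réside dans la dualisation précise de (E3) : il faut vérifier que l'isomorphisme $\mathfrak a^G_M \oplus \mathfrak a^G_L \rightiso \mathfrak a^G_R$ fourni par \eqref{eqn:diagramme-La} se traduit bien par l'égalité $C^0 = B^0 \cdot F$ au niveau des groupes, compatible avec les plongements du diagramme \eqref{eqn:diagramme-LZ}. C'est la cohérence entre les formalismes classique et métaplectique, à savoir le remplacement de $Z_{\hat M}^{\Gamma_F,0}$ par $Z_{\dualmeta M}^\Gred$ sans altérer la structure duale aux espaces $\mathfrak a$, qu'il faut surveiller avec soin.
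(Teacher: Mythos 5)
Your proof is correct and relies on essentially the same ingredients as the paper's own argument: Arthur's lemma (fait 1), the modular law (fait 2), the consequence $Z_{\hat R}^{\Gamma_F,0} = Z_{\hat L}^{\Gamma_F,0}\,Z_{\dualmeta M}^\Gred$ of (E3) by duality, and the identification $Z_{\dualmeta M}^\Gred = Z_{\widehat{M^\Endo}}^{\Gamma_F,0}$; the only difference is bookkeeping (you route the reduction through $C^0$ and apply the modular law twice, whereas the paper first combines $C=BC^0$ and $C^0=B^0F$ into $C=BF$ before applying it once). The ``obstacle'' you flag at the end is precisely what the paper itself asserts as following from ``l'hypothèse $d^G_R(M,L)\neq 0$ et dualité'', so there is no genuine gap.
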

\begin{proof}
  Notons $c_1$ le terme à gauche dans l'assertion. En déroulant les définitions, on voit que
  $$ c_1 = [Z_{\widehat{\overline{M^\Endo_\epsilon}}}^{\Gamma_F} : Z_{\hat{R}}^{\Gamma_F}] [Z_{\widehat{L[\bar{s}]}}^{\Gamma_F} : Z_{\hat{L}}^{\Gamma_F}]^{-1} [Z_{\hat{L}}^{\Gamma_F} \cap Z_{\dualmeta{M}}^\Gred : Z_{\dualmeta{G}}^\Gred]^{-1}. $$

  Comme $Z_{\widehat{\overline{M^\Endo_\epsilon}}}^{\Gamma_F} = Z_{\widehat{L[\bar{s}]}}^{\Gamma_F} Z_{\widehat{\overline{M^\Endo_\epsilon}}}^{\Gamma_F,0}$ et $Z_{\widehat{\overline{M^\Endo_\epsilon}}}^{\Gamma_F,0}=Z_{\hat{R}}^{\Gamma_F,0}$, on a la suite exacte
  $$ 1 \to \frac{Z_{\widehat{L[\bar{s}]}}^{\Gamma_F} \cap Z_{\hat{R}}^{\Gamma_F}}{Z_{\hat{L}}^{\Gamma_F}} \to \frac{Z_{\widehat{L[\bar{s}]}}^{\Gamma_F}}{Z_{\hat{L}}^{\Gamma_F}} \to \frac{Z_{\widehat{\overline{M^\Endo_\epsilon}}}^{\Gamma_F}}{Z_{\hat{R}}^{\Gamma_F}} \to 1 . $$

  Donc
  $$ c_1 = [Z_{\widehat{L[\bar{s}]}}^{\Gamma_F} \cap Z_{\hat{R}}^{\Gamma_F} : Z_{\hat{L}}^{\Gamma_F}]^{-1} [Z_{\hat{L}}^{\Gamma_F} \cap Z_{\dualmeta{M}}^\Gred : Z_{\dualmeta{G}}^\Gred]^{-1}. $$

  On a aussi
  \begin{align*}
    Z_{\hat{R}}^{\Gamma_F} & = Z_{\hat{L}}^{\Gamma_F} Z_{\hat{R}}^{\Gamma_F,0}, \\
    Z_{\hat{R}}^{\Gamma_F,0} & = Z_{\hat{L}}^{\Gamma_F,0} Z_{\dualmeta{M}}^\Gred
  \end{align*}
  où la dernière égalité découle de l'hypothèse $d^G_R(M,L) \neq 0$ et dualité. D'où $Z_{\hat{R}}^{\Gamma_F} = Z_{\hat{L}}^{\Gamma_F} Z_{\dualmeta{M}}^\Gred$, donc
  $$ Z_{\widehat{L[\bar{s}]}}^{\Gamma_F} \cap Z_{\hat{R}}^{\Gamma_F} = Z_{\widehat{L[\bar{s}]}}^{\Gamma_F} \cap \left( Z_{\dualmeta{M}}^\Gred Z_{\hat{L}}^{\Gamma_F}\right) = (Z_{\widehat{L[\bar{s}]}}^{\Gamma_F} \cap Z_{\dualmeta{M}}^\Gred) Z_{\hat{L}}^{\Gamma_F} $$
  car $Z_{\hat{L}}^{\Gamma_F} \subset Z_{\widehat{L[\bar{s}]}}^{\Gamma_F}$. On en déduit
  $$ \frac{Z_{\widehat{L[\bar{s}]}}^{\Gamma_F} \cap Z_{\dualmeta{M}}^\Gred}{Z_{\hat{L}}^{\Gamma_F} \cap Z_{\dualmeta{M}}^\Gred} \rightiso \frac{Z_{\widehat{L[\bar{s}]}}^{\Gamma_F} \cap Z_{\hat{R}}^{\Gamma_F}}{Z_{\hat{L}}^{\Gamma_F}}. $$

  Donc
  \begin{align*}
    c_1 & = [Z_{\widehat{L[\bar{s}]}}^{\Gamma_F} \cap Z_{\dualmeta{M}}^\Gred : Z_{\hat{L}}^{\Gamma_F} \cap Z_{\dualmeta{M}}^\Gred]^{-1} [Z_{\hat{L}}^{\Gamma_F} \cap Z_{\dualmeta{M}}^\Gred : Z_{\dualmeta{G}}^\Gred]^{-1} \\
    & = [Z_{\widehat{L[\bar{s}]}}^{\Gamma_F} \cap Z_{\dualmeta{M}}^\Gred : Z_{\dualmeta{G}}^\Gred]^{-1}.
  \end{align*}

  Il reste à observer que $Z_{\dualmeta{M}}^\Gred = Z_{\widehat{M^\Endo}}^{\Gamma_F,0}$.
\end{proof}

\begin{lemma}\label{prop:cst}
  On a l'égalité
  \begin{gather*}
    \frac{c^\mathrm{st}(t,L)}{d^{G[s]}_{M^\Endo_\epsilon}(M^\Endo, L^\epsilon)} = [Z_{\widehat{L^\epsilon}}^{\Gamma_F} \cap Z_{\widehat{M^\Endo}}^{\Gamma_F,0} : Z_{\dualmeta{G}}^\Gred]^{-1}
  \end{gather*}
\end{lemma}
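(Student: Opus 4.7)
L'approche est entièrement parallèle à celle de \ref{prop:cinst}: on va développer $c^\mathrm{st}(t,L)$ en un produit d'indices entre sous-groupes de $Z_{\widehat{M^\Endo}}^{\Gamma_F}$, puis on utilisera Arthur Lemma 1.1 appliqué à la paire $(M^\Endo, G[s])$ joint avec la loi modulaire $A \cap Ba = (A \cap B)a$ (pour $a \subset A$) pour simplifier le résultat. Notons $c_2$ le membre de gauche. En injectant la définition \eqref{eqn:coef-meta} de $i_{M^\Endo}(\tilde{G},G[s])$ dans la formule définissant $c^\mathrm{st}(t,L)$, on obtient d'abord l'expression
\begin{equation*}
  c_2 = [Z_{\widehat{M^\Endo}}^{\Gamma_F} \cap Z_{\widehat{L^\epsilon}}^{\Gamma_F} : Z_{\widehat{G[s]}}^{\Gamma_F}]^{-1} \cdot [Z_{\widehat{M^\Endo}}^{\Gamma_F} : Z_{\dualmeta{M}}^\Gred] \cdot [Z_{\widehat{G[s]}}^{\Gamma_F} : Z_{\dualmeta{G}}^\Gred]^{-1}.
\end{equation*}
(On rappelle, via la description de $\widehat{M^\Endo}$ et $\widehat{G[s]}$, que $Z_{\widehat{M^\Endo}} = Z_{\widehat{M^\Endo}}^{\Gamma_F}$ et idem pour $G[s]$.)

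Comme $Z_{\widehat{G[s]}}^{\Gamma_F} \subset Z_{\widehat{L^\epsilon}}^{\Gamma_F}$ et $Z_{\dualmeta{G}}^\Gred \subset Z_{\widehat{G[s]}}^{\Gamma_F}$, on peut factoriser le premier facteur selon la chaîne $Z_{\dualmeta{G}}^\Gred \subset Z_{\widehat{G[s]}}^{\Gamma_F} \subset Z_{\widehat{M^\Endo}}^{\Gamma_F} \cap Z_{\widehat{L^\epsilon}}^{\Gamma_F}$, ce qui donne
\begin{equation*}
  c_2 = \frac{[Z_{\widehat{M^\Endo}}^{\Gamma_F} : Z_{\dualmeta{M}}^\Gred]}{[Z_{\widehat{M^\Endo}}^{\Gamma_F} \cap Z_{\widehat{L^\epsilon}}^{\Gamma_F} : Z_{\dualmeta{G}}^\Gred]}.
\end{equation*}

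La suite est purement calculatoire. D'une part, l'analogue de Arthur Lemma 1.1 appliqué à $(M^\Endo, G[s])$ fournit $Z_{\widehat{M^\Endo}}^{\Gamma_F} = Z_{\widehat{G[s]}}^{\Gamma_F} Z_{\widehat{M^\Endo}}^{\Gamma_F,0}$; or $Z_{\widehat{M^\Endo}}^{\Gamma_F,0} = Z_{\dualmeta{M}}^\Gred$ d'après la description explicite de $\widehat{M^\Endo}$. D'où
$[Z_{\widehat{M^\Endo}}^{\Gamma_F} : Z_{\dualmeta{M}}^\Gred] = [Z_{\widehat{G[s]}}^{\Gamma_F} : Z_{\widehat{G[s]}}^{\Gamma_F} \cap Z_{\dualmeta{M}}^\Gred]$. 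D'autre part, la loi modulaire --- appliquée grâce à l'inclusion $Z_{\widehat{G[s]}}^{\Gamma_F} \subset Z_{\widehat{L^\epsilon}}^{\Gamma_F}$ --- donne
\begin{equation*}
  Z_{\widehat{M^\Endo}}^{\Gamma_F} \cap Z_{\widehat{L^\epsilon}}^{\Gamma_F} = \bigl(Z_{\widehat{G[s]}}^{\Gamma_F} Z_{\dualmeta{M}}^\Gred\bigr) \cap Z_{\widehat{L^\epsilon}}^{\Gamma_F} = Z_{\widehat{G[s]}}^{\Gamma_F} \bigl(Z_{\dualmeta{M}}^\Gred \cap Z_{\widehat{L^\epsilon}}^{\Gamma_F}\bigr).
\end{equation*}
Puisque $Z_{\widehat{G[s]}}^{\Gamma_F} \cap Z_{\dualmeta{M}}^\Gred \subset Z_{\widehat{L^\epsilon}}^{\Gamma_F}$, le deuxième théorème d'isomorphisme livre
$[Z_{\widehat{M^\Endo}}^{\Gamma_F} \cap Z_{\widehat{L^\epsilon}}^{\Gamma_F} : Z_{\widehat{G[s]}}^{\Gamma_F}] = [Z_{\dualmeta{M}}^\Gred \cap Z_{\widehat{L^\epsilon}}^{\Gamma_F} : Z_{\widehat{G[s]}}^{\Gamma_F} \cap Z_{\dualmeta{M}}^\Gred]$, et en combinant avec $[Z_{\widehat{G[s]}}^{\Gamma_F} : Z_{\dualmeta{G}}^\Gred]$, on obtient
$[Z_{\widehat{M^\Endo}}^{\Gamma_F} \cap Z_{\widehat{L^\epsilon}}^{\Gamma_F} : Z_{\dualmeta{G}}^\Gred] = [Z_{\widehat{G[s]}}^{\Gamma_F} : Z_{\dualmeta{G}}^\Gred] \cdot [Z_{\dualmeta{M}}^\Gred \cap Z_{\widehat{L^\epsilon}}^{\Gamma_F} : Z_{\widehat{G[s]}}^{\Gamma_F} \cap Z_{\dualmeta{M}}^\Gred]$.

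En injectant ces deux calculs dans la dernière formule pour $c_2$, les facteurs $[Z_{\widehat{G[s]}}^{\Gamma_F} : Z_{\widehat{G[s]}}^{\Gamma_F} \cap Z_{\dualmeta{M}}^\Gred]$ se combinent proprement via la chaîne $Z_{\dualmeta{G}}^\Gred \subset Z_{\widehat{G[s]}}^{\Gamma_F} \cap Z_{\dualmeta{M}}^\Gred \subset Z_{\dualmeta{M}}^\Gred \cap Z_{\widehat{L^\epsilon}}^{\Gamma_F}$, et l'on aboutit à
$c_2 = [Z_{\widehat{L^\epsilon}}^{\Gamma_F} \cap Z_{\dualmeta{M}}^\Gred : Z_{\dualmeta{G}}^\Gred]^{-1}$, qui coïncide avec le membre de droite souhaité puisque $Z_{\dualmeta{M}}^\Gred = Z_{\widehat{M^\Endo}}^{\Gamma_F,0}$. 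La principale difficulté n'est pas conceptuelle mais notationnelle: il s'agit de naviguer correctement dans le diagramme \eqref{eqn:diagramme-LZ} pour contrôler les inclusions entre les divers centres, en notant bien la symétrie formelle (et la dualité) entre la paire $(\widehat{L[\bar{s}]},\hat{L})$ utilisée dans \ref{prop:cinst} et la paire $(\widehat{L^\epsilon},\widehat{G[s]})$ qui joue le rôle analogue ici.
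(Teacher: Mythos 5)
Your proof is correct, and it takes a genuinely different path from the paper. After the initial rewriting $c_2 = [Z_{\widehat{M^\Endo}}^{\Gamma_F} : Z_{\dualmeta{M}}^\Gred]\,[Z_{\widehat{M^\Endo}}^{\Gamma_F} \cap Z_{\widehat{L^\epsilon}}^{\Gamma_F} : Z_{\dualmeta{G}}^\Gred]^{-1}$, which both arguments share, the paper proves an intermediate index identity \eqref{eqn:cst-foo} by applying Arthur's Lemma~1.1 to the Levi pair $(M^\Endo_\epsilon, L^\epsilon)$ inside $\widehat{M^\Endo_\epsilon}$, and crucially invokes condition (E4) (non-vanishing of $d^{G[s]}_{M^\Endo_\epsilon}(M^\Endo,L^\epsilon)$, dualizing to $Z_{\widehat{M^\Endo_\epsilon}}^{\Gamma_F,0} = Z_{\widehat{M^\Endo}}^{\Gamma_F,0} Z_{\widehat{L^\epsilon}}^{\Gamma_F,0}$). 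You instead apply Arthur's Lemma~1.1 to the pair $(M^\Endo, G[s])$, obtaining directly $Z_{\widehat{M^\Endo}}^{\Gamma_F} = Z_{\widehat{G[s]}}^{\Gamma_F} Z_{\dualmeta{M}}^\Gred$, and then the modular law plus elementary index manipulations finish the job without ever invoking (E4). Your route is shorter and slightly more general (it holds whenever the quotient defining $c_2$ makes sense), and it makes transparent the formal parallel with the proof of \ref{prop:cinst} by swapping roles between the two Levi pairs in the diagram \eqref{eqn:diagramme-LZ}; the paper's route buys nothing extra here and mainly keeps the proof structurally identical to that of \ref{prop:cinst}. All the inclusions you use ($Z_{\widehat{G[s]}}^{\Gamma_F} \subset Z_{\widehat{L^\epsilon}}^{\Gamma_F}$, $Z_{\dualmeta{G}}^\Gred \subset Z_{\widehat{G[s]}}^{\Gamma_F} \cap Z_{\dualmeta{M}}^\Gred$) are visible in \eqref{eqn:diagramme-LZ}, so the argument is airtight.
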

\begin{proof}
  Notons $c_2$ le terme à gauche dans l'assertion. Déroulons les définitions: $c_2$ est égal à
  $$ [Z_{\widehat{M^\Endo}}^{\Gamma_F} \cap Z_{\widehat{L^\epsilon}}^{\Gamma_F} : Z_{\widehat{G[s]}}^{\Gamma_F}]^{-1} [Z_{\widehat{M^\Endo}}^{\Gamma_F} : Z_{\dualmeta{M}}^\Gred] [Z_{\widehat{G[s]}}^{\Gamma_F} : Z_{\dualmeta{G}}^\Gred]^{-1} = [Z_{\widehat{M^\Endo}}^{\Gamma_F} : Z_{\dualmeta{M}}^\Gred] [Z_{\widehat{M^\Endo}}^{\Gamma_F} \cap Z_{\widehat{L^\epsilon}}^{\Gamma_F} : Z_{\dualmeta{G}}^\Gred]^{-1} . $$

  On a $Z_{\widehat{M^\Endo}}^{\Gamma_F,0}= Z_{\dualmeta{M}}^\Gred$, donc
  \begin{align*}
    c_2 & = [Z_{\widehat{M^\Endo}}^{\Gamma_F} : Z_{\widehat{M^\Endo}}^{\Gamma_F,0}] [Z_{\widehat{M^\Endo}}^{\Gamma_F} \cap Z_{\widehat{L^\epsilon}}^{\Gamma_F} : Z_{\dualmeta{G}}^\Gred]^{-1} \\
    & = [Z_{\widehat{M^\Endo}}^{\Gamma_F} : Z_{\widehat{M^\Endo}}^{\Gamma_F,0}] [Z_{\widehat{M^\Endo}}^{\Gamma_F} \cap Z_{\widehat{L^\epsilon}}^{\Gamma_F} : Z_{\widehat{M^\Endo}}^{\Gamma_F,0} \cap Z_{\widehat{L^\epsilon}}^{\Gamma_F,0}]^{-1} \\
    & \cdot [Z_{\widehat{M^\Endo}}^{\Gamma_F,0} \cap Z_{\widehat{L^\epsilon}}^{\Gamma_F,0} : Z_{\dualmeta{G}}^\Gred]^{-1}.
  \end{align*}

  Montrons que
  \begin{gather}\label{eqn:cst-foo}
    [Z_{\widehat{M^\Endo}}^{\Gamma_F} : Z_{\widehat{M^\Endo}}^{\Gamma_F,0}] [Z_{\widehat{M^\Endo}}^{\Gamma_F} \cap Z_{\widehat{L^\epsilon}}^{\Gamma_F} : Z_{\widehat{M^\Endo}}^{\Gamma_F,0} \cap Z_{\widehat{L^\epsilon}}^{\Gamma_F,0}]^{-1} = [Z_{\widehat{M^\Endo}}^{\Gamma_F,0} \cap Z_{\widehat{L^\epsilon}}^{\Gamma_F} :  Z_{\widehat{M^\Endo}}^{\Gamma_F,0} \cap Z_{\widehat{L^\epsilon}}^{\Gamma_F,0}]^{-1} .
  \end{gather}

  On a
  \begin{align*}
    Z_{\widehat{M^\Endo_\epsilon}}^{\Gamma_F} & = Z_{\widehat{L^\epsilon}}^{\Gamma_F} Z_{\widehat{M^\Endo_\epsilon}}^{\Gamma_F,0}, \\
    Z_{\widehat{M^\Endo_\epsilon}}^{\Gamma_F,0} & = Z_{\widehat{M^\Endo}}^{\Gamma_F,0} Z_{\widehat{L^\epsilon}}^{\Gamma_F,0}
  \end{align*}
  où la dernière égalité résulte de l'hypothèse $d^{G[s]}_{M^\Endo_\epsilon}(M,L^\epsilon) \neq 0$. Par conséquent $Z_{\widehat{M^\Endo_\epsilon}}^{\Gamma_F} = Z_{\widehat{L^\epsilon}}^{\Gamma_F} Z_{\widehat{M^\Endo}}^{\Gamma_F,0}$, donc
  $$ Z_{\widehat{M^\Endo}}^{\Gamma_F} = Z_{\widehat{M^\Endo}}^{\Gamma_F} \cap Z_{\widehat{M^\Endo_\epsilon}}^{\Gamma_F} = Z_{\widehat{M^\Endo}}^{\Gamma_F} \cap \left( Z_{\widehat{L^\epsilon}}^{\Gamma_F} Z_{\widehat{M^\Endo}}^{\Gamma_F,0} \right) = \left( Z_{\widehat{M^\Endo}}^{\Gamma_F} \cap Z_{\widehat{L^\epsilon}}^{\Gamma_F} \right) Z_{\widehat{M^\Endo}}^{\Gamma_F,0}. $$

  D'où la suite exacte
  $$ 1 \to \frac{Z_{\widehat{L^\epsilon}}^{\Gamma_F} \cap Z_{\widehat{M^\Endo}}^{\Gamma_F,0}}{Z_{\widehat{L^\epsilon}}^{\Gamma_F,0} \cap Z_{\widehat{M^\Endo}}^{\Gamma_F,0}} \to \frac{Z_{\widehat{L^\epsilon}}^{\Gamma_F} \cap Z_{\widehat{M^\Endo}}^{\Gamma_F}}{Z_{\widehat{L^\epsilon}}^{\Gamma_F,0} \cap Z_{\widehat{M^\Endo}}^{\Gamma_F,0}} \to \frac{Z_{\widehat{M^\Endo}}^{\Gamma_F}}{Z_{\widehat{M^\Endo}}^{\Gamma_F,0}} \to 1. $$

  On déduit \eqref{eqn:cst-foo} de cette suite. En mettant \eqref{eqn:cst-foo} dans la dernière expression de $c_2$, on obtient l'égalité cherchée.
\end{proof}

\begin{proof}[Démonstration de \ref{prop:cinstcst}]
  Vu \ref{prop:dd}, \ref{prop:cinst} et \ref{prop:cst}, on a
  $$ c^\text{inst}(t,L) c^\text{st}(t,L)^{-1} = [Z_{\widehat{L^\epsilon}}^{\Gamma_F} \cap Z_{\widehat{M^\Endo}}^{\Gamma_F,0} : Z_{\widehat{L[\bar{s}]}}^{\Gamma_F} \cap Z_{\widehat{M^\Endo}}^{\Gamma_F,0}]. $$

  Posons
  \begin{align*}
    A & := Z_{\widehat{L^\epsilon}}^{\Gamma_F} \cap Z_{\widehat{M^\Endo}}^{\Gamma_F,0}, \\
    B & := Z_{\widehat{L[\bar{s}]}}^{\Gamma_F} \cap Z_{\widehat{M^\Endo}}^{\Gamma_F,0}, \\
    C & := Z_{\widehat{L[\bar{s}]}}^{\Gamma_F,0} = Z_{\widehat{L^\epsilon}}^{\Gamma_F,0}.
  \end{align*}

  Rappelons que $d^{G[s]}_{M^\Endo_\epsilon}(M^\Endo,L^\epsilon)$ entraîne que $Z_{\widetilde{M^\Endo_\epsilon}}^{\Gamma_F,0} = Z_{\widehat{M^\Endo}}^{\Gamma_F,0} C$. On a donc
  \begin{align*}
    Z_{\widehat{L^\epsilon}}^{\Gamma_F} \cap Z_{\widehat{M^\Endo_\epsilon}}^{\Gamma_F,0} &= Z_{\widehat{L^\epsilon}}^{\Gamma_F} \cap \left( Z_{\widehat{M^\Endo}}^{\Gamma_F,0} C \right) = \left( Z_{\widehat{L^\epsilon}}^{\Gamma_F} \cap Z_{\widehat{M^\Endo}}^{\Gamma_F,0}  \right) C = AC, \\
    Z_{\widehat{L[\bar{s}]}}^{\Gamma_F} \cap Z_{\widehat{M^\Endo_\epsilon}}^{\Gamma_F,0} &= Z_{\widehat{L[\bar{s}]}}^{\Gamma_F} \cap \left( Z_{\widehat{M^\Endo}}^{\Gamma_F,0} C \right) = \left( Z_{\widehat{L[\bar{s}]}}^{\Gamma_F} \cap Z_{\widehat{M^\Endo}}^{\Gamma_F,0}  \right) C = BC.
  \end{align*}

  D'après ce qui précède et l'inclusion $Z_{\widehat{L[\bar{s}]}}^{\Gamma_F} \hookrightarrow Z_{\widehat{L^\epsilon}}^{\Gamma_F}$,
  $$ A \cap BC = \left( Z_{\widehat{L^\epsilon}}^{\Gamma_F} \cap Z_{\widehat{M^\Endo}}^{\Gamma_F,0} \right) \cap \left( Z_{\widehat{L[\bar{s}]}}^{\Gamma_F} \cap Z_{\widehat{M^\Endo_\epsilon}}^{\Gamma_F,0} \right) = Z_{\widehat{L[\bar{s}]}}^{\Gamma_F} \cap Z_{\widehat{M^\Endo}}^{\Gamma_F,0} = B. $$

  Donc $A/B \simeq AC/BC$. Il en résulte que
  $$ c^\text{inst}(t,L) c^\text{st}(t,L)^{-1} = [A:B] = [AC:BC] = [Z_{\widehat{L^\epsilon}}^{\Gamma_F} \cap Z_{\widehat{M^\Endo_\epsilon}}^{\Gamma_F,0} : Z_{\widehat{L[\bar{s}]}}^{\Gamma_F} \cap Z_{\widehat{M^\Endo_\epsilon}}^{\Gamma_F,0}], $$
  ce qu'il fallait démontrer.
\end{proof}

\bibliographystyle{abbrv-fr}
\bibliography{endoscopie,tf}

\bigskip
\begin{flushleft}
Wen-Wei Li \\
Institut de Mathématiques de Jussieu \\
175 rue du Chevaleret, 75013 Paris  \\
France \\
Adresse électronique: \texttt{wenweili@math.jussieu.fr}
\end{flushleft}

\end{document}